\newif\ifprint\printtrue
\newtheorem{theorem}{Theorem}
\newtheorem{corollary}[theorem]{Corollary}
\newtheorem{lemma}[theorem]{Lemma}
{\theorembodyfont{\rmfamily}\newtheorem{example}[theorem]{Example}}
\newtheorem{proposition}[theorem]{Proposition}
{\theorembodyfont{\rmfamily}}
\newenvironment{proof}{\noindent\textit{Proof.\ }}{\hspace*{\fill}$\Box$\medskip}
\renewcommand{\geq}{\geqslant}
\renewcommand{\leq}{\leqslant}
\newcommand{\tsum}{\textstyle\sum\limits}
\newcommand{\tprod}{\textstyle\prod\limits}
\newcommand{\tint}{\textstyle\int\limits}
\DeclareFontFamily{U}{mathx}{\hyphenchar\font45}
\DeclareFontShape{U}{mathx}{m}{n}{
      <5> <6> <7> <8> <9> <10>
      <10.95> <12> <14.4> <17.28> <20.74> <24.88>
      mathx10
      }{}
\DeclareSymbolFont{mathx}{U}{mathx}{m}{n}
\DeclareMathSymbol{\bigtimes}{1}{mathx}{"91}
\newcommand{\vect}[1]{\underline{\boldsymbol{#1}}}
\newcommand{\raisemath}[1]{\mathpalette{\raisem@th{#1}}}
\newcommand{\raisem@th}[3]{\raisebox{#1}{$#2#3$}}
\newcommand{\dchi}{{\displaystyle\chi}}
\newcommand{\vectgamma}{\text{\setul{1.3pt}{.4pt}\ul{$\boldsymbol{\gamma}$}}}
\newcommand{\indexvectgamma}{\text{\setul{0.9pt}{.4pt}\ul{$\boldsymbol{\gamma}$}}}
\newcommand{\vecteta}{\text{\setul{1.3pt}{.4pt}\ul{$\boldsymbol{\eta}$}}}
\newcommand{\indexvecteta}{\text{\setul{0.9pt}{.4pt}\ul{$\boldsymbol{\eta}$}}}
\DeclareMathOperator{\pop}{p}
\newcommand{\p}{\!\rule{0.4pt}{0pt}\pop}
\DeclareMathOperator{\lcm}{lcm}
\DeclareMathOperator{\vspan}{span}
\DeclareMathOperator{\tmodop}{mod}
\newcommand{\tmod}{\,\tmodop}
\numberwithin{equation}{section}
\numberwithin{theorem}{section}
\numberwithin{figure}{section}
\DeclareMathOperator{\LCop}{LC}
\newcommand{\LC}{\vect{\LCop}}
\DeclareMathOperator{\Iop}{I}
\newcommand{\I}{\vect{\Iop}}
\DeclareMathOperator{\Jop}{J}
\newcommand{\J}{\vect{\Jop}}
\newcommand{\vectrho}{\text{\setul{1.5pt}{.4pt}\ul{$\boldsymbol{\rho}$}}}
\newcommand{\vectj}{{\underline{\boldsymbol{j}}}}
\newcommand{\oversim}[1]{\mathrlap{\stackrel{\boldsymbol{\sim}}{\smash{#1}\rule{0pt}{1.25ex}}}#1}
\newcommand{\vectGammacircvect}[1]{%
  \overset{\raise-4.5pt\hbox{\scriptsize\ensuremath{\circ}}}%
          {\vect{\Gamma}}^{\text{\raisebox{-3.5pt}{$(\vect{#1})$}}}}
\newcommand{\vectGammacircvectast}[1]{%
  \overset{\raise-4.5pt\hbox{\scriptsize\ensuremath{\circ}}}%
          {\vect{\Gamma}}^{\text{\raisebox{-3.5pt}{$(\vect{#1}),\ast$}}}}
\renewcommand{\theequation}{\thesection.\arabic{equation}}
\definecolor{mygray}{gray}{0.6}
\begin{document}

\setlength\abovedisplayskip      { 7pt }
\setlength\abovedisplayshortskip { 3pt }
\setlength\belowdisplayskip      { 7pt }
\setlength\belowdisplayshortskip { 3pt }

\title{A unifying theory for multivariate polynomial interpolation on general Lissajous-Chebyshev nodes}

\renewcommand{\thefootnote}{}

\author{\begin{tabular}{ll} Peter Dencker$^{\mathrm{1)}}$\setcounter{footnote}{-1}\footnote{$^{\mathrm{1)}}$Institut f\"ur Mathematik, 
Universit\"at zu L\"ubeck.}\quad & Wolfgang Erb$^{\mathrm{2)}}$\setcounter{footnote}{-1}\footnote{$^{\mathrm{2)}}$Department of Mathematics, University of Hawai`i at M\=anoa. }\\ 
\texttt{\normalsize dencker@math.uni-luebeck.de\quad} &  \texttt{\normalsize erb@math.hawaii.edu}
\end{tabular}}

\date{\ \\[2mm] \today}

\maketitle

\vspace*{-30em}
\begin{equation}\label{1505271126}\tag{$\uparrow$}\end{equation}

\vspace*{23em}

\thispagestyle{empty}

\enlargethispage{2em}

\setcounter{tocdepth}{2}
\tableofcontents

\thispagestyle{empty}

\newpage

\begin{abstract}
The goal of this article is to provide a general multivariate framework that synthesizes well-known non-tensorial 
polnomial interpolation schemes on the Padua points, the Morrow-Patterson-Xu points and 
the Lissajous node points into a single unified theory. The interpolation nodes of these schemes are special cases of the general 
Lissajous-Chebyshev points studied in this article. We will characterize these
Lissajous-Chebyshev points in terms of Lissajous curves and Chebyshev varieties and derive a general discrete orthogonality structure related to these points. 
This discrete orthogonality is used as the key for the proof of the uniqueness of the polynomial interpolation and the derivation of a quadrature rule on these node sets. 
Finally, we give an efficient scheme to compute the polynomial interpolants. 
\end{abstract}

\section{Introduction}

Univariate Chebyshev polynomials as well as its zeros and extremal points are among the most studied objects in interpolation and approximation theory. The reason
for this popularity lies in the large number of useful theoretical and computational properties of these polynomials. Chebyshev polynomials can easily be formulated and computed, 
they satisfy a simple orthogonality and three-term recurrence relation and they are intimately linked to trigonometric polynomials by a change of variables. Polynomial interpolation 
at the Chebyshev nodes or the Chebyshev-Gau{\ss}-Lobatto nodes is also easy to implement and can be carried out very efficiently by a discrete cosine transform.
Further, the absolute numerical condition for polynomial interpolation at these points is only growing logarithmically in the number of points.    

Many of these useful properties of the Chebyshev polynomials on the interval $[-1,1]$ can be carried over directly to the $\mathsf{d}$-dimensional hypercube $[-1,1]^{\mathsf{d}}$ by using
tensor products of univariate Chebyshev polynomials and tensor products of Chebyshev or Chebyshev-Gau{\ss}-Lobatto points as interpolation nodes. This is the standard approach
in most multivariate spectral methods and implemented, for instance, in the Chebfun 2 package \cite{TownsendTrefethen2013}.

\medskip

Non-tensorial constructions of multivariate Chebyshev node points are rare in the literature. In general, leaving the tensor-product setting causes a lot 
of difficulties in theoretical as well as in computational aspects. From the theoretical point of view it gets harder to find a suitable polynomial space in order 
that interpolation at the given node set is unique. Further, a lot of the nice computational properties of the tensor product case as for instance
the relation to the discrete cosine transform must be equilized in a more sophisticated way. An example of a non-tensorial construction on triangles with the derivation of an efficient 
computational scheme for the interpolating polynomials can be found in \cite{RylandMuntheKaas2010}. 

In the bivariate setting, there are two particular non-tensorial constructions in the literature in which the computation of the interpolating polynomial
is almost as easy as in the tensor-product case. The node points of the first construction are referred to as Morrow-Patterson-Xu points and were introduced in 
\cite{MorrowPatterson1978} for multivariate quadrature and in \cite{Xu1996} for bivariate Lagrange interpolation. The second set of nodes is referred to as Padua points and is
studied thouroghly in a series of papers \cite{BosDeMarchiVianelloXu2006,BosDeMarchiVianelloXu2007,CaliariDeMarchiVianello2005,CaliariDeMarchiVianello2008}.
In terms of numerical stability and convergence 
both interpolation schemes have similar properties as the interpolation scheme on a single tensor product set. 

A particular property of the Padua points is that they can be characterized as the boundary and the intersection points of a degenerate Lissajous curve. Based on 
this observation, the theory of the Padua points was recently extended in \cite{Erb2015,ErbKaethnerAhlborgBuzug2015,ErbKaethnerDenckerAhlborg2015} to bivariate and in 
\cite{DenckerErb2015a} to $\mathsf{d}$-variate Lissajous curves and its respective node sets. The interpolation polynomials of these $\mathsf{d}$-variate non-tensorial constructions
showed similar convergence properties as in the $\mathsf{d}$-variate tensor setting. In particular, it is shown in \cite{DenckerErbKolomoitsevLomako2016} that the Lebesgue constant, the 
absolute condition number for interpolation on the points in \cite{DenckerErb2015a} has asymptotically the same growth as the tensor product case. 

\medskip

The goal of this work is to develop a unified theory for multivariate polynomial interpolation theory that incorporates the non-tensorial schemes of the Padua points, 
the Morrow-Patterson-Xu points and the multivariate Lissajous nodes considered in \cite{DenckerErb2015a}. Further, this theory will provide exact
polynomial interpolation schemes for a series of point sets that are up to the moment only studied in 
terms of multivariate quadrature \cite{BojanovPetrova1997,Noskov91}, Chebyshev lattices \cite{CoolsPoppe2011,PoppeCools2013} or hyperinterpolation \cite{DeMarchiVianelloXu2007}. 

\medskip

The Lissajous-Chebyshev point sets $\LC^{(\vect{m})}_{\vect{\kappa}}$, the underlying sets of interpolation nodes for multivariate polynomial interpolation considered
in this work, will be introduced in Section~\ref{17008191031}. The particular name Lissajous-Chebyshev is motivated 
by the fact that the points $\LC^{(\vect{m})}_{\vect{\kappa}}$ have intimate 
relations with multivariate Chebyshev variaties and Lissajous curves. These relations will be worked out in Section~\ref{1708201701}.

\medskip

To prepare an appropriate definition of multivariate polynomial spaces for the interpolation on the node sets $\LC^{(\vect{m})}_{\vect{\kappa}}$, 
we will introduce and study  corresponding  spectral index sets in Section~\ref{17008191035}.
In Section~\ref{17008191036}, we will provide a discrete orthogonality structure related to the interpolation points which is the key element for the proof of the interpolation results
stated in Theorem \ref{1509082002} and Theorem \ref{201512131945} and the
quadrature formula on the node sets $\LC^{(\vect{m})}_{\vect{\kappa}}$ given in Theorem \ref{1509082004}.

\medskip

The main results of this work are stated as Theorem \ref{1509082002} and Theorem \ref{201512131945} in Section~\ref{1609011843}. In these two theorems we show that in 
the polynomial spaces based on the spectral index sets introduced in Section~\ref{17008191035} there exists a unique polynomial interpolant for given data values on the 
node points $\LC^{(\vect{m})}_{\vect{\kappa}}$. As a consequence of 
these results, we obtain in Section~\ref{17008191038} a simple numerical scheme for the computation of the interpolating polynomial. Finally, in Section~\ref{1609031100} we provide a series
of examples of particular Lissajous-Chebyshev nodes that link the results obtained in this paper to already well-known point sets like the Padua or the Morrow-Patterson-Xu points.

\section{General notation}
For a finite set $\mathsf{X}$, the number of elements of $\mathsf{X}$ is denoted by $\#\mathsf{X}$. For $\mathsf{x},\mathsf{y}\in\mathsf{X}$, 
we use the Kronecker delta symbol: $\delta_{\mathsf{x},\mathsf{y}}=1$ if $\mathsf{y}=\mathsf{x}$ and $\delta_{\mathsf{x},\mathsf{y}}=0$ otherwise.
Further, to each $\mathsf{x}\in\mathsf{X}$ we associate a Dirac delta function $\delta_{\mathsf{x}}$ on $\mathsf{X}$ 
defined by $\delta_{\mathsf{x}}(\mathsf{y})=\delta_{\mathsf{x},\mathsf{y}}$.
We denote by $\mathcal{L}(\mathsf{X})$ the vector space of all complex-valued functions defined on $\mathsf{X}$.
The delta functions  $\delta_{\mathsf{x}}$, $\mathsf{x}\in\mathsf{X}$, form a basis of the vector space $\mathcal{L}(\mathsf{X})$.

If $\nu$ is a measure defined on the power set $\mathcal{P}(\mathsf{X})$ of $\mathsf{X}$ and $\nu(\{\mathsf{x}\})>0$ for all $\mathsf{x}\in\mathsf{X}$, an inner product $\langle \,\cdot,\cdot\,\rangle_{\nu}$ for 
the vector space $\mathcal{L}(\mathsf{X})$ is defined by
\[\langle h_1,h_2\rangle_{\nu}=\tint h_1\overline{h}_2\,\mathrm{d}\nu.\]
 The norm corresponding to  $\langle \,\cdot,\cdot\,\rangle_{\nu}$  is denoted by  $\|\cdot\|_{\nu}$.

\medskip

For $\mathsf{d}\in\mathbb{N}$, the elements of 
$\mathbb{R}^{\mathsf{d}}$
are written as $\vect{x}=(x_1,\ldots,x_{\mathsf{d}})$. 
We use the abbreviations $\vect{0}$ and $\vect{1}$ for the $\mathsf{d}$-tuples for which all components are $0$ or $1$, respectively.

The  least common multiple in $\mathbb{N}$ of the elements of a finite set $M\subseteq \mathbb{N}$ is denoted by $\lcm M$, the greatest common divisor of the elements of a set $M\subseteq \mathbb{Z}$, $M\supsetneqq \{0\}$, is denoted by $\gcd M$. 
For $\vect{k}\in\mathbb{N}^{\mathsf{d}}$, we set 
\begin{equation}\label{1608311012} 
\p[\vect{k}]=\tprod_{\mathsf{j}=1}^{\mathsf{d}} k_{\mathsf{j}},\qquad \lcm[\vect{k}]=\lcm\{k_1,\ldots,k_{\mathsf{d}}\},
\end{equation}
and denote 

\vspace*{-2.9em}

\begin{equation}\label{1608311013} 
t^{(\vect{k})}_{l}=\dfrac{l\pi}{\lcm[\vect{k}]},\qquad l\in\mathbb{N}_0.
\end{equation}
Further, for   $k\in\mathbb{N}$, $i\in\mathbb{N}_0$, as well as  $\vect{k}\in\mathbb{N}^{\mathsf{d}}$, $\vect{i}\in\mathbb{N}_0^{\mathsf{d}}$,
we use the notation
\begin{equation}\label{201511121326}
\vect{z}^{(\vect{k})}_{\vect{i}}=\left(z^{(k_1)}_{i_1},\ldots,z^{(k_{\mathsf{d}})}_{i_{\mathsf{d}}}\right),\quad  z^{(k)}_{i}=\cos\left(i\pi/k\right), 
\end{equation}
for the so-called \textit{Chebyshev-Gauß-Lobatto} points.

The following well-known \textit{Chinese remainder theorem} will be used repeatedly in this manuscript. 
Let $\vect{k}\in \mathbb{N}^{\mathsf{d}}$ and $\vect{a}\in\mathbb{Z}^{\mathsf{d}}$. If
\begin{equation}\label{1511161925}
\forall\,\mathsf{i},\mathsf{j}\in\{1,\ldots,\mathsf{d}\}:\quad a_{\mathsf{j}}\equiv a_{\mathsf{i}}\mod \gcd\{k_{\mathsf{i}},k_{\mathsf{j}}\},
\end{equation}
then there exists an  unique $l\in\{\,0,\ldots,\lcm[\vect{k}]-1\,\}$ solving the following system of simultaneous congruences
\begin{equation}\label{1511161926}
\forall\,\mathsf{i}\in\{1,\ldots,\mathsf{d}\}:\quad l=a_{\mathsf{i}} \mod k_{\mathsf{i}}. 
\end{equation}
Note that \eqref{1511161925} is also a necessary condition for the existence of $l\in\mathbb{Z}$ 
satisfying  \eqref{1511161926}. 


\section{The node sets}
\label{17008191031}

 \begin{figure}[htb]
	\centering
	\subfigure[\hspace*{1em} The index set $\I^{(10,5)}_{(0,0)}$]{\includegraphics[scale=0.85]{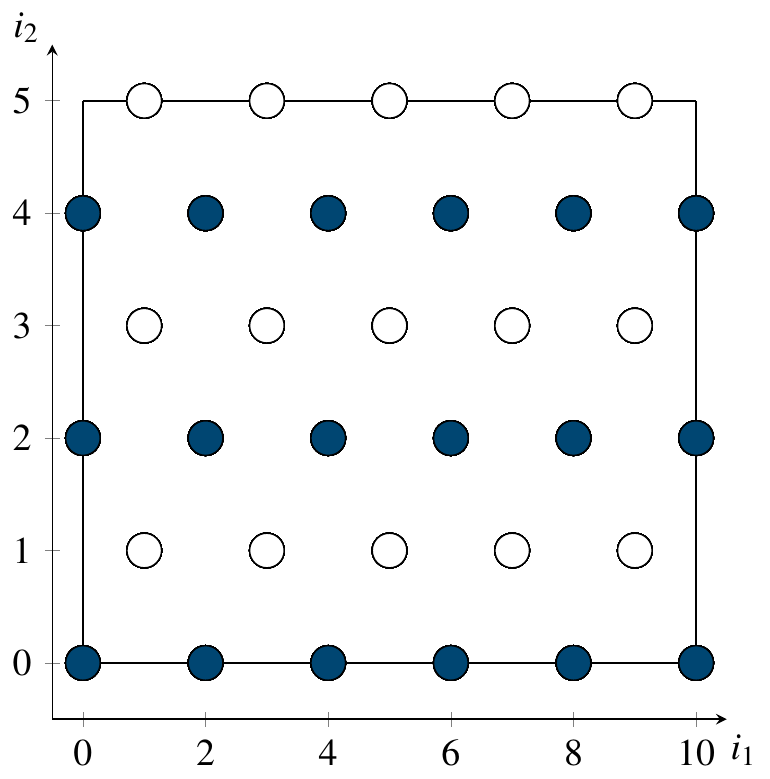}}
	\hfill	
	\subfigure[\hspace*{1em} $\LC^{(10,5)}_{(0,0)}$ and $\mathcal{C}^{(10,5)}_{(0,0)} = \displaystyle \!\! \bigcup_{\rho \in \{0,2,4\}} \!\! \vect{\ell}^{(10,5)}_{(0,\rho)}(\mathbb{R})$
	]{\includegraphics[scale=0.8]{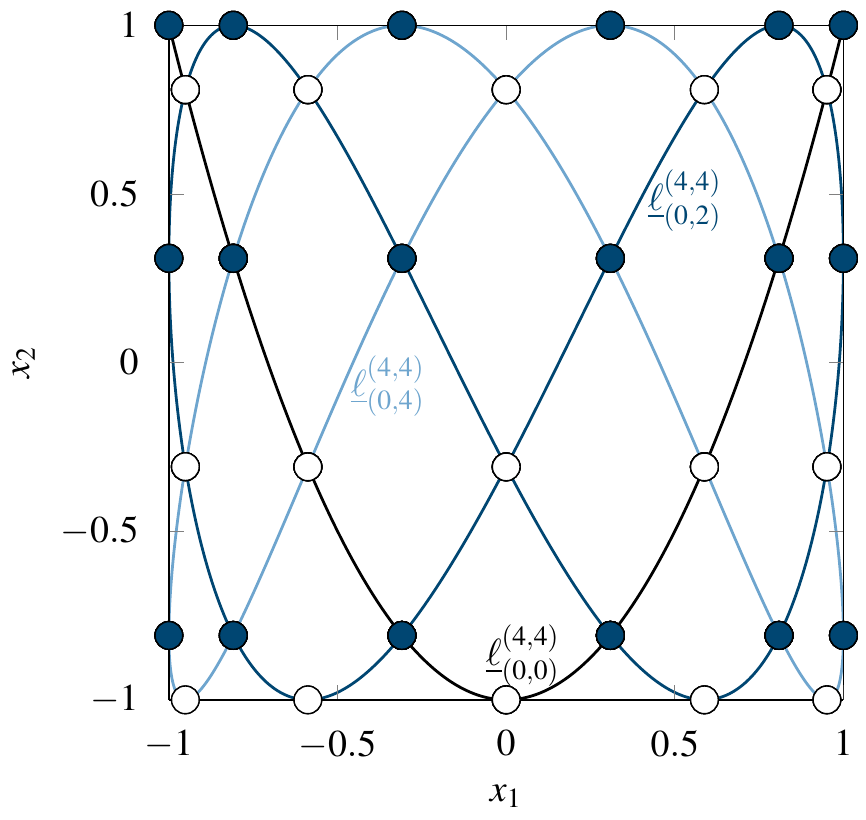}} 
  	\caption{Illustration of the index set $\I^{(10,5)}_{(0,0)}$ (left) and the Lissajous-Chebyshev node points $\LC^{(10,5)}_{(0,0)}$ (right). 
  	The subsets $\I^{(10,5)}_{(0,0),0}$, $\LC^{(10,5)}_{(0,0),0}$ and $\I^{(10,5)}_{(0,0),1}$, $\LC^{(10,5)}_{(0,0),1}$ are colored in blue and 
  	white, respectively. The generating Chebyshev variety $\mathcal{C}^{(10,5)}_{(0,0)}$ (right) can be written as the union of 
  	a degenerate Lissajous curve $\vect{\ell}^{(10,5)}_{(0,0)}(\mathbb{R})$ (black) with two non-degenerate Lissajous curves $\vect{\ell}^{(10,5)}_{(0,2)}(\mathbb{R})$
  	and $\vect{\ell}^{(10,5)}_{(0,4)}(\mathbb{R})$ (dark and light blue, respectively), see also Example \ref{ex:1} and Example \ref{1710311355}. 
  	} 
	\label{fig:doublem-1}
\end{figure}

In order to introduce the node sets for the multivariate polynomial interpolation under consideration, 
we define appropriate index sets $\I^{(\vect{m})}_{\vect{\kappa},0}$, $\I^{(\vect{m})}_{\vect{\kappa},1}$
and $\I^{(\vect{m})}_{\vect{\kappa}}$ that enable us to parametrize the node points. 
For $\vect{m}\in\mathbb{N}^{\mathsf{d}}$ and $\vect{\kappa}\in\mathbb{Z}^{\mathsf{d}}$, we define
\begin{equation}\label{1509221632}
\begin{split}
\I^{(\vect{m})}_{\vect{\kappa}}&=\I^{(\vect{m})}_{\vect{\kappa},0}\cup \I^{(\vect{m})}_{\vect{\kappa},1},\qquad  \text{with the sets $\I^{(\vect{m})}_{\vect{\kappa},\mathfrak{r}}$, $\mathfrak{r}\in\{0,1\}$, given by}\\[0.2em]
\I^{(\vect{m})}_{\vect{\kappa},\mathfrak{r}}&=\left\{\,\vect{i}\in\mathbb{N}_0^{\mathsf{d}}\,\left|\,\forall\,\mathsf{i}:\ 0\leq i_{\mathsf{i}}\leq m_{\mathsf{i}}\ 
\text{and}\ \ i_{\mathsf{i}}\equiv \kappa_{\mathsf{i}}-\mathfrak{r} \tmod 2\right.\,\right\}.
\end{split}
\end{equation}
Further, for $\mathsf{M}\subseteq\{1,\ldots,\mathsf{d}\}$, we introduce the subsets
\begin{equation}\label{1609080206}
\I^{(\vect{m})}_{\vect{\kappa},\mathsf{M}}=\I^{(\vect{m})}_{\vect{\kappa},\mathsf{M},0}\cup \I^{(\vect{m})}_{\vect{\kappa},\mathsf{M},1},\qquad 
\I^{(\vect{m})}_{\vect{\kappa},\mathsf{M},\mathfrak{r}}=
\left\{\,\left.\vect{i}\in \I^{(\vect{m})}_{\vect{\kappa},\mathfrak{r}}\,\right|\,0<i_{\mathsf{i}}<m_{\mathsf{i}}\Leftrightarrow\mathsf{i}\in \mathsf{M}\,\right\}.
\end{equation}
A typical example of these sets for dimension $\mathsf{d} =2$ is illustrated in Figure~\ref{fig:doublem-1}~(a). 
From the cross product structure of the sets $\I^{(\vect{m})}_{\vect{\kappa},0}$ and $\I^{(\vect{m})}_{\vect{\kappa},1}$, we can easily derive 
\begin{equation}\label{1609080404}
\# \I^{(\vect{m})}_{\vect{\kappa},\mathfrak{r}} = \tprod_{\substack{\mathsf{i}\in \{1,\ldots,\mathsf{d}\}\\ m_{\mathsf{i}}\equiv 0\tmod 2\\\kappa_{\mathsf{i}}\equiv \mathfrak{r}\tmod 2}}\text{\raisebox{-0.5em}{$\dfrac{m_{\mathsf{i}}+2}{2}$}}\quad \times\ \tprod_{\substack{\mathsf{i}\in \{1,\ldots,\mathsf{d}\}\\ m_{\mathsf{i}}\equiv 0\tmod 2\\\kappa_{\mathsf{i}}\not\equiv \mathfrak{r}\tmod 2}}\text{\raisebox{-0.5em}{$\dfrac{m_{\mathsf{i}}}2$}}\quad \times\tprod_{\substack{\mathsf{i}\in \{1,\ldots,\mathsf{d}\}\\ m_{\mathsf{i}}\equiv 1\tmod 2}}\text{\raisebox{-0.5em}{$\dfrac{m_{\mathsf{i}}+1}{2}$}}.
\end{equation}
It is easy to check that the value \eqref{1609080404} of $\I^{(\vect{m})}_{\vect{\kappa},\mathfrak{r}}$ can also be written as
\begin{equation}\label{1509221434}
\#\I^{(\vect{m})}_{\vect{\kappa},\mathfrak{r}}=\dfrac1{2^{\mathsf{d}}}\tprod_{\substack{\mathfrak{s}\in \{0,1\}\\\mathfrak{t}\in \{0,1\}}}\tprod_{\substack{\mathsf{i}\in \{1,\ldots,\mathsf{d}\}\\ m_{\mathsf{i}}\equiv \mathfrak{s}\tmod 2\\\kappa_{\mathsf{i}}-\mathfrak{r}\equiv \mathfrak{t}\tmod 2}}(m_{\mathsf{i}}+2-\mathfrak{s}-2\mathfrak{t}+2\mathfrak{s}\mathfrak{t}).
\end{equation}
Note that in  \eqref{1609080404}, \eqref{1509221434} the product over the empty set is as usual considered to be~$1$.

Since $\I^{(\vect{m})}_{\vect{\kappa},0}$ and  $\I^{(\vect{m})}_{\vect{\kappa},1}$ are disjoint sets, we can sum the values in \eqref{1509221434} to obtain 
\begin{equation} \label{1509221435} \#\I^{(\vect{m})}_{\vect{\kappa}}=\#\I^{(\vect{m})}_{\vect{\kappa},0}+\#\I^{(\vect{m})}_{\vect{\kappa},1}.\end{equation}
In the same way we get for the subsets defined in \eqref{1609080206} with $\mathsf{M}\subseteq\{1,\ldots,\mathsf{d}\}$ that
\newlength{\arraycolsepsave}
\setlength{\arraycolsepsave}{\arraycolsep}
\setlength{\arraycolsep}{1pt}
\begin{equation}\label{1509221441}
\#\I^{(\vect{m})}_{\vect{\kappa},\mathsf{M},\mathfrak{r}}=\dfrac1{2^{\# \mathsf{M}}}\tprod_{\substack{\mathfrak{s}\in \{0,1\}\\\mathfrak{t}\in \{0,1\}}}\tprod_{\substack{\mathsf{i}\in \{1,\ldots,\mathsf{d}\}\\ m_{\mathsf{i}}\equiv \mathfrak{s}\tmod 2\\\kappa_{\mathsf{i}}-\mathfrak{r}\equiv \mathfrak{t}\tmod 2}}\text{\raisebox{-0.3em}{$\displaystyle\left\{ \begin{array}{cccl} 
m_{\mathsf{i}}&-&2+\mathfrak{s}+2\mathfrak{t}-2\mathfrak{s}\mathfrak{t} \; &\quad \text{if}\ \mathsf{i}\in \mathsf{M},\\
&&2-\mathfrak{s}-2\mathfrak{t}+2\mathfrak{s}\mathfrak{t} \; &  \quad \text{if}\ \mathsf{i}\not\in \mathsf{M}.
\end{array}\right.$}}
\end{equation}
\setlength{\arraycolsep}{\arraycolsepsave}
Also here, we have 
\begin{equation} \label{1708240203}\#\I^{(\vect{m})}_{\vect{\kappa},\mathsf{M}}=\#\I^{(\vect{m})}_{\vect{\kappa},\mathsf{M},0}+\#\I^{(\vect{m})}_{\vect{\kappa},\mathsf{M},1}.
\end{equation}

\medskip

Now, using the index sets $\I^{(\vect{m})}_{\vect{\kappa},0}$, $\I^{(\vect{m})}_{\vect{\kappa},1}$ and $\I^{(\vect{m})}_{\vect{\kappa}}$, we introduce the node sets for the 
multivariate polynomial interpolation problem considered in this work as
\begin{equation} \label{eq:LC-pointsdefinition} \LC^{(\vect{m})}_{\vect{\kappa}} = \LC^{(\vect{m})}_{\vect{\kappa},0}\cup \LC^{(\vect{m})}_{\vect{\kappa},1},\quad \text{where} \quad
\LC^{(\vect{m})}_{\vect{\kappa},\mathfrak{r}}=\left\{\, \vect{z}^{(\vect{m})}_{\vect{i}}\,\left|\,\vect{i}\in \I^{(\vect{m})}_{\vect{\kappa},\mathfrak{r}} \right.\right\}.
\end{equation}
Here, the bijective mapping $\vect{i}\mapsto \vect{z}^{(\vect{m})}_{\vect{i}}$ from the index sets $\I^{(\vect{m})}_{\vect{\kappa},\mathfrak{r}}$ 
onto the node sets $\LC^{(\vect{m})}_{\vect{\kappa},\mathfrak{r}}$ is carried out with help of the Chebyshev-Gauß-Lobatto points $\vect{z}^{(\vect{m})}_{\vect{i}}$ 
given in~\eqref{201511121326}. Since the sets $\LC^{(\vect{m})}_{\vect{\kappa},0}$ and $\LC^{(\vect{m})}_{\vect{\kappa},1}$ are disjoint, this mapping is in particular
a bijection from  $\I^{(\vect{m})}_{\vect{\kappa}}$ onto $\LC^{(\vect{m})}_{\vect{\kappa}}$. 
Due to the intimate connection of the sets $\LC^{(\vect{m})}_{\vect{\kappa}}$ in~\eqref{eq:LC-pointsdefinition} to multivariate Lissajous curves and Chebyshev varieties, 
we call the elements of $\LC^{(\vect{m})}_{\vect{\kappa}}$ \textit{Lissajous-Chebyshev} points. 
A bivariate example is given in Figure~\ref{fig:doublem-1}~(b).

\medskip

According to the subsets $\I^{(\vect{m})}_{\vect{\kappa},\mathsf{M},\mathfrak{r}} \subseteq \I^{(\vect{m})}_{\vect{\kappa},\mathfrak{r}}$, $\mathsf{M}\subseteq\{1,\ldots,\mathsf{d}\}$, 
we also define
\[\LC^{(\vect{m})}_{\vect{\kappa},\mathsf{M}} =\LC^{(\vect{m})}_{\vect{\kappa},\mathsf{M},0}\cup\LC^{(\vect{m})}_{\vect{\kappa},\mathsf{M},1},\quad \LC^{(\vect{m})}_{\vect{\kappa},\mathsf{M},\mathfrak{r}}=\left\{\, \vect{z}^{(\vect{m})}_{\vect{\kappa},\vect{i}}\,\left|\,\vect{i}\in \I^{(\vect{m})}_{\vect{\kappa},\mathsf{M},\mathfrak{r}} \right.\right\}.\]
Then, we obviously have
\[\LC^{(\vect{m})}_{\vect{\kappa},\mathsf{M}}=\LC^{(\vect{m})}_{\vect{\kappa}}\cap \vect{F}^{\mathsf{d}}_{\mathsf{M}},\qquad 
\LC^{(\vect{m})}_{\vect{\kappa},\mathsf{M},\mathfrak{r}}=\LC^{(\vect{m})}_{\vect{\kappa},\mathfrak{r}}\cap \vect{F}^{\mathsf{d}}_{\mathsf{M}},\]
where
\[
\vect{F}^{\mathsf{d}}_{\mathsf{M}}=\left\{\,\left.\vect{x}\in [-1,1]^{\mathsf{d}}\,\right|\, x_{\mathsf{i}}\in (-1,1) \Leftrightarrow\mathsf{i}\in \mathsf{M} \,\right\}
\]
denotes an assembly of $\# \mathsf{M}$-faces of the hypercube $[-1,1]^{\mathsf{d}}$ which is related to the definition of the subsets $\I^{(\vect{m})}_{\vect{\kappa},\mathsf{M},\mathfrak{r}}$ given in
\eqref{1609080206}. In the same way as above, we get 
\[\#\LC^{(\vect{m})}_{\vect{\kappa}}=\#\I^{(\vect{m})}_{\vect{\kappa}},\qquad \#\LC^{(\vect{m})}_{\vect{\kappa},\mathfrak{r}}=\#\I^{(\vect{m})}_{\vect{\kappa},\mathfrak{r}},\] and 
\[\#\LC^{(\vect{m})}_{\vect{\kappa},\mathsf{M}}=\#\I^{(\vect{m})}_{\vect{\kappa},\mathsf{M}},\qquad \#\LC^{(\vect{m})}_{\vect{\kappa},\mathsf{M},\mathfrak{r}}=\#\I^{(\vect{m})}_{\vect{\kappa},\mathsf{M},\mathfrak{r}},\]
with the explicit values \eqref{1509221434}, \eqref{1509221435} and \eqref{1509221441}, \eqref{1708240203} for the cardinalities. 

\medskip

In this article, we make use of a component-wise multiplicative decomposition of the parameter vector $\vect{m}$. For this decomposition we use the notation \eqref{1608311012}.

\begin{proposition} \label{1509061252} Let $\vect{m}\in\mathbb{N}^{\mathsf{d}}$. There exist (not necessarily  uniquely determined) integer vectors  $\vect{m}^{\sharp},\vect{m}^{\flat}\in\mathbb{N}^{\mathsf{d}}$ such that the following properties are satisfied:
\stepcounter{equation}
\begin{align}
& \label{1509091200A}\text{For all $\mathsf{i}\in\{1,\ldots,\mathsf{d}\}${\rm :} $m_{\mathsf{i}}=m^{\flat}_{\mathsf{i}}m^{\sharp}_{\mathsf{i}}.$}\tag{\theequation a}\\
&\label{1509091200B}\text{For all $\mathsf{i}\in\{1,\ldots,\mathsf{d}\}${\rm :} $m^{\flat}_{\mathsf{i}}$ and $m^{\sharp}_{\mathsf{i}}$ are relatively prime.}\tag{\theequation b}\\
&\label{1509091200C}\text{The numbers  $m^{\sharp}_1,\ldots,m^{\sharp}_{\mathsf{d}}$ are pairwise relatively prime.}\tag{\theequation c}\\
&\label{1509091200D}\text{We have $\lcm[\vect{m}]=\p[\vect{m}^{\sharp}]$.}\tag{\theequation d}
\end{align}
\end{proposition}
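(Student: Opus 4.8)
The plan is to argue prime by prime via $p$-adic valuations. Write $v_p(n)$ for the exponent of a prime $p$ in $n\in\mathbb{N}$, so that $v_p(\lcm[\vect{m}])=\max_{\mathsf{i}}v_p(m_{\mathsf{i}})$ and $v_p(\p[\vect{k}])=\sum_{\mathsf{j}=1}^{\mathsf{d}}v_p(k_{\mathsf{j}})$ for any $\vect{k}\in\mathbb{N}^{\mathsf{d}}$. Only the finitely many primes dividing $\p[\vect{m}]$ are relevant. For each such prime $p$, I would fix an index $\mathsf{i}(p)\in\{1,\ldots,\mathsf{d}\}$ at which the maximum $\max_{\mathsf{i}}v_p(m_{\mathsf{i}})$ is attained, breaking ties arbitrarily.

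Next I would define $\vect{m}^{\sharp}$ by prescribing its valuations: put $v_p(m^{\sharp}_{\mathsf{i}(p)})=v_p(m_{\mathsf{i}(p)})$ and $v_p(m^{\sharp}_{\mathsf{j}})=0$ for $\mathsf{j}\neq\mathsf{i}(p)$, for every relevant prime $p$; equivalently $m^{\sharp}_{\mathsf{i}}=\prod_{p:\,\mathsf{i}(p)=\mathsf{i}}p^{v_p(m_{\mathsf{i}})}$, with the empty product read as $1$. Since $v_p(m^{\sharp}_{\mathsf{i}})\leq v_p(m_{\mathsf{i}})$ for all $p$ and all $\mathsf{i}$, the quotient $m^{\flat}_{\mathsf{i}}:=m_{\mathsf{i}}/m^{\sharp}_{\mathsf{i}}$ is a positive integer, so \eqref{1509091200A} holds by construction.

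The verification of \eqref{1509091200B}--\eqref{1509091200D} is then a routine check on valuations. For fixed $\mathsf{i}$ and $p$: either $\mathsf{i}=\mathsf{i}(p)$, in which case $v_p(m^{\flat}_{\mathsf{i}})=v_p(m_{\mathsf{i}})-v_p(m^{\sharp}_{\mathsf{i}})=0$, or $\mathsf{i}\neq\mathsf{i}(p)$, in which case $v_p(m^{\sharp}_{\mathsf{i}})=0$; so $\min\{v_p(m^{\sharp}_{\mathsf{i}}),v_p(m^{\flat}_{\mathsf{i}})\}=0$ for every $p$, which is \eqref{1509091200B}. For \eqref{1509091200C}, a prime $p$ divides $m^{\sharp}_{\mathsf{j}}$ only when $\mathsf{j}=\mathsf{i}(p)$, hence distinct $m^{\sharp}_{\mathsf{j}}$ and $m^{\sharp}_{\mathsf{k}}$ share no prime factor and are relatively prime. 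For \eqref{1509091200D}, $v_p(\p[\vect{m}^{\sharp}])=\sum_{\mathsf{j}}v_p(m^{\sharp}_{\mathsf{j}})=v_p(m^{\sharp}_{\mathsf{i}(p)})=v_p(m_{\mathsf{i}(p)})=\max_{\mathsf{i}}v_p(m_{\mathsf{i}})=v_p(\lcm[\vect{m}])$ for every prime $p$, so the two numbers agree.

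I do not expect a genuine obstacle here; the one point to highlight is that the choice of $\mathsf{i}(p)$ is exactly what decouples the prime $p$ across the indices (each full prime power of $\vect{m}$ is pushed into a single $m^{\sharp}_{\mathsf{i}}$), and the freedom in that choice when a maximum is attained at several indices is precisely the source of the asserted non-uniqueness of $\vect{m}^{\sharp},\vect{m}^{\flat}$.
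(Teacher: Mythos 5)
Your proof is correct and takes essentially the same approach as the paper: for each prime dividing $\lcm[\vect{m}]$ one selects an index attaining the maximal exponent and assigns that full prime power to the corresponding component of $\vect{m}^{\sharp}$, with $\vect{m}^{\flat}$ the componentwise quotient. The paper merely phrases this as a recursion over the primes rather than a closed-form product, so the two arguments are the same in substance.
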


We remark that these conditions are not independent of each other. Indeed, under the assumption \eqref{1509091200A} and \eqref{1509091200C}, we  have \eqref{1509091200D} if and only if we have \eqref{1509091200B}.

\medskip

\begin{proof} We denote   the exponent of a prime $p$ in  the prime factorization of $m_{\mathsf{i}}$ and  $\lcm[\vect{m}]$ by 
$e_{\mathsf{i}}(p)$ and $e(p)$, respectively. Let $p_1,\ldots,p_n$ be pairwise different and containing all 
primes that divide $\lcm[\vect{m}]$. We consider a recursive construction. Let $\vect{m}^{\sharp}_0=\vect{m}^{\flat}_0=\vect{1}$ and suppose 
we have already chosen $\vect{m}^{\sharp}_{s-1},\vect{m}^{\flat}_{s-1}$ based on powers of the primes $p_1, \ldots, p_{s-1}$. There is a  (not necessarily  uniquely determined) $\mathsf{j}$ 
such that $e_{\mathsf{j}}(p_{s})=e(p_{s})$. We set
$m^{\sharp}_{s,\mathsf{j}}=p_{s}^{e_{\mathsf{j}}(p_{s})}m^{\sharp}_{s-1,\mathsf{j}}$ and 
$m^{\sharp}_{s,\mathsf{i}}=m^{\sharp}_{s-1,\mathsf{i}}$ for $\mathsf{i}\neq \mathsf{j}$ as well as 
$m^{\flat}_{s,\mathsf{j}}=m^{\flat}_{s-1,\mathsf{j}}$ and $m^{\flat}_{s,\mathsf{i}}=p_{s}^{e_{\mathsf{i}}(p_{s})}m^{\flat}_{s-1,\mathsf{i}}$ 
for $\mathsf{i}\neq \mathsf{j}$. This yields $\vect{m}^{\sharp}_{s},\vect{m}^{\flat}_{s}$ from $\vect{m}^{\sharp}_{s-1},\vect{m}^{\flat}_{s-1}$. 
Then, after $n$ steps, $\vect{m}^{\sharp}=\vect{m}^{\sharp}_n$ and $\vect{m}^{\flat}=\vect{m}^{\flat}_n$ have the asserted properties.
\end{proof}

\medskip

For $\vect{m}^{\sharp},\vect{m}^{\flat}\in\mathbb{N}^{\mathsf{d}}$, we define the sets 
\begin{equation}\label{1708242016} 
H^{(\vect{m}^{\sharp})}=\{0,\ldots,2\p[\vect{m}^{\sharp}]-1\}, \qquad
 \vect{R}^{(\vect{m}^{\flat})}= \bigtimes_{\substack{\vspace{-8pt}\\\mathsf{i}=1}}^{\substack{\mathsf{d}\\\vspace{-10pt}}} \{0,\ldots,m^{\flat}_{\mathsf{i}}-1\}.
\end{equation}
In Proposition~\ref{1509221521} we will obtain an
identification of the  elements of $\I^{(\vect{m})}_{\vect{\kappa}}$ with the elements of a class decomposition of the set 
$H^{(\vect{m}^{\sharp})}\times \vect{R}^{(\vect{m}^{\flat})}$.  The particular importance of this statement will be seen in Section \ref{1708201701} 
for the further characterization of the point sets $\LC^{(\vect{m})}_{\vect{\kappa}}$ and in Section \ref{17008191036} for the 
derivation of a discrete orthogonality structure. 
\begin{lemma} \label{1708311652}
Let  $\vect{m},\vect{m}^{\sharp},\vect{m}^{\flat}\in\mathbb{N}^{\mathsf{d}}$  satisfy \eqref{1509091200A}, 
\eqref{1509091200B}, \eqref{1509091200C} and \eqref{1509091200D}.

\medskip

If $\vectrho\in\mathbb{Z}^{\mathsf{d}}$ and if there is an integer $k\in\mathbb{Z}$ such that
\[\forall\,\mathsf{i} \in\{1,\ldots,\mathsf{d}\}:\quad k\equiv \rho_{\mathsf{i}}m^{\sharp}_{\mathsf{i}}\mod m_{\mathsf{i}},\]

then 
\[\forall\,\mathsf{i} \in\{1,\ldots,\mathsf{d}\}:\quad \rho_{\mathsf{i}}\equiv 0\mod m^{\flat}_{\mathsf{i}}.\]
\end{lemma}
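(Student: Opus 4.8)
The plan is to fix an index $\mathsf{i}$ and show $m^{\flat}_{\mathsf{i}}\mid\rho_{\mathsf{i}}$ by reducing the hypothesis modulo $m^{\flat}_{\mathsf{i}}$. From $k\equiv\rho_{\mathsf{i}}m^{\sharp}_{\mathsf{i}}\pmod{m_{\mathsf{i}}}$ and $m^{\flat}_{\mathsf{i}}\mid m_{\mathsf{i}}$ (by \eqref{1509091200A}), we immediately get $k\equiv\rho_{\mathsf{i}}m^{\sharp}_{\mathsf{i}}\pmod{m^{\flat}_{\mathsf{i}}}$. Since $m^{\flat}_{\mathsf{i}}$ and $m^{\sharp}_{\mathsf{i}}$ are relatively prime by \eqref{1509091200B}, $m^{\sharp}_{\mathsf{i}}$ is invertible modulo $m^{\flat}_{\mathsf{i}}$, so it suffices to show $k\equiv 0\pmod{m^{\flat}_{\mathsf{i}}}$; then multiplying by the inverse of $m^{\sharp}_{\mathsf{i}}$ gives $\rho_{\mathsf{i}}\equiv 0\pmod{m^{\flat}_{\mathsf{i}}}$.

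So the real content is: \emph{$k$ is divisible by every $m^{\flat}_{\mathsf{i}}$}. I would argue prime by prime. Let $p$ be a prime and let $f_{\mathsf{i}}$ denote the exponent of $p$ in $m_{\mathsf{i}}$. By \eqref{1509091200D} together with \eqref{1509091200A} and \eqref{1509091200C}, for each prime $p$ there is (at least) one index $\mathsf{j}=\mathsf{j}(p)$ with the property that the full $p$-power $p^{f_{\mathsf{j}}}$ lands in $m^{\sharp}_{\mathsf{j}}$ (equivalently $p\nmid m^{\flat}_{\mathsf{j}}$), while for every other index $\mathsf{i}\neq\mathsf{j}$ the $p$-power $p^{f_{\mathsf{i}}}$ lands entirely in $m^{\flat}_{\mathsf{i}}$, i.e.\ $p\nmid m^{\sharp}_{\mathsf{i}}$. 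This is exactly the distribution of prime powers produced by the construction in the proof of Proposition~\ref{1509061252}, but it also follows abstractly: by \eqref{1509091200C} the numbers $m^{\sharp}_1,\dots,m^{\sharp}_{\mathsf{d}}$ are pairwise coprime, so $p$ divides at most one of them; and if $p\mid\lcm[\vect m]=\p[\vect m^{\sharp}]$ it must divide exactly one of them, say $m^{\sharp}_{\mathsf{j}}$, and then $p^{f_{\mathsf{j}}}\mid m^{\sharp}_{\mathsf{j}}$ because $v_p(\p[\vect m^{\sharp}])=v_p(m^{\sharp}_{\mathsf{j}})=v_p(\lcm[\vect m])=\max_{\mathsf{i}}f_{\mathsf{i}}\geq f_{\mathsf{j}}$, while $v_p(m^{\sharp}_{\mathsf{j}})\leq v_p(m_{\mathsf{j}})=f_{\mathsf{j}}$ forces equality.

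With this in hand I would bound $v_p(k)$. Applying the hypothesis at the special index $\mathsf{j}=\mathsf{j}(p)$ gives $k\equiv\rho_{\mathsf{j}}m^{\sharp}_{\mathsf{j}}\pmod{m_{\mathsf{j}}}$, hence $k\equiv\rho_{\mathsf{j}}m^{\sharp}_{\mathsf{j}}\pmod{p^{f_{\mathsf{j}}}}$. Since $p^{f_{\mathsf{j}}}\mid m^{\sharp}_{\mathsf{j}}$ we get $k\equiv 0\pmod{p^{f_{\mathsf{j}}}}$, i.e.\ $v_p(k)\geq f_{\mathsf{j}}=\max_{\mathsf{i}}f_{\mathsf{i}}=v_p(\lcm[\vect m])$. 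In particular $v_p(k)\geq v_p(m^{\flat}_{\mathsf{i}})$ for every $\mathsf{i}$, because $m^{\flat}_{\mathsf{i}}\mid m_{\mathsf{i}}\mid\lcm[\vect m]$. As this holds for all primes $p$ (for primes not dividing $\lcm[\vect m]$ the conclusion is vacuous since then $p\nmid m^{\flat}_{\mathsf{i}}$ either), we conclude $m^{\flat}_{\mathsf{i}}\mid k$ for every $\mathsf{i}$. Combining with the first paragraph finishes the proof.

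The only mildly delicate point — and the step I expect to need the most care in writing up — is justifying the prime-power bookkeeping: namely that \eqref{1509091200A}–\eqref{1509091200D} force, for each prime $p\mid\lcm[\vect m]$, the existence of an index $\mathsf{j}$ with $p^{f_{\mathsf{j}}}\mid m^{\sharp}_{\mathsf{j}}$ and $f_{\mathsf{j}}=\max_{\mathsf{i}}v_p(m_{\mathsf{i}})$. Everything else is a one-line coprimality/invertibility argument. One can either invoke the explicit construction in Proposition~\ref{1509061252} or give the short valuation argument sketched above; I would prefer the latter so the lemma reads as a consequence of the stated properties rather than of a particular construction.
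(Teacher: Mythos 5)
Your proof is correct and follows essentially the same route as the paper's: both arguments hinge on the same prime-power bookkeeping fact that for each prime $p$ there is an index $\mathsf{j}$ with $p^{e_{\mathsf{j}}(p)}\mid m^{\sharp}_{\mathsf{j}}$ and $e_{\mathsf{j}}(p)=\max_{\mathsf{i}}e_{\mathsf{i}}(p)$, which is exactly the step the paper extracts from \eqref{1509091200A}, \eqref{1509091200C}, \eqref{1509091200D}. The only organizational difference is that you first establish $m^{\flat}_{\mathsf{i}}\mid k$ and then divide by the unit $m^{\sharp}_{\mathsf{i}}$ modulo $m^{\flat}_{\mathsf{i}}$, whereas the paper eliminates $k$ to obtain $\rho_{\mathsf{i}}m^{\sharp}_{\mathsf{i}}\equiv\rho_{\mathsf{j}}m^{\sharp}_{\mathsf{j}}\pmod{\gcd\{m_{\mathsf{i}},m_{\mathsf{j}}\}}$ and concludes $p^{e_{\mathsf{i}}(p)}\mid\rho_{\mathsf{i}}$ directly.
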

\begin{proof}  Let $e_{\mathsf{j}}(p)$ be the exponent of a prime $p$ in the prime decomposition of  $m_{\mathsf{j}}$.  The assumption implies $\rho_{\mathsf{i}} m^{\sharp}_{\mathsf{i}}\equiv\rho_{\mathsf{j}} m^{\sharp}_{\mathsf{j}}\tmod \gcd\{m_{\mathsf{i}},m_{\mathsf{j}}\}$ for  all $\mathsf{i},\mathsf{j}$. Let $p$ be a prime that divides~$m^{\flat}_{\mathsf{i}}$. By \eqref{1509091200B}, the prime $p$ does not divide $m^{\sharp}_{\mathsf{i}}$. By \eqref{1509091200A},\eqref{1509091200C}, \eqref{1509091200D}, there exists $\mathsf{j}$ such that $p^{e_{\mathsf{j}}(p)}$ divides $m^{\sharp}_{\mathsf{j}}$ and 
$e_{\mathsf{j}}(p)\geq e_{\mathsf{i}}(p)$. Then, $p^{e_{\mathsf{i}}(p)}$ divides $\gcd\{m_{\mathsf{i}},m_{\mathsf{j}}\}$, and therefore, $p^{e_{\mathsf{i}}(p)}$ divides $\rho_{\mathsf{i}}$. We conclude the assertion.
\end{proof}
\begin{proposition}\label{1509221521} Let  $\vect{m},\vect{m}^{\sharp},\vect{m}^{\flat}\in\mathbb{N}^{\mathsf{d}}$  satisfy \eqref{1509091200A}, 
\eqref{1509091200B}, \eqref{1509091200C} and \eqref{1509091200D}.
\begin{enumerate}[a)]
 \item For all\, $(l,\vectrho)\in H^{(\vect{m}^{\sharp})}\times \vect{R}^{(\vect{m}^{\flat})}$, there exists a uniquely determined  element  $\vect{i}\in \I^{(\vect{m})}_{\vect{\kappa}}$ and a  (not necessarily  unique) $\vect{v}\in\{-1,1\}^{\mathsf{d}}$ such that 
\begin{equation}\label{1509221526}
\forall\,\mathsf{i} \in\{1,\ldots,\mathsf{d}\}:\quad i_{\mathsf{i}} \equiv v_{\mathsf{i}}\left(l - 2\rho_{\mathsf{i}}m^{\sharp}_{\mathsf{i}}-\kappa_{\mathsf{i}}\right) \mod 2m_{\mathsf{i}}.
\end{equation}
Therefore, a function \text{$\vectj\!:H^{(\vect{m}^{\sharp})}\times \vect{R}^{(\vect{m}^{\flat})}\to\I^{(\vect{m})}_{\vect{\kappa}}$} is well defined by $\vectj(l,\vectrho)=\vect{i}.$
\item For  $(l,\vectrho)\in H^{(\vect{m}^{\sharp})}\times \vect{R}^{(\vect{m}^{\flat})}$ and $\mathfrak{r}\in\{0,1\}:$  $\vectj(l,\vectrho)\in \I^{(\vect{m})}_{\vect{\kappa},\mathfrak{r}}$ $\Longleftrightarrow$ $l\equiv\mathfrak{r}\tmod 2$.
\item Using \eqref{1609080206}, for $\vect{i}\in \I^{(\vect{m})}_{\vect{\kappa},\mathsf{M}}$ with $\mathsf{M}\subseteq\{1,\ldots,\mathsf{d}\}$ we have \[\#\{\,(l,\vectrho)\in H^{(\vect{m}^{\sharp})}\times \vect{R}^{(\vect{m}^{\flat})}\,|\,\vectj(l,\vectrho)=\vect{i}\,\}=2^{\#\mathsf{M}}.\]
\end{enumerate}
\end{proposition}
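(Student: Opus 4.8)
The plan is to dispose of parts a) and b) by an elementary residue argument and to reduce part c) to the Chinese remainder theorem.

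For a), I would fix $(l,\vectrho)\in H^{(\vect{m}^{\sharp})}\times\vect{R}^{(\vect{m}^{\flat})}$ and, for every $\mathsf{i}$, set $n_{\mathsf{i}}=l-2\rho_{\mathsf{i}}m^{\sharp}_{\mathsf{i}}-\kappa_{\mathsf{i}}$. The elementary fact I would isolate first is: for any integer $n$ there is a unique $j\in\{0,\ldots,m_{\mathsf{i}}\}$ with $j\equiv n \mod 2m_{\mathsf{i}}$ or $j\equiv -n \mod 2m_{\mathsf{i}}$, the accompanying sign being unique precisely when $0<j<m_{\mathsf{i}}$ --- one reduces $n$ modulo $2m_{\mathsf{i}}$ and, if the representative exceeds $m_{\mathsf{i}}$, replaces it by its negative. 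Applying this coordinatewise produces $\vect{i}$ and $\vect{v}$ fulfilling \eqref{1509221526}. Since $2\rho_{\mathsf{i}}m^{\sharp}_{\mathsf{i}}$ is even, $i_{\mathsf{i}}\equiv n_{\mathsf{i}}\equiv l-\kappa_{\mathsf{i}} \tmod 2$, hence $i_{\mathsf{i}}-\kappa_{\mathsf{i}}\equiv l\tmod 2$ for every $\mathsf{i}$; this shows $\vect{i}\in\I^{(\vect{m})}_{\vect{\kappa},\mathfrak{r}}$ for the $\mathfrak{r}\in\{0,1\}$ with $\mathfrak{r}\equiv l\tmod 2$, in particular $\vect{i}\in\I^{(\vect{m})}_{\vect{\kappa}}$. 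Uniqueness of $\vect{i}$ is automatic: any $\vect{i}'\in\I^{(\vect{m})}_{\vect{\kappa}}$ satisfying \eqref{1509221526} has $i'_{\mathsf{i}}\equiv\pm n_{\mathsf{i}} \mod 2m_{\mathsf{i}}$ with $0\le i'_{\mathsf{i}}\le m_{\mathsf{i}}$, which forces $i'_{\mathsf{i}}=i_{\mathsf{i}}$. Part b) then comes for free, since the argument just given shows $\vectj(l,\vectrho)\in\I^{(\vect{m})}_{\vect{\kappa},\mathfrak{r}}$ exactly for the $\mathfrak{r}$ with $\mathfrak{r}\equiv l\tmod 2$, and $\I^{(\vect{m})}_{\vect{\kappa},0}$ and $\I^{(\vect{m})}_{\vect{\kappa},1}$ are disjoint.

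For c), I would fix $\vect{i}\in\I^{(\vect{m})}_{\vect{\kappa},\mathsf{M}}$. Multiplying \eqref{1509221526} through by $v_{\mathsf{i}}$, the statement $\vectj(l,\vectrho)=\vect{i}$ becomes: there is a $\vect{v}\in\{-1,1\}^{\mathsf{d}}$ with $l-2\rho_{\mathsf{i}}m^{\sharp}_{\mathsf{i}}\equiv a_{\mathsf{i}} \mod 2m_{\mathsf{i}}$ for all $\mathsf{i}$, where $a_{\mathsf{i}}=\kappa_{\mathsf{i}}+v_{\mathsf{i}}i_{\mathsf{i}}$. Because $\vect{i}\in\I^{(\vect{m})}_{\vect{\kappa}}$, the differences $i_{\mathsf{i}}-\kappa_{\mathsf{i}}$ all have a common parity, hence so do all the $a_{\mathsf{i}}\equiv\kappa_{\mathsf{i}}+i_{\mathsf{i}}\equiv i_{\mathsf{i}}-\kappa_{\mathsf{i}}\tmod 2$. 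The central step --- which I expect to be the main obstacle --- is to show that \emph{for each fixed $\vect{v}$ this system has exactly one solution $(l,\vectrho)\in H^{(\vect{m}^{\sharp})}\times\vect{R}^{(\vect{m}^{\flat})}$}. Reducing the system modulo $2m^{\sharp}_{\mathsf{i}}$ (which divides $2m_{\mathsf{i}}$) and using $2\rho_{\mathsf{i}}m^{\sharp}_{\mathsf{i}}\equiv 0\mod 2m^{\sharp}_{\mathsf{i}}$, any solution must satisfy $l\equiv a_{\mathsf{i}}\mod 2m^{\sharp}_{\mathsf{i}}$. By \eqref{1509091200C} the $m^{\sharp}_{\mathsf{i}}$ are pairwise relatively prime, so at most one of them is even; hence $\gcd\{2m^{\sharp}_{\mathsf{i}},2m^{\sharp}_{\mathsf{j}}\}=2$ and $\lcm\{2m^{\sharp}_1,\ldots,2m^{\sharp}_{\mathsf{d}}\}=2\p[\vect{m}^{\sharp}]$. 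As all $a_{\mathsf{i}}$ have the same parity, the Chinese remainder theorem supplies a unique $l\in\{0,\ldots,2\p[\vect{m}^{\sharp}]-1\}=H^{(\vect{m}^{\sharp})}$ with $l\equiv a_{\mathsf{i}}\mod 2m^{\sharp}_{\mathsf{i}}$ for all $\mathsf{i}$. Writing $l-a_{\mathsf{i}}=2m^{\sharp}_{\mathsf{i}}c_{\mathsf{i}}$, the $\mathsf{i}$-th congruence then collapses to $\rho_{\mathsf{i}}\equiv c_{\mathsf{i}}\mod m^{\flat}_{\mathsf{i}}$, which has the unique solution $\rho_{\mathsf{i}}\in\{0,\ldots,m^{\flat}_{\mathsf{i}}-1\}$; this establishes the claim. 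The only point needing a short prime-by-prime check is the identity $\lcm\{2m^{\sharp}_1,\ldots,2m^{\sharp}_{\mathsf{d}}\}=2\p[\vect{m}^{\sharp}]$, since the numbers $2m^{\sharp}_{\mathsf{i}}$ are themselves not pairwise coprime.

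Finally I would assemble the count. By a), every $(l,\vectrho)$ in the fibre $\{\,(l,\vectrho)\mid\vectj(l,\vectrho)=\vect{i}\,\}$ arises from some $\vect{v}$, and two sign vectors $\vect{v},\vect{v}'$ yield the same solution exactly when $v_{\mathsf{i}}i_{\mathsf{i}}\equiv v'_{\mathsf{i}}i_{\mathsf{i}}\mod 2m_{\mathsf{i}}$ for every $\mathsf{i}$, i.e.\ when $v_{\mathsf{i}}=v'_{\mathsf{i}}$ for every $\mathsf{i}$ with $0<i_{\mathsf{i}}<m_{\mathsf{i}}$; by the definition \eqref{1609080206} of $\I^{(\vect{m})}_{\vect{\kappa},\mathsf{M}}$ this is precisely the condition that $\vect{v}$ and $\vect{v}'$ agree on $\mathsf{M}$. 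Hence $\vect{v}\mapsto(l,\vectrho)$ descends to a bijection between the $2^{\#\mathsf{M}}$ sign patterns on $\mathsf{M}$ and the fibre $\{\,(l,\vectrho)\mid\vectj(l,\vectrho)=\vect{i}\,\}$, which gives $\#\{\,(l,\vectrho)\in H^{(\vect{m}^{\sharp})}\times\vect{R}^{(\vect{m}^{\flat})}\mid\vectj(l,\vectrho)=\vect{i}\,\}=2^{\#\mathsf{M}}$ as required.
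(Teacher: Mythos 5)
Your proof is correct and follows essentially the same route as the paper: parts a) and b) by folding each residue $l-2\rho_{\mathsf{i}}m^{\sharp}_{\mathsf{i}}-\kappa_{\mathsf{i}}$ into $\{0,\ldots,m_{\mathsf{i}}\}$ coordinatewise, and part c) by constructing, for each sign vector $\vect{v}$, the unique preimage $(l,\vectrho)$ via the Chinese remainder theorem applied to the moduli $2m^{\sharp}_{\mathsf{i}}$ and then counting the sign vectors that collapse to the same preimage. The only local difference is that you establish the uniqueness of $\vectrho$ by the direct reduction $\rho_{\mathsf{i}}\equiv c_{\mathsf{i}}\tmod m^{\flat}_{\mathsf{i}}$, whereas the paper invokes Lemma \ref{1708311652}; both are valid, and yours is marginally more self-contained at that step.
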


Note that we have fixed  $\vect{m},\vect{m}^{\sharp},\vect{m}^{\flat}$ and that we have omitted the indication of the dependency on $\vect{m}^{\sharp}$, $\vect{m}^{\flat}$, $\vect{\kappa}$ in the notation of the function $\vectj$.

\medskip

\begin{proof} 
For $(l,\vectrho)$ we can find a tupel $\vect{i}$ with $0\leq i_{\mathsf{i}}\leq m_{\mathsf{i}}$ and $\vect{v} \in \{-1,1\}^{\mathsf{d}}$ satisfying~\eqref{1509221526}.
Further, we have $i_{\mathsf{i}}+\kappa_{\mathsf{i}}\equiv l\equiv  i_{\mathsf{j}}+\kappa_{\mathsf{j}} \tmod 2$ for all \text{$\mathsf{i}$, $\mathsf{j}$}. 
Therefore, we have $\vect{i}\in \I^{(\vect{m})}_{\vect{\kappa}}$ and the restriction $0\leq i_{\mathsf{i}}\leq m_{\mathsf{i}}$ implies the uniqueness of
$\vect{i}$. We have shown~a), and statement b) follows directly from \eqref{1509221526} and the definitions in \eqref{1509221632}. 

If $\vect{i}\in\I^{(\vect{m})}_{\vect{\kappa}}$ and $\vect{v}\in \{-1,1\}^{\mathsf{d}}$, then for $a_{\mathsf{i}}=v_{\mathsf{i}}i_{\mathsf{i}}+\kappa_{\mathsf{i}}$ and $\vect{k} = 2 \vect{m}^{\sharp}$ condition \eqref{1511161925} holds by \eqref{1509091200C} and
by the definition of $\I^{(\vect{m})}_{\vect{\kappa}}$. Now, the Chinese remainder theorem yields a unique number $l\in \{0,\ldots,2\p[\vect{m}^{\sharp}]-1\}$ satisfying
\[
\forall\,\mathsf{i}\in\{1,\ldots,\mathsf{d}\}:\quad v_{\mathsf{i}}i_{\mathsf{i}}+\kappa_{\mathsf{i}} \equiv l \mod 2m^{\sharp}_{\mathsf{i}}.
\]
Since \eqref{1509091200A} holds, we can also find an element $\vectrho\in \vect{R}^{(\vect{m}^{\flat})}$ such that \eqref{1509221526} holds. By Lemma \ref{1708311652} such an element $\vectrho\in \vect{R}^{(\vect{m}^{\flat})}$ is uniquely determined.
In this way, a 
function $\mathfrak{g}_{\vect{i}}:\{-1,1\}^{\mathsf{d}}\to H^{(\vect{m}^{\sharp})}\times \vect{R}^{(\vect{m}^{\flat})}$ is well-defined 
by $\mathfrak{g}_{\vect{i}}(\vect{v})=(l,\vectrho)$, where $(l,\vectrho)$ is chosen such that \eqref{1509221526} holds. If $\vect{i}\in \I^{(\vect{m})}_{\vect{\kappa},\mathsf{M}}$, then $\mathfrak{g}_{\vect{i}}(\vect{v}')= \mathfrak{g}_{\vect{i}}(\vect{v})$ holds if and only if 
\[\forall\,\mathsf{i}\in \{1,\ldots,\mathsf{d}\}:\quad (v'_{\mathsf{i}}-v_{\mathsf{i}})i_{\mathsf{i}}\equiv 0\mod 2m_{\mathsf{i}},\]
i.e. if and only if   $v'_{\mathsf{i}}=v_{\mathsf{i}}$ for all $\mathsf{i}\in\mathsf{M}$. We conclude statement c).
\end{proof}

\begin{example} \label{ex:1}
As a first example, we consider the bivariate node sets $\LC^{(2m,m)}_{(0,0)}$, with a parameter $m \in \mathbb{N}$. For $m = 5$, the set $\LC^{(10,5)}_{(0,0)}$ and the corresponding
index set $\I^{(10,5)}_{(0,0)}$ are illustrated in Figure \ref{fig:doublem-1}.  A possible decomposition of $\vect{m} = (2m,m)$ according to  Proposition \ref{1509061252} is given by 
$\vect{m}^{\sharp} = (2m,1)$, $\vect{m}^{\flat} = (1, m)$. Then, the sets in \eqref{1708242016} are 
\[H^{(2m,1)}=\{0,\ldots,4m-1\}, \qquad
\vect{R}^{(1, m)} = \{ 0 \} \times \{0,\ldots,m-1\}.\]
The number  of elements in $\LC^{(2m,m)}_{(0,0),\mathfrak{r}}$, $\mathfrak{r} \in \{0,1\}$, is given by 
\[ \# \LC^{(2m,m)}_{(0,0),\mathfrak{r}} = \dfrac1{2} \left\{ 
\begin{array}{cl}
 (m+1)(m+1-\mathfrak{r})  & \text{if $m$ is odd}, \\
 (m+1-\mathfrak{r})(m+2-2\mathfrak{r}) & \text{if $m$ is even}.
\end{array} \right.
\]
\end{example}

\setcounter{footnote}{-1}
\section[Characterizations of the node sets]{Characterizations of the node sets$^*$\footnote{$^*$The considerations  in the  Sections \ref{17008191035}, \ref{17008191036}, \ref{1609011843} and \ref{17008191038} are independent of this Section \ref{1708201701}.}}\label{1708201701}

For a first characterization of the node sets $\LC^{(\vect{m})}_{\vect{\kappa}}$, we consider the affine real algebraic 
Chebyshev variety $\mathcal{C}^{(\vect{m})} $ over the hypercube $[-1,1]^{\mathsf{d}}$ given by
\begin{equation} \label{1509222009} 
\mathcal{C}^{(\vect{m})}_{\vect{\kappa}} 
= \left\{\,\left.\vect{x} \in [-1,1]^{\mathsf{d}}\,\right|\,(-1)^{\kappa_1} T_{m_1}(x_1) = \ldots = (-1)^{\kappa_{\mathsf{d}}} T_{m_{\mathsf{d}}}(x_{\mathsf{d}}) \,\right\}.
\end{equation}
The name of this variety stems from the fact that $T_{m_{\mathsf{i}}}(x_{\mathsf{i}}) = \cos(m_{\mathsf{i}} \arccos x_{\mathsf{i}} )$ are the Chebyshev polynomials of the first kind with degree $m_{\mathsf{i}}$.
The Chebyshev variety $\mathcal{C}^{(\vect{m})}_{\vect{\kappa}}$ and its singularities are related to the considered node sets in the following way.

\begin{theorem} 
A point $\vect{x}\in \mathcal{C}^{(\vect{m})}_{\vect{\kappa}}$ is a  singular point of $\mathcal{C}^{(\vect{m})}_{\vect{\kappa}}$ if and only if it is an element of $\LC^{(\vect{m})}_{\vect{\kappa},\mathsf{M}}$ for $\mathsf{M}\subseteq \{1,\ldots,\mathsf{d}\}$ with $\#\mathsf{M}\geq 2$. Further, for $\mathfrak{r}\in \{0,1\}$ 
we have
\begin{equation}  \label{1509222041}
\LC^{(\vect{m})}_{\vect{\kappa},\mathsf{M},\mathfrak{r}} =\left\{ \, \left.\vect{x} \in \vect{F}^{\mathsf{d}}_{\mathsf{M}}\,\right|\, (-1)^{\kappa_1}T_{m_1}(x_1) = \ldots = (-1)^{\kappa_{\mathsf{d}}}T_{m_{\mathsf{d}}}(x_{\mathsf{d}}) = (-1)^{\mathfrak{r}} \,\right\}.
\end{equation}
\end{theorem}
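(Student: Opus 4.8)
The plan is to prove the set identity \eqref{1509222041} first, and then to read off the characterization of the singular points from it.

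For \eqref{1509222041}, the only fact needed is that the Chebyshev--Gau\ss--Lobatto points \eqref{201511121326} satisfy $T_{m_{\mathsf{i}}}(z^{(m_{\mathsf{i}})}_{i_{\mathsf{i}}})=\cos(i_{\mathsf{i}}\pi)=(-1)^{i_{\mathsf{i}}}$ and that $z^{(m_{\mathsf{i}})}_{i_{\mathsf{i}}}\in(-1,1)$ holds precisely when $0<i_{\mathsf{i}}<m_{\mathsf{i}}$. For ``$\subseteq$'' I would take $\vect{x}=\vect{z}^{(\vect{m})}_{\vect{i}}$ with $\vect{i}\in\I^{(\vect{m})}_{\vect{\kappa},\mathsf{M},\mathfrak{r}}$ and compute $(-1)^{\kappa_{\mathsf{i}}}T_{m_{\mathsf{i}}}(x_{\mathsf{i}})=(-1)^{\kappa_{\mathsf{i}}+i_{\mathsf{i}}}=(-1)^{\mathfrak{r}}$, using $i_{\mathsf{i}}\equiv\kappa_{\mathsf{i}}-\mathfrak{r}\bmod 2$ from \eqref{1509221632}; the definition \eqref{1609080206} of $\I^{(\vect{m})}_{\vect{\kappa},\mathsf{M},\mathfrak{r}}$ gives $\vect{x}\in\vect{F}^{\mathsf{d}}_{\mathsf{M}}$. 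For ``$\supseteq$'' I would start from $\vect{x}\in\vect{F}^{\mathsf{d}}_{\mathsf{M}}$ with $(-1)^{\kappa_{\mathsf{i}}}T_{m_{\mathsf{i}}}(x_{\mathsf{i}})=(-1)^{\mathfrak{r}}$ for all $\mathsf{i}$, substitute $x_{\mathsf{i}}=\cos\theta_{\mathsf{i}}$ with $\theta_{\mathsf{i}}\in[0,\pi]$ so that $\cos(m_{\mathsf{i}}\theta_{\mathsf{i}})=(-1)^{\mathfrak{r}+\kappa_{\mathsf{i}}}\in\{-1,1\}$; this forces $\theta_{\mathsf{i}}=i_{\mathsf{i}}\pi/m_{\mathsf{i}}$ for a unique $i_{\mathsf{i}}\in\{0,\dots,m_{\mathsf{i}}\}$, and matching signs gives $i_{\mathsf{i}}\equiv\kappa_{\mathsf{i}}-\mathfrak{r}\bmod 2$, so $\vect{x}=\vect{z}^{(\vect{m})}_{\vect{i}}$ with $\vect{i}\in\I^{(\vect{m})}_{\vect{\kappa},\mathfrak{r}}$; since $\vect{x}\in\vect{F}^{\mathsf{d}}_{\mathsf{M}}$ this upgrades to $\vect{i}\in\I^{(\vect{m})}_{\vect{\kappa},\mathsf{M},\mathfrak{r}}$, i.e.\ $\vect{x}\in\LC^{(\vect{m})}_{\vect{\kappa},\mathsf{M},\mathfrak{r}}$. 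In particular this records that $\LC^{(\vect{m})}_{\vect{\kappa}}\subseteq\mathcal{C}^{(\vect{m})}_{\vect{\kappa}}$.

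For the singular points, put $g_{\mathsf{i}}(\vect{x})=(-1)^{\kappa_{\mathsf{i}}}T_{m_{\mathsf{i}}}(x_{\mathsf{i}})$, so that $\mathcal{C}^{(\vect{m})}_{\vect{\kappa}}$ is the zero set in $[-1,1]^{\mathsf{d}}$ of the map $G=(g_1-g_2,\dots,g_{\mathsf{d}-1}-g_{\mathsf{d}})$, and use that $\vect{x}\in\mathcal{C}^{(\vect{m})}_{\vect{\kappa}}$ is a singular point exactly when $\operatorname{rank}DG(\vect{x})<\mathsf{d}-1$, i.e.\ when $\ker DG(\vect{x})$ has dimension $>1$. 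Since each $g_{\mathsf{i}}$ depends only on $x_{\mathsf{i}}$, one has $\ker DG(\vect{x})=\{\vect{h}\in\mathbb{R}^{\mathsf{d}}:g_1'(x_1)h_1=\dots=g_{\mathsf{d}}'(x_{\mathsf{d}})h_{\mathsf{d}}\}$; writing $Z(\vect{x})=\{\mathsf{i}:T_{m_{\mathsf{i}}}'(x_{\mathsf{i}})=0\}$, this kernel has dimension $\max\{1,\#Z(\vect{x})\}$, because if $Z(\vect{x})\neq\emptyset$ the common value is forced to be $0$ and hence $h_{\mathsf{i}}=0$ off $Z(\vect{x})$ and free on it, while if $Z(\vect{x})=\emptyset$ the kernel is parametrized by that single common value. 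Hence $\vect{x}\in\mathcal{C}^{(\vect{m})}_{\vect{\kappa}}$ is singular if and only if $\#Z(\vect{x})\geq 2$.

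Finally, since $T_{m}$ has degree $m$ with $T_{m}'(1)=m^{2}$ and $T_{m}'(-1)=(-1)^{m+1}m^{2}$ both nonzero, the $m-1$ zeros of $T_{m}'$ are exactly the interior Chebyshev--Gau\ss--Lobatto points $z^{(m)}_{1},\dots,z^{(m)}_{m-1}$; thus $\mathsf{i}\in Z(\vect{x})$ iff $x_{\mathsf{i}}=z^{(m_{\mathsf{i}})}_{i_{\mathsf{i}}}$ for some $0<i_{\mathsf{i}}<m_{\mathsf{i}}$, and then $T_{m_{\mathsf{i}}}(x_{\mathsf{i}})=(-1)^{i_{\mathsf{i}}}\in\{-1,1\}$. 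If $\vect{x}\in\mathcal{C}^{(\vect{m})}_{\vect{\kappa}}$ is singular, choose $\mathsf{k}\neq\mathsf{l}$ in $Z(\vect{x})$; the common value of $g_1(\vect{x}),\dots,g_{\mathsf{d}}(\vect{x})$ equals $g_{\mathsf{k}}(\vect{x})=(-1)^{\kappa_{\mathsf{k}}+i_{\mathsf{k}}}=(-1)^{\mathfrak{r}}$ for some $\mathfrak{r}\in\{0,1\}$, so $(-1)^{\kappa_{\mathsf{i}}}T_{m_{\mathsf{i}}}(x_{\mathsf{i}})=(-1)^{\mathfrak{r}}$ for every $\mathsf{i}$, and taking $\mathsf{M}=\{\mathsf{i}:x_{\mathsf{i}}\in(-1,1)\}\supseteq\{\mathsf{k},\mathsf{l}\}$ the identity \eqref{1509222041} gives $\vect{x}\in\LC^{(\vect{m})}_{\vect{\kappa},\mathsf{M},\mathfrak{r}}\subseteq\LC^{(\vect{m})}_{\vect{\kappa},\mathsf{M}}$ with $\#\mathsf{M}\geq 2$. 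Conversely, if $\vect{x}\in\LC^{(\vect{m})}_{\vect{\kappa},\mathsf{M}}$ with $\#\mathsf{M}\geq 2$, then $\vect{x}\in\mathcal{C}^{(\vect{m})}_{\vect{\kappa}}$ by the first part and $\mathsf{M}\subseteq Z(\vect{x})$ since each $\mathsf{i}\in\mathsf{M}$ has $0<i_{\mathsf{i}}<m_{\mathsf{i}}$, so $\#Z(\vect{x})\geq 2$ and $\vect{x}$ is singular. The only delicate point is the kernel/rank computation for $DG$ together with making sure the notion of singularity in play is the Jacobian criterion --- equivalently, that $\mathcal{C}^{(\vect{m})}_{\vect{\kappa}}$ is genuinely a curve, so that the Zariski tangent space jumps exactly at the points with $\#Z(\vect{x})\geq 2$; the remaining steps are routine bookkeeping with the substitution $x=\cos\theta$ and the congruences defining the index sets.
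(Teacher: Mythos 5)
Your proposal is correct and follows essentially the same route as the paper: both arguments realize $\mathcal{C}^{(\vect{m})}_{\vect{\kappa}}$ as the zero set of the chain of differences $(-1)^{\kappa_{\mathsf{i}}}T_{m_{\mathsf{i}}}(x_{\mathsf{i}})-(-1)^{\kappa_{\mathsf{i}+1}}T_{m_{\mathsf{i}+1}}(x_{\mathsf{i}+1})$, apply the Jacobian rank criterion, and identify the zeros of $T_{m_{\mathsf{i}}}'$ with the interior Chebyshev--Gau\ss--Lobatto points where $T_{m_{\mathsf{i}}}=\pm 1$. You merely reorder the two parts and spell out details the paper leaves implicit (the explicit kernel-dimension count $\max\{1,\#Z(\vect{x})\}$ and the full verification of \eqref{1509222041}, which the paper dismisses as ``straightforward''), both of which check out.
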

The formula \eqref{1509222041} holds for all $\mathsf{M}\subseteq \{1,\ldots,\mathsf{d}\}$. 
If $\#\mathsf{M}\in \{0,1\}$, the elements of the sets in \eqref{1509222041} are regular points of the variety on the corners and edges of $[-1,1]^{\mathsf{d}}$.

\medskip

\begin{proof}
We have $\mathcal{C}^{(\vect{m})}_{\vect{\kappa}} = \left\{\,\left.\vect{x} \in [-1,1]^{\mathsf{d}}\,\right|\,f_1(\vect{x})=0,f_2(\vect{x})=0\ldots,f_{\mathsf{d}-1}(\vect{x})=0\right.\,\}$, where $f_{\mathsf{i}}(\vect{x})=S_{\mathsf{i}}(x_{\mathsf{i}})-S_{\mathsf{i}+1}(x_{\mathsf{i}+1})$ with $S_{\mathsf{i}}=(-1)^{\kappa_{\mathsf{i}}}T_{m_{\mathsf{i}}}$.
The singular points  are the points $\vect{x}^* \in [-1,1]^{\mathsf{d}}$ for which the Jacobian matrix of $(f_1,\ldots,f_{\mathsf{d}-1})$, given by
\[\begin{pmatrix} S'_1(x^*_1) & - S'_2(x^*_2)&  &  &   \\  & S'_2(x^*_2) & -S'_3(x^*_3)   & &  \\ && \ddots & \ddots & \\ &&& S'_{\mathsf{d}-1}(x^*_{\mathsf{d}-1})& - S'_{\mathsf{d}}(x^*_{\mathsf{d}})
                       \end{pmatrix},
\]
has not full rank $\mathsf{d}-1$. This is exactly the case if
$T'_{m_{\mathsf{i}}}(x_{\mathsf{i}}^*) = 0$ for at least two different indices $\mathsf{i}$.  
We have $T'_{m_{\mathsf{i}}}(x_{\mathsf{i}}^*)=0$ if and only if  $x_{\mathsf{i}}^*=z^{(m_{\mathsf{i}})}_{i_{\mathsf{i}}}$
for some $i_{\mathsf{i}}\in \{1,\ldots,m_{\mathsf{i}}-1\}$, and in this case  $T_{m_{\mathsf{i}}}(x_{\mathsf{i}}^*)\in \{-1,1\}$.
Since $\vect{x}^*\in \mathcal{C}^{(\vect{m})}_{\vect{\kappa}}$, there exists $\mathfrak{r}\in\{0,1\}$ such that
\[ (-1)^{\kappa_1}T_{m_1}(x_1^*) = \ldots = (-1)^{\kappa_{\mathsf{d}}}T_{m_{\mathsf{d}}}(x_{\mathsf{d}}^*)=(-1)^{\mathfrak{r}}.\]
Thus, we get $\vect{i}\in \I^{(\vect{m})}_{\vect{\kappa},\mathsf{M},\mathfrak{r}}$, $\vect{x}^*=\vect{z}^{(\vect{m})}_{\vect{i}}\in\LC^{(\vect{m})}_{\vect{\kappa},\mathsf{M},\mathfrak{r}}$ 
for some $\mathsf{M} \subseteq \{1,\ldots,\mathsf{d}\}$ with $\#M\geq 2$. On the other hand, for every point 
$\vect{z}^{(\vect{m})}_{\vect{i}}\in\LC^{(\vect{m})}_{\vect{\kappa},\mathsf{M},\mathfrak{r}}$
with $\#M\geq 2$, we have that $T'_{m_{\mathsf{i}}}(z_{i_{\mathsf{i}}}^{(m_{\mathsf{i}})}) = 0$ 
for all  $\mathsf{i} \in \mathsf{M}$ and the rank of the Jacobian matrix is $\mathsf{d}-\#M<\mathsf{d}-1$.

From the arguments above, the general formula \eqref{1509222041} can be derived in a straightforward way.
If~$\#\mathsf{M}\in \{0,1\}$, then the rank of the Jacobian matrix is $\mathsf{d}-1$ and the points in $\LC^{(\vect{m})}_{\vect{\kappa},\mathsf{M},\mathfrak{r}}$
are regular points on the edges (if $\#\mathsf{M} = 1$) or on the corners (if~$\#\mathsf{M} = 0$) of the hypercube $[-1,1]^{\mathsf{d}}$.
\end{proof}
 
\medskip

Using the class decomposition of  $H^{(\vect{m}^{\sharp})}\times \vect{R}^{(\vect{m}^{\flat})}$ according to Proposition~\ref{1509221521}, we can now characterize the node points $\LC^{(\vect{m})}_{\vect{\kappa}}$ and the 
Chebyshev variety $\mathcal{C}^{(\vect{m})}_{\vect{\kappa}}$ in terms of Lissajous curves. The statements are generalizations of results in \cite{DenckerErb2015a}. 

\medskip

 For $\vect{m}\in\mathbb{N}^{\mathsf{d}}$, $\vect{\xi}\in\mathbb{R}^{\mathsf{d}}$, $\vect{u}\in\{-1,1\}^{\mathsf{d}}$ 
 we define the curves $\vect{\ell}^{(\vect{m})}_{\vect{\xi},\vect{u}}\,:\,\mathbb{R}\to [-1,1]^{\mathsf{d}}$ by
\[\vect{\ell}^{(\vect{m})}_{\vect{\xi},\vect{u}}(t)=\left(u_1\cos \left(  \;\! \frac{\lcm[\vect{m}] \cdot t- \xi_1\pi}{m_1}\right), \cdots, u_{\mathsf{d}}\cos \left( \;\! \frac{\lcm[\vect{m}] \cdot t - \xi_{\mathsf{d}}\pi}{m_{\mathsf{d}}}\right) \right), \quad t \in \mathbb{R}.\]
By \cite[Theorem 1.1]{DenckerErb2015a} we know that the Lissajous curve $\vect{\ell}^{(\vect{m})}_{\vect{\xi},\vect{u}}$ is a closed curve with fundamental period $2\pi$.  
If $\vect{u}=\vect{1}$, we omit this parameter in the notation and write
\[\vect{\ell}^{(\vect{m})}_{\vect{\xi}}=\vect{\ell}^{(\vect{m})}_{\vect{\xi},\vect{1}}.\]
We consider now the following set of Lissajous curves
\begin{equation}\label{1609010824} 
\vect{\mathfrak{L}}^{(\vect{m}^{\sharp}, \, \vect{m}^{\flat})}_{\vect{\kappa}}=\left\{ \,\vect{\ell}^{(\vect{m})}_{\left(2\rho_1m^{\sharp}_1+\kappa_1,\ldots,2\rho_{\mathsf{d}}m^{\sharp}_{\mathsf{d}}+\kappa_{\mathsf{d}}\right)}\left|\ \vectrho\in \vect{R}^{(\vect{m}^{\flat})}\right.\right\}.
\end{equation}

\begin{theorem} \label{thm:decompositionlissajous} Let  $\vect{m},\vect{m}^{\sharp},\vect{m}^{\flat}\in\mathbb{N}^{\mathsf{d}}$  satisfy \eqref{1509091200A}, 
\eqref{1509091200B}, \eqref{1509091200C} and \eqref{1509091200D}.
\begin{enumerate}[a)]
 \item Using the sampling points $t^{(\vect{m})}_{l}$ given in \eqref{1608311013}, we have for $\mathfrak{r}\in \{0,1\}$ the identity
\[\LC^{(\vect{m})}_{\vect{\kappa},\mathfrak{r}}
= \left\{\,\vect{\ell}(t^{(\vect{m})}_{l})\,\left|\ \vect{\ell}\in \vect{\mathfrak{L}}^{(\vect{m}^{\sharp}, \, \vect{m}^{\flat})}_{\vect{\kappa}},\,l\in H^{(\vect{m}^{\sharp})}\ 
\text{with}\  l\equiv \mathfrak{r}\tmod 2\right.\right\}.\]
 \item The Chebyshev variety $\mathcal{C}^{(\vect{m})}_{\vect{\kappa}}$ can be written as 
$\displaystyle\mathcal{C}^{(\vect{m})}_{\vect{\kappa}} = \bigcup\limits_{\vect{\ell}\in\vect{\mathfrak{L}}^{(\vect{m}^{\sharp}, \, \vect{m}^{\flat})}_{\vect{\kappa}}}\vect{\ell}([0,2\pi)).$
\end{enumerate}
\end{theorem}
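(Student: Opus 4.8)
The plan is to derive both parts directly from Proposition~\ref{1509221521}, which already encodes the combinatorial heart of the matter, together with the explicit formula for the sampling points and a short trigonometric identity. First I would unwind the definition of the Lissajous curves in the family $\vect{\mathfrak{L}}^{(\vect{m}^{\sharp},\,\vect{m}^{\flat})}_{\vect{\kappa}}$ evaluated at the nodes $t^{(\vect{m})}_{l}=l\pi/\lcm[\vect{m}]$. Plugging into the definition of $\vect{\ell}^{(\vect{m})}_{\vect{\xi}}$ with $\xi_{\mathsf{i}}=2\rho_{\mathsf{i}}m^{\sharp}_{\mathsf{i}}+\kappa_{\mathsf{i}}$, the $\mathsf{i}$-th component becomes $\cos\bigl((l-2\rho_{\mathsf{i}}m^{\sharp}_{\mathsf{i}}-\kappa_{\mathsf{i}})\pi/m_{\mathsf{i}}\bigr)$, since the $\lcm[\vect{m}]$ cancels. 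The key elementary observation is that $z^{(k)}_{i}=\cos(i\pi/k)$ depends only on $i \bmod 2k$ and satisfies $\cos(i\pi/k)=\cos((\pm i)\pi/k)$, so that $\cos(a\pi/m_{\mathsf{i}}) = z^{(m_{\mathsf{i}})}_{i_{\mathsf{i}}}$ whenever $a \equiv \pm i_{\mathsf{i}} \bmod 2m_{\mathsf{i}}$. This is exactly the congruence \eqref{1509221526} appearing in Proposition~\ref{1509221521}~a).

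For part a), I would argue both inclusions via the function $\vectj$. Given $\vect{\ell}\in\vect{\mathfrak{L}}^{(\vect{m}^{\sharp},\,\vect{m}^{\flat})}_{\vect{\kappa}}$ corresponding to some $\vectrho\in\vect{R}^{(\vect{m}^{\flat})}$ and $l\in H^{(\vect{m}^{\sharp})}$ with $l\equiv\mathfrak{r}\bmod 2$, the computation above shows $\vect{\ell}(t^{(\vect{m})}_{l}) = \vect{z}^{(\vect{m})}_{\vectj(l,\vectrho)}$ (taking $\vect{v}=\vect{1}$ in \eqref{1509221526}, the sign vector being irrelevant for the cosine value), and by Proposition~\ref{1509221521}~b) we have $\vectj(l,\vectrho)\in\I^{(\vect{m})}_{\vect{\kappa},\mathfrak{r}}$, so the point lies in $\LC^{(\vect{m})}_{\vect{\kappa},\mathfrak{r}}$. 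Conversely, given $\vect{i}\in\I^{(\vect{m})}_{\vect{\kappa},\mathfrak{r}}$, I pick any preimage $(l,\vectrho)$ under $\vectj$ — such a preimage exists because, as shown in the proof of Proposition~\ref{1509221521} (via the function $\mathfrak{g}_{\vect{i}}$ constructed with the Chinese remainder theorem), $\vectj$ is surjective — and then $l\equiv\mathfrak{r}\bmod 2$ by part~b), so $\vect{z}^{(\vect{m})}_{\vect{i}} = \vect{\ell}(t^{(\vect{m})}_{l})$ for the curve associated to $\vectrho$, exhibiting it in the right-hand set.

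For part b), I would first note that as $l$ ranges over $H^{(\vect{m}^{\sharp})}=\{0,\ldots,2\p[\vect{m}^{\sharp}]-1\}$, the points $t^{(\vect{m})}_{l}$ sweep out exactly the values where each curve in $\vect{\mathfrak{L}}^{(\vect{m}^{\sharp},\,\vect{m}^{\flat})}_{\vect{\kappa}}$ returns to one of its own lattice-type samples; but since I want the full image $\vect{\ell}([0,2\pi))$ rather than a finite sample, I need a density/continuity argument. The clean route is: the union $\bigcup_{\vect{\ell}}\vect{\ell}([0,2\pi))$ is a compact set, and by the already-proven part~a) (extended to all of $[0,2\pi)$ rather than the $t^{(\vect{m})}_{l}$) it is contained in $\mathcal{C}^{(\vect{m})}_{\vect{\kappa}}$, because each component $u_{\mathsf{i}}\cos((\lcm[\vect{m}]t-\xi_{\mathsf{i}}\pi)/m_{\mathsf{i}})$ satisfies $(-1)^{\kappa_{\mathsf{i}}}T_{m_{\mathsf{i}}}(x_{\mathsf{i}}) = (-1)^{\kappa_{\mathsf{i}}}\cos(\lcm[\vect{m}]t-(2\rho_{\mathsf{i}}m^{\sharp}_{\mathsf{i}}+\kappa_{\mathsf{i}})\pi) = \cos(\lcm[\vect{m}]t)$, a quantity independent of $\mathsf{i}$; so the defining equations of $\mathcal{C}^{(\vect{m})}_{\vect{\kappa}}$ hold identically along each curve. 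For the reverse inclusion, I would take $\vect{x}\in\mathcal{C}^{(\vect{m})}_{\vect{\kappa}}$, write the common value $(-1)^{\kappa_{\mathsf{i}}}T_{m_{\mathsf{i}}}(x_{\mathsf{i}}) = \cos\theta$ for some $\theta\in[0,\pi]$, and solve for angles: $x_{\mathsf{i}} = \cos\phi_{\mathsf{i}}$ with $m_{\mathsf{i}}\phi_{\mathsf{i}} \equiv \pm\theta - \kappa_{\mathsf{i}}\pi \pmod{2\pi}$. The task is then to find $t\in[0,2\pi)$ and $\vectrho\in\vect{R}^{(\vect{m}^{\flat})}$ simultaneously matching all components, i.e. $\lcm[\vect{m}]t \equiv \theta \pmod{2\pi}$ (fixing $t$ up to the period, giving $\p[\vect{m}^{\sharp}]$ choices) and then, for each coordinate, choosing $\rho_{\mathsf{i}}\in\{0,\ldots,m^{\flat}_{\mathsf{i}}-1\}$ and a sign so that $(\lcm[\vect{m}]t - (2\rho_{\mathsf{i}}m^{\sharp}_{\mathsf{i}}+\kappa_{\mathsf{i}})\pi)/m_{\mathsf{i}}\equiv \pm\phi_{\mathsf{i}}\pmod{2\pi}$; here the coprimality hypotheses \eqref{1509091200A}--\eqref{1509091200D} and the Chinese remainder theorem guarantee solvability, paralleling exactly the surjectivity argument for $\vectj$ in Proposition~\ref{1509221521}. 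I expect this last step — turning the finite-sample statement of part~a) into the full continuous image and carefully producing the parameters $(t,\vectrho)$ from an arbitrary point of the variety — to be the main obstacle, though it is entirely analogous to the bivariate case treated in \cite{DenckerErb2015a} and reduces to the same number-theoretic lemma already available here.
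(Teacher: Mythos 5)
Your proposal is correct and follows essentially the same route as the paper: part a) by evaluating the curves at $t^{(\vect{m})}_{l}$ and reducing to the congruence \eqref{1509221526} and Proposition~\ref{1509221521}, and part b) by the direct verification that $(-1)^{\kappa_{\mathsf{i}}}T_{m_{\mathsf{i}}}(x_{\mathsf{i}})=\cos(\lcm[\vect{m}]\,t)$ along each curve together with the angle-plus-Chinese-remainder construction of $(t,\vectrho)$ for the reverse inclusion. The only cosmetic point is that in part a) one cannot always take $\vect{v}=\vect{1}$ in \eqref{1509221526}, but as you note the sign is irrelevant for the cosine value, so the identification $\vect{\ell}(t^{(\vect{m})}_{l})=\vect{z}^{(\vect{m})}_{\vectj(l,\vectrho)}$ stands.
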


\vspace{-0.3em}

\begin{proof} Evaluating 
the Lissajous curves $\vect{\ell}^{(\vect{m})}_{(2\rho_1m^{\sharp}_1+\kappa_1,\ldots,2\rho_{\mathsf{d}}m^{\sharp}_{\mathsf{d}}+\kappa_{\mathsf{d}})}$ at the sampling
points $t^{(\vect{m})}_{l}$ given in \eqref{1608311013} and using \eqref{1509221526},  we obtain $ \vect{\ell}^{(\vect{m})}_{(2\rho_1m^{\sharp}_1+\kappa_1,\ldots,2\rho_{\mathsf{d}}m^{\sharp}_{\mathsf{d}}+
\kappa_{\mathsf{d}})}(t^{(\vect{m})}_{l})=z^{(\vect{m})}_{\vect{i}}.$
Therefore, Proposition \ref{1509221521} yields the characterization a).

Now, let $\vect{x}\in \mathcal{C}^{(\vect{m})}_{\vect{\kappa}}$. We choose $\theta_{\mathsf{i}}\in [0,\pi]$ and $t'\in \mathbb{R}$ such that 
$x_{\mathsf{i}}=\cos(\theta_{\mathsf{i}})$ and $(-1)^{\kappa_{\mathsf{i}}} T_{m_{\mathsf{i}}}(x_{\mathsf{i}})=\cos(\p[\vect{m}^{\sharp}]t')$ for all $\mathsf{i}$. 
Then, there are $\vect{v}\in \{-1,1\}^{\mathsf{d}}$, $\vect{h}'\in\mathbb{Z}$, such 
that $m_{\mathsf{i}}\theta_{\mathsf{i}}+\kappa_{\mathsf{i}}\pi= v_{\mathsf{i}}\p[\vect{m}^{\sharp}]t'+2 h'_{\mathsf{i}}\pi$ 
for all $\mathsf{i}$. Using this and \eqref{1509091200D}, we obtain
\[x_{\mathsf{i}}=\cos((\lcm[\vect{m}]t'+2h_{\mathsf{i}}\pi -\kappa_{\mathsf{i}}\pi)/m_{\mathsf{i}})\quad \text{for all $\mathsf{i}$},\] 
where $h_{\mathsf{i}}=v_{\mathsf{i}}h'_{\mathsf{i}}+(1-v_{\mathsf{i}})\kappa_{\mathsf{i}}/2$. By assumption \eqref{1509091200C} and the Chinese remainder theorem, we can find an $l$ such that $l\equiv 2h_{\mathsf{i}}\tmod 2m^{\sharp}_{\mathsf{i}}$.
Moreover, by  \eqref{1509091200A}, we can find a $\rho_{\mathsf{i}}\in \{0,\ldots,m^{\flat}_{\mathsf{i}}-1\}$ such that 
$l\equiv 2h_{\mathsf{i}}-2\rho_{\mathsf{i}}m^{\sharp}_{\mathsf{i}}\tmod 2m_{\mathsf{i}}$ for all $\mathsf{i}$.
Then, for  $t=t'+l\pi/\lcm[\vect{m}]$  
we get $x_{\mathsf{i}}=\cos((\lcm[\vect{m}]t-(\kappa_{\mathsf{i}}+2\rho_{\mathsf{i}}m^{\sharp}_{\mathsf{i}})\pi)/m_{\mathsf{i}})$   for all $\mathsf{i}$.
Therefore
\[ \vect{x} \in \vect{\ell}^{(\vect{m})}_{(2\rho_1m^{\sharp}_1+\kappa_1,\ldots,2\rho_{\mathsf{d}}m^{\sharp}_{\mathsf{d}}+\kappa_{\mathsf{d}})}(\mathbb{R})=
\vect{\ell}^{(\vect{m})}_{(2\rho_1m^{\sharp}_1+\kappa_1,\ldots,2\rho_{\mathsf{d}}m^{\sharp}_{\mathsf{d}}+\kappa_{\mathsf{d}})}([0,2\pi)).\] 
The relation $\vect{\ell}^{(\vect{m})}_{(2\rho_1m^{\sharp}_1+\kappa_1,\ldots,2\rho_{\mathsf{d}}m^{\sharp}_{\mathsf{d}}+\kappa_{\mathsf{d}})}([0,2\pi))\subseteq \mathcal{C}^{(\vect{m})}_{\vect{\kappa}}$ is easily verified
by inserting the curve in the definition \eqref{1509222009} of the 
Chebyshev variety $\mathcal{C}^{(\vect{m})}_{\vect{\kappa}}$.
\end{proof}

We are now going to refine the statement in Proposition \ref{thm:decompositionlissajous},~b). 
To this end, we introduce the equivalence relation \[\text{$\vect{\ell}' \simeq \vect{\ell}$\qquad if\quad $\vect{\ell}'(\mathbb{R}) = \vect{\ell}(\mathbb{R})$}\] 
for the set $\vect{\mathfrak{L}}^{(\vect{m}^{\sharp}, \, \vect{m}^{\flat})}_{\vect{\kappa}}$ given in \eqref{1609010824}, and 
we write $[\vect{\mathfrak{L}}^{(\vect{m}^{\sharp}, \, \vect{m}^{\flat})}_{\vect{\kappa}}]$ for the respective set of equivalence classes.
Note that for all $\vect{\ell} \in \vect{\mathfrak{L}}^{(\vect{m}^{\sharp}, \, \vect{m}^{\flat})}_{\vect{\kappa}}$ we have $\vect{\ell}(\mathbb{R})=\vect{\ell}([0,2\pi))$, 
since by \cite[Theorem~1.1]{DenckerErb2015a} the fundamental period of the  Lissajous curve $\vect{\ell}$ is $2\pi$. 

According to the definition in \cite{DenckerErb2015a}, we call a curve $\vect{\ell}\in \vect{\mathfrak{L}}^{(\vect{m}^{\sharp}, \, \vect{m}^{\flat})}_{\vect{\kappa}}$ {\it degenerate} if 
there exist $t'\in\mathbb{R}$, $\vect{u}\in\{-1,1\}^{\mathsf{d}}$ such that  
$\vect{\ell}(\,\cdot\,-t') = \vect{\ell}^{(\vect{m})}_{\vect{0},\vect{u}}$
and {\it non-degenerate} otherwise.

Let $\vect{m},\vect{m}^{\sharp},\vect{m}^{\flat}\in\mathbb{N}^{\mathsf{d}}$  satisfy  \eqref{1509091200A}, 
\eqref{1509091200B}, \eqref{1509091200C} and \eqref{1509091200D}.
By the Chinese remainder theorem there exists an unique element $l^{\dagger}\in \{0, \ldots, \p[\vect{m}^{\sharp}]-1\}$ satisfying
\begin{equation}\label{1708242340}
\forall\,\mathsf{i} \in\{1,\ldots,\mathsf{d}\}:\quad  l^{\dagger}\equiv \kappa_{\mathsf{i}} \mod m^{\sharp}_{\mathsf{i}}.
\end{equation}
For $\vectrho\in\vect{R}^{(\vect{m}^{\flat})}$, we define a corresponding element $\vectrho^{\dagger}=(\rho^{\dagger}_1,\ldots,\rho^{\dagger}_{\mathsf{d}})\in \vect{R}^{(\vect{m}^{\flat})}$ by
\begin{equation}\label{1708242341}
 \forall\,\mathsf{i} \in\{1,\ldots,\mathsf{d}\}:\quad \rho^{\dagger}_{\mathsf{i}}\equiv (l^{\dagger}-\kappa_{\mathsf{i}})/m^{\sharp}_{\mathsf{i}}-\rho_{\mathsf{i}}\mod m^{\flat}_{\mathsf{i}}.
\end{equation}

\begin{proposition}\label{201708201848} Let  $\vect{m},\vect{m}^{\sharp},\vect{m}^{\flat}\in\mathbb{N}^{\mathsf{d}}$  satisfy \eqref{1509091200A}, 
\eqref{1509091200B}, \eqref{1509091200C} and \eqref{1509091200D}.

The unique element $\vectrho^{\dagger}\in \vect{R}^{(\vect{m}^{\flat})}$ corresponding to $\vectrho\in\vect{R}^{(\vect{m}^{\flat})}$ can be characterized as follows.
For $\vectrho'\in\vect{R}^{(\vect{m}^{\flat})}$, we have $\vectrho'=\vectrho^{\dagger}$ if and only if there is some  $k\in\mathbb{Z}$ with
\begin{equation}\label{1708201840}
\forall\,\mathsf{i} \in\{1,\ldots,\mathsf{d}\}:\quad   k-2\rho'_{\mathsf{i}}m^{\sharp}_{\mathsf{i}}-\kappa_{\mathsf{i}}\equiv 2\rho_{\mathsf{i}}m^{\sharp}_{\mathsf{i}}+\kappa_{\mathsf{i}}\mod 2m_{\mathsf{i}}.
\end{equation}
\end{proposition}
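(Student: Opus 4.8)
The plan is to unravel both directions of the equivalence using the defining congruences \eqref{1708242340} and \eqref{1708242341} together with the multiplicative structure \eqref{1509091200A}--\eqref{1509091200D}. First I would note that, since $\rho'$ and $\rho^\dagger$ both live in $\vect{R}^{(\vect{m}^{\flat})}$, the equality $\vectrho'=\vectrho^{\dagger}$ is equivalent to the coordinatewise congruences $\rho'_{\mathsf{i}}\equiv\rho^{\dagger}_{\mathsf{i}}\bmod m^{\flat}_{\mathsf{i}}$, hence to $\rho'_{\mathsf{i}}\equiv (l^{\dagger}-\kappa_{\mathsf{i}})/m^{\sharp}_{\mathsf{i}}-\rho_{\mathsf{i}}\bmod m^{\flat}_{\mathsf{i}}$ for all $\mathsf{i}$, or after multiplying by $m^{\sharp}_{\mathsf{i}}$ and using \eqref{1509091200A}, to $(\rho'_{\mathsf{i}}+\rho_{\mathsf{i}})m^{\sharp}_{\mathsf{i}}\equiv l^{\dagger}-\kappa_{\mathsf{i}}\bmod m_{\mathsf{i}}$.

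For the forward direction, assume $\vectrho'=\vectrho^{\dagger}$. I would take $k=2l^{\dagger}$. Then $k-2\rho'_{\mathsf{i}}m^{\sharp}_{\mathsf{i}}-\kappa_{\mathsf{i}}-(2\rho_{\mathsf{i}}m^{\sharp}_{\mathsf{i}}+\kappa_{\mathsf{i}}) = 2\big(l^{\dagger}-(\rho'_{\mathsf{i}}+\rho_{\mathsf{i}})m^{\sharp}_{\mathsf{i}}-\kappa_{\mathsf{i}}\big)$, and by the reformulation just obtained this is $\equiv 0\bmod 2m_{\mathsf{i}}$, which is exactly \eqref{1708201840}. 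For the converse, suppose some $k\in\mathbb{Z}$ satisfies \eqref{1708201840}. Reducing \eqref{1708201840} modulo $2$ gives $k\equiv 2\kappa_{\mathsf{i}}\equiv 0\bmod 2$, so $k=2k'$ for some integer $k'$; then \eqref{1708201840} becomes $k'-(\rho'_{\mathsf{i}}+\rho_{\mathsf{i}})m^{\sharp}_{\mathsf{i}}-\kappa_{\mathsf{i}}\equiv 0\bmod m_{\mathsf{i}}$, i.e. $k'\equiv (\rho'_{\mathsf{i}}+\rho_{\mathsf{i}})m^{\sharp}_{\mathsf{i}}+\kappa_{\mathsf{i}}\bmod m_{\mathsf{i}}$ for all $\mathsf{i}$. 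Reducing this modulo $m^{\sharp}_{\mathsf{i}}$ (which divides $m_{\mathsf{i}}$) kills the $m^{\sharp}_{\mathsf{i}}$-term and yields $k'\equiv\kappa_{\mathsf{i}}\bmod m^{\sharp}_{\mathsf{i}}$ for all $\mathsf{i}$; by the Chinese remainder theorem and uniqueness in \eqref{1708242340}, this forces $k'\equiv l^{\dagger}\bmod \p[\vect{m}^{\sharp}]=\lcm[\vect{m}]$, hence $k'\equiv l^{\dagger}\bmod m^{\sharp}_{\mathsf{i}}$ and in particular $k'\equiv l^{\dagger}\bmod m_{\mathsf{i}}$ is not quite what I want, so instead I substitute $k'\equiv l^{\dagger}+(\text{multiple of }m^{\sharp}_{\mathsf{i}})$ back — more carefully, I apply Lemma~\ref{1708311652} style reasoning: from $k'-(\rho'_{\mathsf{i}}+\rho_{\mathsf{i}})m^{\sharp}_{\mathsf{i}}\equiv\kappa_{\mathsf{i}}\bmod m_{\mathsf{i}}$ and $l^{\dagger}\equiv\kappa_{\mathsf{i}}\bmod m^{\sharp}_{\mathsf{i}}$, I get $(l^{\dagger}-\kappa_{\mathsf{i}})/m^{\sharp}_{\mathsf{i}}$ is a well-defined integer and $(\rho'_{\mathsf{i}}+\rho_{\mathsf{i}})m^{\sharp}_{\mathsf{i}}\equiv k'-\kappa_{\mathsf{i}}\bmod m_{\mathsf{i}}$; combining with $k'\equiv l^{\dagger}\bmod\lcm[\vect{m}]$ (so $k'\equiv l^{\dagger}\bmod m^{\sharp}_{\mathsf{i}}$, but I need it mod $m_{\mathsf{i}}$). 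The clean route is: $k'$ and $l^{\dagger}$ agree mod every $m^{\sharp}_{\mathsf{i}}$, so $m^{\sharp}_{\mathsf{i}}\mid k'-\kappa_{\mathsf{i}}$ and $(\rho'_{\mathsf{i}}+\rho_{\mathsf{i}})\equiv (k'-\kappa_{\mathsf{i}})/m^{\sharp}_{\mathsf{i}}\bmod m^{\flat}_{\mathsf{i}}$; and $(k'-\kappa_{\mathsf{i}})/m^{\sharp}_{\mathsf{i}}\equiv (l^{\dagger}-\kappa_{\mathsf{i}})/m^{\sharp}_{\mathsf{i}}\bmod m^{\flat}_{\mathsf{i}}$ because $k'\equiv l^{\dagger}\bmod m^{\sharp}_{\mathsf{i}}m^{\flat}_{\mathsf{i}}=m_{\mathsf{i}}$, which follows from $k'\equiv l^{\dagger}\bmod\lcm[\vect{m}]$ only if $m_{\mathsf{i}}\mid\lcm[\vect{m}]$ — which is true. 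Hence $\rho'_{\mathsf{i}}\equiv (l^{\dagger}-\kappa_{\mathsf{i}})/m^{\sharp}_{\mathsf{i}}-\rho_{\mathsf{i}}\bmod m^{\flat}_{\mathsf{i}}$, i.e. $\vectrho'=\vectrho^{\dagger}$.

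The main obstacle is the bookkeeping in the converse direction: one must correctly pass between congruences modulo $2m_{\mathsf{i}}$, $m_{\mathsf{i}}$, $m^{\sharp}_{\mathsf{i}}$ and $m^{\flat}_{\mathsf{i}}$, repeatedly using that $m_{\mathsf{i}}=m^{\flat}_{\mathsf{i}}m^{\sharp}_{\mathsf{i}}$ with $\gcd(m^{\flat}_{\mathsf{i}},m^{\sharp}_{\mathsf{i}})=1$ to split a congruence mod $m_{\mathsf{i}}$ into one mod $m^{\sharp}_{\mathsf{i}}$ and one mod $m^{\flat}_{\mathsf{i}}$, and that $m_{\mathsf{i}}\mid\lcm[\vect{m}]=\p[\vect{m}^{\sharp}]$ so that a congruence mod $\lcm[\vect{m}]$ descends to one mod $m_{\mathsf{i}}$. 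Properties \eqref{1509091200C} and the uniqueness clause of the Chinese remainder theorem are what pin down $k'$ modulo $\lcm[\vect{m}]$ from the system $k'\equiv\kappa_{\mathsf{i}}\bmod m^{\sharp}_{\mathsf{i}}$, which is the step that ties $k$ back to $l^{\dagger}$. Everything else is a direct substitution of \eqref{1708242340} and \eqref{1708242341}; I would organize the proof as the two implications above, with the observation in the first paragraph used as the common reformulation of $\vectrho'=\vectrho^{\dagger}$ at both ends.
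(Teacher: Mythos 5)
Your proof is correct, and the forward direction (take $k=2l^{\dagger}$ and check \eqref{1708201840} coordinatewise) is exactly the paper's. For the converse you take a genuinely different route. The paper combines \eqref{1708201840} with \eqref{1708242341} to get $k/2-l^{\dagger}\equiv(\rho'_{\mathsf{i}}-\rho^{\dagger}_{\mathsf{i}})m^{\sharp}_{\mathsf{i}}\pmod{m_{\mathsf{i}}}$ and then simply invokes Lemma~\ref{1708311652}, whose proof in the paper goes through prime factorizations. You instead re-derive the needed divisibility from scratch: after observing $k=2k'$, you reduce $k'\equiv(\rho'_{\mathsf{i}}+\rho_{\mathsf{i}})m^{\sharp}_{\mathsf{i}}+\kappa_{\mathsf{i}}\pmod{m_{\mathsf{i}}}$ modulo $m^{\sharp}_{\mathsf{i}}$, use the Chinese remainder theorem with \eqref{1509091200C} and the uniqueness in \eqref{1708242340} to pin $k'\equiv l^{\dagger}\pmod{\p[\vect{m}^{\sharp}]}$, descend to a congruence mod $m_{\mathsf{i}}$ via \eqref{1509091200D}, and cancel the factor $m^{\sharp}_{\mathsf{i}}$ using $m_{\mathsf{i}}=m^{\sharp}_{\mathsf{i}}m^{\flat}_{\mathsf{i}}$ to land on \eqref{1708242341}. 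All of these steps check out. What the paper's route buys is brevity, since Lemma~\ref{1708311652} is already proved and reused elsewhere (Proposition~\ref{1509221521}, Theorem~\ref{201708201819}); what your route buys is self-containedness, and in fact your CRT argument yields an alternative, more elementary proof of the instance of Lemma~\ref{1708311652} that is needed here. The only weakness is presentational: the hesitant middle passage of your converse (``is not quite what I want, so instead\ldots'') should be deleted in favour of the final ``clean route,'' which is the complete and correct argument.
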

\begin{proof} If $\vectrho'=\vectrho^{\dagger}$, then \eqref{1708201840} with $k=2l^{\dagger}$. Suppose \eqref{1708201840}. 
Then, $k$ is even, and \eqref{1708242341}, \eqref{1708201840} imply 
$k/2-l^{\dagger}\equiv (\rho'_{\mathsf{i}}-\rho^{\dagger}_{\mathsf{i}})m^{\sharp}_{\mathsf{i}}\tmod m_{\mathsf{i}}$ for all $\mathsf{i}$. 
Lemma \ref{1708311652} now yields $\vectrho'=\vectrho^{\dagger}$.
\end{proof}

\begin{theorem} \label{201708201819} Let  $\vect{m},\vect{m}^{\sharp},\vect{m}^{\flat}\in\mathbb{N}^{\mathsf{d}}$  satisfy \eqref{1509091200A}, 
\eqref{1509091200B}, \eqref{1509091200C} and \eqref{1509091200D}.

\begin{enumerate}[a)]
\item  For $\vectrho,\vectrho'\in\vect{R}^{(\vect{m}^{\flat})}$, we have 
\begin{align}  \label{1710311443}\vect{\ell}^{(\vect{m})}_{(2\rho'_1m^{\sharp}_1+\kappa_1,\ldots,2\rho'_{\mathsf{d}}m^{\sharp}_{\mathsf{d}}+\kappa_{\mathsf{d}})}&=\vect{\ell}^{(\vect{m})}_{(2\rho_1m^{\sharp}_1+\kappa_1,\ldots,2\rho_{\mathsf{d}}m^{\sharp}_{\mathsf{d}}+\kappa_{\mathsf{d}})}\quad\Longleftrightarrow\quad \vectrho'=\vectrho,\\
\label{1708221847} \vect{\ell}^{(\vect{m})}_{(2\rho'_1m^{\sharp}_1+\kappa_1,\ldots,2\rho'_{\mathsf{d}}m^{\sharp}_{\mathsf{d}}+\kappa_{\mathsf{d}})}&\simeq\vect{\ell}^{(\vect{m})}_{(2\rho_1m^{\sharp}_1+\kappa_1,\ldots,2\rho_{\mathsf{d}}m^{\sharp}_{\mathsf{d}}+\kappa_{\mathsf{d}})}\quad\Longleftrightarrow\quad \vectrho'\in\{\vectrho,\vectrho^\dagger\}.
\end{align}

\item  For $\vectrho\in\vect{R}^{(\vect{m}^{\flat})}$, the curve
$\vect{\ell}^{(\vect{m})}_{\left(2\rho_1m^{\sharp}_1+\kappa_1,\ldots,2\rho_{\mathsf{d}}m^{\sharp}_{\mathsf{d}}+\kappa_{\mathsf{d}}\right)}$ is degenerate $\Longleftrightarrow$ $\vectrho^\dagger=\vectrho.$\\
The classes in $[\vect{\mathfrak{L}}^{(\vect{m}^{\sharp}, \, \vect{m}^{\flat})}_{\vect{\kappa}}]$ of the degenerate curves contain precisely one element, 
the classes of the non-degenerate curves contain precisely two elements.
\item  We have $\#\vect{\mathfrak{L}}^{(\vect{m}^{\sharp}, \, \vect{m}^{\flat})}_{\vect{\kappa}}=\p[\vect{m}^{\flat}]$, and
the number  of degenerate curves 
in  $\vect{\mathfrak{L}}^{(\vect{m}^{\sharp}, \, \vect{m}^{\flat})}_{\vect{\kappa}}$ is 
\begin{equation}\label{1708231441} N_{\mathrm{deg}}  = \dfrac1{2}\#\I^{(\vect{m})}_{\vect{\kappa},\emptyset} = \left\{ 
\begin{array}{cl}
1 & \text{if $ M_0 = \emptyset$}, \\
2^{\# (K_0 \cap M_0) - 1} & \text{if $K_{0} \cap M_0 \neq \emptyset$ and $K_1 \cap M_0 = \emptyset$}, \\
2^{\# (K_1 \cap M_0) - 1} & \text{if $K_{0} \cap M_0 = \emptyset$ and $K_1 \cap M_0 \neq \emptyset$}, \\
0 & \text{if $K_{0} \cap M_0 \neq \emptyset$ and $K_1 \cap M_0 \neq \emptyset$},
\end{array} \right.
\end{equation}
where $\I^{(\vect{m})}_{\vect{\kappa},\emptyset}$ is defined in \eqref{1609080206} with $\mathsf{M}=\emptyset$ and for $\mathfrak{r}\in\{0,1\}$ we denote 
\[
M_{\mathfrak{r}} = \{\,\mathsf{i} \in \{1,\ldots, \mathsf{d}\} \ | \ m_{\mathsf{i}} \equiv \mathfrak{r} \tmod 2 \,\},\quad
K_{\mathfrak{r}} = \{\,\mathsf{i} \in \{1,\ldots, \mathsf{d}\} \ | \ \; \kappa_{\mathsf{i}} \equiv \mathfrak{r} \tmod 2 \,\}
.\]
\item Using \eqref{1708231441}, the cardinality of $[\vect{\mathfrak{L}}^{(\vect{m}^{\sharp}, \, \vect{m}^{\flat})}_{\vect{\kappa}}]$ is 
$\# [\vect{\mathfrak{L}}^{(\vect{m}^{\sharp}, \, \vect{m}^{\flat})}_{\vect{\kappa}}]  = \dfrac12(\p[\vect{m}^{\flat}]+N_{\mathrm{deg}}).$
\end{enumerate}
\end{theorem}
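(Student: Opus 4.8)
The plan is to establish parts a)--d) in order, since each builds on the previous one. For part a), the key tool is Proposition \ref{201708201848}: to prove \eqref{1710311443}, I would first observe that $\vect{\ell}^{(\vect{m})}_{\vect{\xi}'} = \vect{\ell}^{(\vect{m})}_{\vect{\xi}}$ with $\xi'_{\mathsf{i}} = 2\rho'_{\mathsf{i}}m^{\sharp}_{\mathsf{i}}+\kappa_{\mathsf{i}}$ and $\xi_{\mathsf{i}} = 2\rho_{\mathsf{i}}m^{\sharp}_{\mathsf{i}}+\kappa_{\mathsf{i}}$ holds if and only if $\xi'_{\mathsf{i}}\equiv\xi_{\mathsf{i}}\bmod 2m_{\mathsf{i}}$ for all $\mathsf{i}$, which forces $2(\rho'_{\mathsf{i}}-\rho_{\mathsf{i}})m^{\sharp}_{\mathsf{i}}\equiv 0 \bmod 2m_{\mathsf{i}}$ and hence, by Lemma \ref{1708311652} applied to $\vectrho'-\vectrho$, gives $\rho'_{\mathsf{i}}\equiv\rho_{\mathsf{i}}\bmod m^{\flat}_{\mathsf{i}}$, i.e. $\vectrho'=\vectrho$ in $\vect{R}^{(\vect{m}^{\flat})}$. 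For the equivalence \eqref{1708221847}, I recall the basic fact (from \cite{DenckerErb2015a}) that two Lissajous curves $\vect{\ell}^{(\vect{m})}_{\vect{\xi}'}$ and $\vect{\ell}^{(\vect{m})}_{\vect{\xi}}$ have the same image if and only if $\vect{\ell}^{(\vect{m})}_{\vect{\xi}'} = \vect{\ell}^{(\vect{m})}_{\vect{\xi}}(\,\cdot\, - t')$ for some $t'$ or $\vect{\ell}^{(\vect{m})}_{\vect{\xi}'} = \vect{\ell}^{(\vect{m})}_{\vect{\xi}}(\,t'-\cdot\,)$ for some $t'$ (time-reversal symmetry of the curve); translating the time-shift condition recovers \eqref{1710311443} ($\vectrho'=\vectrho$), while the time-reversal condition translates, via Proposition \ref{201708201848} and \eqref{1708201840}, precisely into $\vectrho'=\vectrho^{\dagger}$.

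For part b), the degeneracy characterization follows by specializing part a): the curve $\vect{\ell}^{(\vect{m})}_{\vect{\xi}}$ is degenerate exactly when $\vect{\ell}^{(\vect{m})}_{\vect{\xi}}(\,\cdot\,-t') = \vect{\ell}^{(\vect{m})}_{\vect{0},\vect{u}}$ for some $t',\vect{u}$, which is the condition that $\vect{\ell}^{(\vect{m})}_{\vect{\xi}}$ coincides with its own time-reversal up to a shift, i.e. $\vect{\ell}^{(\vect{m})}_{\vect{\xi}'}\simeq\vect{\ell}^{(\vect{m})}_{\vect{\xi}}$ forces $\vectrho'=\vectrho$ whenever time-reversal is already a time-shift symmetry — concretely $\vectrho^{\dagger}=\vectrho$. (I would double-check the correspondence between ``$\vect{\ell}(\,\cdot\,-t')=\vect{\ell}^{(\vect{m})}_{\vect{0},\vect{u}}$'' and the fixed-point condition $\vectrho^{\dagger}=\vectrho$ by plugging $\vect{\xi}=\vect{0}$ into \eqref{1708242340}--\eqref{1708242341}, which gives $l^{\dagger}=0$ and $\rho^{\dagger}_{\mathsf{i}}\equiv-\rho_{\mathsf{i}}\bmod m^{\flat}_{\mathsf{i}}$, consistent with the reflection symmetry of $\vect{\ell}^{(\vect{m})}_{\vect{0},\vect{u}}$.) The claim about class sizes is then immediate from \eqref{1708221847}: a class is $\{\vectrho,\vectrho^{\dagger}\}$, which has one element if $\vectrho^{\dagger}=\vectrho$ and two otherwise.

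Part c) is the computational heart. The identity $\#\vect{\mathfrak{L}}^{(\vect{m}^{\sharp},\,\vect{m}^{\flat})}_{\vect{\kappa}} = \p[\vect{m}^{\flat}]$ follows from \eqref{1710311443}: the map $\vectrho\mapsto\vect{\ell}^{(\vect{m})}_{(2\rho_1m^{\sharp}_1+\kappa_1,\ldots)}$ is a bijection from $\vect{R}^{(\vect{m}^{\flat})}$ onto $\vect{\mathfrak{L}}^{(\vect{m}^{\sharp},\,\vect{m}^{\flat})}_{\vect{\kappa}}$, and $\#\vect{R}^{(\vect{m}^{\flat})}=\p[\vect{m}^{\flat}]$. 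For $N_{\mathrm{deg}}$, by part b) this counts the fixed points of the involution $\vectrho\mapsto\vectrho^{\dagger}$ on $\vect{R}^{(\vect{m}^{\flat})}$, i.e. the $\vectrho$ with $2\rho_{\mathsf{i}}\equiv (l^{\dagger}-\kappa_{\mathsf{i}})/m^{\sharp}_{\mathsf{i}}\bmod m^{\flat}_{\mathsf{i}}$ for all $\mathsf{i}$; this congruence has a solution $\rho_{\mathsf{i}}$ iff $(l^{\dagger}-\kappa_{\mathsf{i}})/m^{\sharp}_{\mathsf{i}}$ is even when $m^{\flat}_{\mathsf{i}}$ is even (and always otherwise), and the number of solutions is $1$ if $m^{\flat}_{\mathsf{i}}$ is odd and $2$ (or $0$) if $m^{\flat}_{\mathsf{i}}$ is even, multiplied over $\mathsf{i}$. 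I would then connect this count to $\#\I^{(\vect{m})}_{\vect{\kappa},\emptyset}$ via \eqref{1509221441} with $\mathsf{M}=\emptyset$ — the set $\I^{(\vect{m})}_{\vect{\kappa},\emptyset}$ consists of corner-type indices $i_{\mathsf{i}}\in\{0,m_{\mathsf{i}}\}$ — and carefully parse the parity cases to get the four-way split in \eqref{1708231441}, keeping track of $M_0$ (even $m_{\mathsf{i}}$), $K_0,K_1$. The main obstacle is precisely this case analysis: establishing that the solvability/count of the reflection-fixed-point congruences matches the right-hand side of \eqref{1708231441}, in particular showing the parity obstruction (the $0$ case) corresponds exactly to $K_0\cap M_0\neq\emptyset$ \emph{and} $K_1\cap M_0\neq\emptyset$, and that $l^{\dagger}\bmod 2$ interacts correctly with the $\kappa_{\mathsf{i}}$ parities on the even coordinates. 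Finally, part d) is a one-line orbit count: the involution $\vectrho\mapsto\vectrho^{\dagger}$ has $N_{\mathrm{deg}}$ fixed points and $(\p[\vect{m}^{\flat}]-N_{\mathrm{deg}})/2$ two-element orbits, so $\#[\vect{\mathfrak{L}}^{(\vect{m}^{\sharp},\,\vect{m}^{\flat})}_{\vect{\kappa}}] = N_{\mathrm{deg}} + (\p[\vect{m}^{\flat}]-N_{\mathrm{deg}})/2 = \tfrac12(\p[\vect{m}^{\flat}]+N_{\mathrm{deg}})$.
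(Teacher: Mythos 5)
Your overall architecture matches the paper's, and several pieces are sound: your direct derivation of \eqref{1710311443} from the congruences $\xi'_{\mathsf{i}}\equiv\xi_{\mathsf{i}}\tmod 2m_{\mathsf{i}}$ together with Lemma \ref{1708311652} is correct (and slightly more direct than the paper, which deduces \eqref{1710311443} from \eqref{1708221847} only at the end of its proof), and your orbit count for d) is exactly the paper's. The decisive gap is in the forward direction of \eqref{1708221847}: you invoke as a ``basic fact'' that two Lissajous curves with the same image must be related by a global time shift or a global time reversal. That assertion is precisely the hard content of part a), and it cannot be imported from \cite{DenckerErb2015a}, which treats pairwise relatively prime frequencies (there $\vect{m}^{\flat}=\vect{1}$ and the family contains a single curve, so the question never arises in this form). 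A priori, equality of images only gives, for each sample parameter $t^{(\vect{m})}_{h}$ on one curve, a matching parameter $t^{(\vect{m})}_{k_h}$ on the other together with a sign vector $\vect{v}_h\in\{-1,1\}^{\mathsf{d}}$ that may vary with $h$; ruling out a genuinely $h$-dependent mixture of shifts and reflections is exactly what the paper's proof does, by induction on $\mathsf{d}$ with a base case $\mathsf{d}=2$ in which the exceptional patterns of $(v_{h,1},v_{h,2})$ are shown to force $\lcm\{m_1,m_2\}\leq 2$ and are then disposed of by hand. Without an argument of this kind, your derivation of $\vectrho'\in\{\vectrho,\vectrho^{\dagger}\}$ assumes what is to be proved.

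Second, in c) you explicitly leave the verification of \eqref{1708231441} as a case analysis still to be done, so that part is not yet a proof. Note also that your route (counting fixed points of $\vectrho\mapsto\vectrho^{\dagger}$ via the congruences $2\rho_{\mathsf{i}}\equiv(l^{\dagger}-\kappa_{\mathsf{i}})/m^{\sharp}_{\mathsf{i}}\tmod m^{\flat}_{\mathsf{i}}$) expresses the answer in terms of the parities of $m^{\flat}_{\mathsf{i}}$ and of $l^{\dagger}$, which depend on the chosen decomposition, so you would still have to show that the count collapses to the stated formula involving only $M_0$, $K_0$, $K_1$. The paper sidesteps this with a geometric argument: a curve of the family is degenerate iff its image meets $\{-1,1\}^{\mathsf{d}}$, each degenerate curve passes through exactly two such corners, and distinct curves hit disjoint corner sets; hence $N_{\mathrm{deg}}=\tfrac12\#\I^{(\vect{m})}_{\vect{\kappa},\emptyset}$, after which the four-case formula falls out of \eqref{1509221441}. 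Adopting that route would give you the first equality in \eqref{1708231441} for free and reduce the case analysis to the already established cardinality formula.
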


\begin{proof} Let $\mathsf{d}=1$. Then, $m^\flat=1$, $\vect{R}^{(\vect{m}^{\flat})}=\{\vect{0}\}$, and
 $\vect{\mathfrak{L}}^{(\vect{m}^{\sharp}, \, \vect{m}^{\flat})}_{\vect{\kappa}}$ contains precisely one element that is degenerate. Thus, a)-d) are trivially satisfied.  We suppose  $\mathsf{d}\geq 2$.

\medskip

a) Using \eqref{1708201840} with $\vectrho'=\vectrho^{\dagger}$, $\cos(-s)=\cos s$, and $t'=-t^{(\vect{m})}_{k}$, we obtain 
\[\vect{\ell}^{(\vect{m})}_{(2\rho^{\dagger}_1m^{\sharp}_1+\kappa_1,\ldots,2\rho^{\dagger}_{\mathsf{d}}m^{\sharp}_{\mathsf{d}}+\kappa_{\mathsf{d}})}\!(t-t')=
\vect{\ell}^{(\vect{m})}_{(-2\rho_1m^{\sharp}_1-\kappa_1,\ldots,-2\rho_{\mathsf{d}}m^{\sharp}_{\mathsf{d}}-\kappa_{\mathsf{d}})}\!(t) = 
\vect{\ell}^{(\vect{m})}_{(2\rho_1m^{\sharp}_1+\kappa_1,\ldots,2\rho_{\mathsf{d}}m^{\sharp}_{\mathsf{d}}+\kappa_{\mathsf{d}})}\!(-t).\]
Thus, we have the statement ``$\Longleftarrow$'' of \eqref{1708221847}. We show the statement ``$\Longrightarrow$'' of \eqref{1708221847}.\\
Let $\mathsf{d}\geq 2$, $\vectrho,\vectrho'\in\vect{R}^{(\vect{m}^{\flat})}$, and suppose the equivalence on the left hand side of~\eqref{1708221847}. 
We use mathematical induction for $\mathsf{d}\geq 2$ and  start with the base case $\mathsf{d}=2$. 

Let $(\rho_1,\rho_2),(\rho'_1,\rho'_2)\in\vect{R}^{(m^{\flat}_1,m^{\flat}_2)}$, $\vect{\ell}=\vect{\ell}^{(m_1,m_2)}_{(2\rho_1m^{\sharp}_1+\kappa_1,2\rho_2m^{\sharp}_2+\kappa_2)}$, $\vect{\ell}'=\vect{\ell}^{(m_1,m_2)}_{(2\rho'_1m^{\sharp}_1+\kappa_1,2\rho'_2m^{\sharp}_2+\kappa_2)}$.

We assume $\vect{\ell}'\equiv\vect{\ell}.$ Then, 
in particular, for $h\in\mathbb{Z}$ there exists $t'_h\in\mathbb{R}$ such that $\vect{\ell}'(t'_h)=\vect{\ell}(t^{(\vect{m})}_h).$ As in the proof of \cite[Theorem 1.4]{DenckerErb2015a}, we conclude that in this case $t'_h$ has to be of the form  $t'_h=t^{(\vect{m})}_{k_h}$ for some $k_h\in\mathbb{Z}$. Further, for $k_h\in\mathbb{Z}$, we have $\vect{\ell}'(t^{(\vect{m})}_{k_h})=\vect{\ell}(t^{(\vect{m})}_h)$  if and only if there exists  $(v_{h,1},v_{h,2})\in\{-1,1\}^2$ such that
\begin{equation}\label{1708221931}
\begin{split}
 k_h-2\rho'_1m^{\sharp}_1-\kappa_1&\equiv v_{h,1}(h-2\rho_1m^{\sharp}_1-\kappa_1) \mod 2m_1,\\
 k_h-2\rho'_2m^{\sharp}_2-\kappa_2&\equiv v_{h,2}(h-2\rho_2m^{\sharp}_2-\kappa_2) \mod 2m_2.
\end{split}
\end{equation}
 Let $(v_{h,1},v_{h,2})=(-1,-1)$. Then \eqref{1708221931} yields \eqref{1708201840} with $k=k_h+h$. Hence, Proposition~\ref{201708201848} implies $(\rho'_1,\rho'_2)=(\rho^\dagger_1,\rho^\dagger_2)$. If $(v_{h,1},v_{h,2})=(1,1)$,  then Lemma \ref{1708311652} implies $(\rho'_1,\rho'_2)=(\rho_1,\rho_2)$.
Now, we assume $v_{h,2}\neq v_{h,1}$ for all $h\in\mathbb{Z}$, and consider two cases.

(i) Let $(v_{h'+1,1},v_{h'+1,2})=  (v_{h',1},v_{h',2})$ for some $h'\in\mathbb{Z}$. Using \eqref{1708221931}, we conclude
\[k_{h'+1}-k_{h'}\equiv v_{h',1}\mod 2m_1,\qquad
 k_{h'+1}-k_{h'}\equiv  v_{h',2}=-v_{h',1}\mod 2m_2,\]
hence $-1\equiv 1 \tmod\lcm\{2m_1,2m_2\}$, and, thus, $\lcm\{m_1,m_2\}=1$, $\vect{m}^{\flat}=(1,1)$, $\vect{R}^{(\vect{m}^{\flat})}=\{(0,0)\}$. Therefore, we trivially have $(\rho'_1,\rho'_2)=(0,0)=(\rho_1,\rho_2)$. 

(ii) Let $(v_{h+1,1},v_{h+1,2})\neq (v_{h,1},v_{h,2})$ for all $h$. Then, since $v_{h,2}\neq v_{h,1}$ for all $h\in\mathbb{Z}$, 
we have $(v_{h+1,1},v_{h+1,2})= (-v_{h,1},-v_{h,2})$ for all $h$. Thus, in particular we have   \[(v_{2,1},v_{2,2})=(-v_{1,1},-v_{1,2})=(v_{0,1},v_{0,2}).\] 
As in case i),  using \eqref{1708221931},  we get  $-2\equiv 2 \tmod\lcm\{2m_1,2m_2\}$. Therefore, we have $\lcm\{m_1,m_2\}\in\{1,2\}.$ If  $\lcm\{m_1,m_2\}=1$,  
then  $(\rho'_1,\rho'_2)=(0,0)=(\rho_1,\rho_2)$. Let $\lcm\{m_1,m_2\}=2$, i.e. $\vect{m}^{\flat}\in\{(1,2),(2,1)\}$, and without restriction~$\vect{m}^{\flat}=(1,2)$.  
We have that $m^{\sharp}_2$ is odd and $\vect{R}^{(\vect{m}^{\flat})}=\{(0,0),(0,1)\}$. 
It is easy to verify the following: if $\kappa_2\equiv\kappa_1\tmod 2$, then  $(0,0)^{\dagger}=(0,0)$, $(0,1)^{\dagger}=(0,1)$, and the 
curves $\vect{\ell}^{(m_1,m_2)}_{(2\rho_1m^{\sharp}_1+\kappa_1,2\rho_2m^{\sharp}_2+\kappa_2)}$ corresponding to $\vectrho=(0,0)$ and to
$\vectrho=(0,1)$ are not equivalent; if  $\kappa_2\not\equiv\kappa_1\tmod 2$, then  $(0,0)^{\dagger}=(0,1)$ and $(0,1)^{\dagger}=(0,0)$. 

\medskip

In total, we obtain in all cases that $(\rho'_1,\rho'_2)\in\{(\rho_1,\rho_2),(\rho^\dagger_1,\rho^\dagger_2)\}$ if $\vect{\ell}'=\vect{\ell}$, i.e. the assertion for $\mathsf{d}=2$. 
We assume that the assertion holds for $\mathsf{d}-1$ instead of  $\mathsf{d}\geq 3$.

We suppose that the equivalence on the left hand side of~\eqref{1708221847} holds for $\vectrho'\neq\vectrho$. We will proof the induction hypothesis by showing 
$\vectrho'=\vectrho^{\dagger}$. Since $\vectrho'\neq\vectrho$, we have $\rho'_{\mathsf{i}}\neq\rho_{\mathsf{i}}$ for some~$\mathsf{i}$. Assume without restriction that $\rho'_2\neq\rho_2$. 
Further, 
we consider $(\rho_1,\ldots,\rho_{\mathsf{d}-1})^{\dagger}$ corresponding to $(\rho_1,\ldots,\rho_{\mathsf{d}-1})$ according to \eqref{1708242340}, \eqref{1708242341} with  $(m_1,\ldots,m_{\mathsf{d}-1})$,  $(m^{\sharp}_1,\ldots,m^{\sharp}_{\mathsf{d}-1})$, $(m^{\flat}_1,\ldots,m^{\flat}_{\mathsf{d}-1})$,  $(\kappa_1,\ldots,\kappa_{\mathsf{d}-1})$ instead of
$\vect{m},\vect{m}^{\sharp}$, $\vect{m}^{\flat}$, $\vect{\kappa}$. 
By Proposition~\ref{201708201848},  we have  \eqref{1708242341} for every $l^{\dagger}\in\mathbb{Z}$ satisfying~\eqref{1708242340}. We conclude
\begin{equation}\label{1708231307}(\rho_1,\ldots,\rho_{\mathsf{d}-1})^{\dagger}=(\rho^{\dagger}_1,\ldots,\rho^{\dagger}_{\mathsf{d}-1})\quad\text{with}\quad (\rho^{\dagger}_1,\ldots,\rho^{\dagger}_{\mathsf{d}})=\vectrho^{\dagger}.
\end{equation}

\noindent Denoting  $\operatorname{proj}_{\mathbb{R}^{\mathsf{d}-1}}\vect{x}=(x_1,\ldots,x_{\mathsf{d}-1})$ for $\vect{x}=(x_1,\ldots,x_{\mathsf{d}-1},x_{\mathsf{d}})\in\mathbb{R}^{\mathsf{d}},$ we can write
\[ \operatorname{proj}_{\mathbb{R}^{\mathsf{d}-1}} \vect{\ell}^{(\vect{m})}_{(2\rho_1m^{\sharp}_1+\kappa_1,\ldots,2\rho_{\mathsf{d}}m^{\sharp}_{\mathsf{d}}+\kappa_{\mathsf{d}})}(t/m^{\sharp}_{\mathsf{d}})
=\vect{\ell}^{(m_1,\ldots,m_{\mathsf{d}-1})}_{(2\rho_1m^{\sharp}_1+\kappa_1,\ldots,2\rho_{\mathsf{d}-1}m^{\sharp}_{\mathsf{d}-1}+\kappa_{\mathsf{d}-1})}(t),\qquad t\in\mathbb{R},\]
and we have an analogous identity for the Lissajous curve with $\rho'_{\mathsf{i}}$ instead of $\rho_{\mathsf{i}}$.
Therefore, the equivalence on the left hand side of~\eqref{1708221847} implies 
\[\vect{\ell}^{(m_1,\ldots,m_{\mathsf{d}-1})}_{(2\rho'_1m^{\sharp}_1+\kappa_1,\ldots,2\rho'_{\mathsf{d}-1}m^{\sharp}_{\mathsf{d}-1}+\kappa_{\mathsf{d}-1})}\simeq\vect{\ell}^{(m_1,\ldots,m_{\mathsf{d}-1})}_{(2\rho_1m^{\sharp}_1+\kappa_1,\ldots,2\rho_{\mathsf{d}-1}m^{\sharp}_{\mathsf{d}-1}+\kappa_{\mathsf{d}-1})}.\]
We have $(\rho'_1,\ldots,\rho'_{\mathsf{d}-1})\in\{(\rho_1,\ldots,\rho_{\mathsf{d}-1}),(\rho^{\dagger}_1,\ldots,\rho^{\dagger}_{\mathsf{d}-1})\}$ by the
 induction assumption and by \eqref{1708231307}. Since $\rho'_2\neq\rho_2$, this implies
$(\rho'_1,\ldots,\rho'_{\mathsf{d}-1})=(\rho^{\dagger}_1,\ldots,\rho^{\dagger}_{\mathsf{d}-1})$. 
In the very same way we conclude that $(\rho'_2,\ldots,\rho'_{\mathsf{d}})=(\rho^{\dagger}_2,\ldots,\rho^{\dagger}_{\mathsf{d}})$ holds. 
Combining these two identites, we obtain $\vectrho'=\vectrho^{\dagger}$.

\medskip

b) As in the proof of \cite[Theorem 1.4]{DenckerErb2015a}, the definition $\vect{\ell}(\,\cdot\,-t') = \vect{\ell}^{(\vect{m})}_{\vect{0},\vect{u}}$ of degeneracy implies $t'$ to be of the form $t'=-t^{(\vect{m})}_{h}$ 
for some $h\in\mathbb{Z}$. Further, $\vect{\ell}(\,\cdot\,-t') = \vect{\ell}^{(\vect{m})}_{\vect{0},\vect{u}}$ with $t'=-t^{(\vect{m})}_{h}$ 
and some $\vect{u}\in\{-1,1\}^{\mathsf{d}}$ is equivalent to 
\begin{equation}\label{1708231405}
\forall\,\mathsf{i} \in\{1,\ldots,\mathsf{d}\}:\quad  h\equiv 2\rho_{\mathsf{i}}m^{\sharp}_{\mathsf{i}}+\kappa_{\mathsf{i}} \mod m_{\mathsf{i}}.
\end{equation} 
Now, if $\vectrho^\dagger=\vectrho$, then the integer $k$ in \eqref{1708201840} with $\vectrho'=\vectrho^\dagger$ is even, and
\eqref{1708231405} is satisfied with $h=k/2$. On the other hand, 
the relation \eqref{1708231405} implies \eqref{1708201840} with $\vectrho'=\vectrho$ and $k=2h.$ Therefore, by Proposition~\ref{201708201848} we obtain statement b).

\medskip

c) As in the proof of \cite[Theorem 1.4]{DenckerErb2015a}, we see: $\vect{\ell}^{(\vect{m})}_{\left(2\rho_1m^{\sharp}_1+\kappa_1,\ldots,2\rho_{\mathsf{d}}m^{\sharp}_{\mathsf{d}}+\kappa_{\mathsf{d}}\right)}(t')\in\{-1,1\}^{\mathsf{d}}$
if and only if $t'=t^{(\vect{m})}_{h}$  for some $h\in\mathbb{Z}$ satisfying \eqref{1708231405}.
In the same way as in the proof of part~b) above, we see that the image of a Lissajous  curve $\vect{\ell}\in\vect{\mathfrak{L}}^{(\vect{m}^{\sharp}, \, \vect{m}^{\flat})}_{\vect{\kappa}}$ 
contains an element of $\vect{F}^{\mathsf{d}}_{\emptyset}=\{-1,1\}^{\mathsf{d}}$ if and only if $\vect{\ell}$ is degenerate. Furthermore, we see as in the proof of by \cite[Theorem 1.4]{DenckerErb2015a} that the image of every degenerate Lissajous curve $\vect{\ell}\in\vect{\mathfrak{L}}^{(\vect{m}^{\sharp}, \, \vect{m}^{\flat})}_{\vect{\kappa}}$  contains precisely two elements of $\vect{F}^{\mathsf{d}}_{\emptyset}=\{-1,1\}^{\mathsf{d}}$. 

Let $\vectrho,\vectrho'\in\vect{R}^{(\vect{m}^{\flat})}$, $h,h'\in\mathbb{Z}$. If
\begin{equation}\label{1708231922}\vect{\ell}^{(\vect{m})}_{\left(2\rho'_1m^{\sharp}_1+\kappa_1,\ldots,2\rho'_{\mathsf{d}}m^{\sharp}_{\mathsf{d}}+\kappa_{\mathsf{d}}\right)}(t^{(\vect{m})}_{h'})=\vect{\ell}^{(\vect{m})}_{\left(2\rho_1m^{\sharp}_1+\kappa_1,\ldots,2\rho_{\mathsf{d}}m^{\sharp}_{\mathsf{d}}+\kappa_{\mathsf{d}}\right)}(t^{(\vect{m})}_{h})=\vect{u}\in\{-1,1\}^{\mathsf{d}},
\end{equation}
then, using $l_{\mathsf{i}}=(1+u_{\mathsf{i}})/2$,  we have  the congruence relations
\begin{equation}\label{1708231917}
\forall\,\mathsf{i} \in\{1,\ldots,\mathsf{d}\}:\quad  h\equiv 2\rho_{\mathsf{i}}m^{\sharp}_{\mathsf{i}}+\kappa_{\mathsf{i}}+l_{\mathsf{i}}m_{\mathsf{i}} \mod 2m_{\mathsf{i}},
\end{equation}
and the analogous  congruence relations
\begin{equation}\label{1708231916}
\forall\,\mathsf{i} \in\{1,\ldots,\mathsf{d}\}:\quad  h'\equiv 2\rho'_{\mathsf{i}}m^{\sharp}_{\mathsf{i}}+\kappa_{\mathsf{i}} + l_{\mathsf{i}}m_{\mathsf{i}} \mod 2m_{\mathsf{i}}.
\end{equation}
By Lemma \ref{1708311652}, the relations \eqref{1708231917} and \eqref{1708231916} imply $\vectrho'=\vectrho$. Thus, we have that \eqref{1708231922} implies $\vectrho'=\vectrho$. 
Combining all these statements, we obtain the first equality in \eqref{1708231441}. Furthermore, \eqref{1509221441} and \eqref{1708240203} imply the second equality in \eqref{1708231441}.

\medskip

d) The proven statements a) and b)  easily imply \eqref{1710311443} for $\vectrho,\vectrho'\in\vect{R}^{(\vect{m}^{\flat})}$. In view  of the definitions in \eqref{1609010824} and \eqref{1708242016}, we get $\#\vect{\mathfrak{L}}^{(\vect{m}^{\sharp}, \, \vect{m}^{\flat})}_{\vect{\kappa}}=\#\vect{R}^{(\vect{m}^{\flat})}=\p[\vect{m}^{\flat}]$. 
By~b), the equivalence classes of the degenerate curves in $\vect{\mathfrak{L}}^{(\vect{m}^{\sharp}, \, \vect{m}^{\flat})}_{\vect{\kappa}}$ contain 
exactly one element, the other classes contain exactly two elements. 
The  number of elements in $[\vect{\mathfrak{L}}^{(\vect{m}^{\sharp}, \, \vect{m}^{\flat})}_{\vect{\kappa}}]$ is therefore given by
$\# [\vect{\mathfrak{L}}^{(\vect{m}^{\sharp}, \, \vect{m}^{\flat})}_{\vect{\kappa}}]  = \p[\vect{m}^{\flat}]-(\p[\vect{m}^{\flat}]-N_{\mathrm{deg}})/2$.
\end{proof}

\medskip

\begin{example}\label{1710311355}
We consider the bivariate setting given in Example \ref{ex:1}.
According to Theorem \ref{thm:decompositionlissajous},~b) the Chebyshev variety $\mathcal{C}^{(2m,m)}_{(0,0)}$ can be written as the union of the images of the curves  
$\vect{\ell}^{(2m,m)}_{(0,2\rho)}$, $\rho \in \{0, \ldots, m-1\}$. Since $\vect{\ell}^{(2m,m)}_{(0,m-2\rho)}(t)= \vect{\ell}^{(2m,m)}_{(0,2\rho)}(-t)$, it 
follows that already the union 
of all curves $\vect{\ell}^{(2m,m)}_{(0,2\rho)}([0,2\pi))$, $\rho \in \{0, \ldots, \lfloor m/2 \rfloor\}$, gives the 
entire variety $\mathcal{C}^{(10,5)}_{(0,0)}$, see also Figure \ref{fig:doublem-1}. This observation
corresponds to the statement in Theorem \ref{201708201819}. Namely, we have $N_{\mathrm{deg}} = 1$ if $m$ is odd and $N_{\mathrm{deg}} = 2$ if
$m$ is even, and the total number of pairwise different sets $\vect{\ell}^{(2m,m)}_{0,2\rho}([0,2\pi))$ is given by the number of classes
$\# [\vect{\mathfrak{L}}^{((2m,1), \, (1,m))}_{(0,0)}] = (m+N_{\mathrm{deg}})/2$. 
\end{example}


\section{The spectral index sets}
\label{17008191035}

The sets we investigate in this section play a crucial role in the definition of the polynomial spaces in Section \ref{1609011843}. They describe
the spectral domain of the polynomial spaces in which the interpolation problem is solved. 

\medskip

For $\vect{m}\in\mathbb{N}^{\mathsf{d}}$, $\vect{\kappa}\in\mathbb{N}^{\mathsf{d}}$, $\mathfrak{r}\in\{0,1\}$, we define the sets
\begin{equation}\label{1608271930}
\vect{\Gamma}^{(\vect{m})}_{\vect{\kappa},\mathfrak{r}} = 
\left\{\vectgamma\in\mathbb{N}_0^{\mathsf{d}}\left|\begin {array}{l} 
\forall\,\mathsf{i}\ \text{with}\ \kappa_{\mathsf{i}}\equiv \mathfrak{r}\tmod 2:\ 2\gamma_{\mathsf{i}}\leq m_{\mathsf{i}},\\
\forall\,\mathsf{i}\ \text{with}\  \kappa_{\mathsf{i}}\not\equiv \mathfrak{r}\tmod 2:\ 2\gamma_{\mathsf{i}}< m_{\mathsf{i}}
\end{array}\right.
\right\},
\end{equation}
and
\[\overline{\vect{\Gamma}}^{(\vect{m})}_{\vect{\kappa}}=\left\{\,\vectgamma\in\mathbb{N}_0^{\mathsf{d}}\ \left|\begin {array}{ll} 
\forall\,\mathsf{i}\in\{1,\ldots,\mathsf{d}\}:& \!\gamma_{\mathsf{i}}\leq m_{\mathsf{i}},\\
\forall\,\mathsf{i},\mathsf{j}\ \text{with}\ \mathsf{i}\neq\mathsf{j}:& \!\gamma_{\mathsf{i}}/m_{\mathsf{i}}+\gamma_{\mathsf{j}}/m_{\mathsf{j}}\leq  1,\\
\forall\,\mathsf{i},\mathsf{j}\ \text{with}\ \kappa_{\mathsf{i}}\not \equiv \kappa_{\mathsf{j}}\tmod 2:& \!(\gamma_{\mathsf{i}},\gamma_{\mathsf{j}}) \neq (m_{\mathsf{i}}/2, m_{\mathsf{j}}/2)
\end{array}\right.\right\},\]
and
\[\vectGammacircvect{m} =\left\{\,\vectgamma\in\mathbb{N}_0^{\mathsf{d}}\ \left|\begin {array}{ll} 
\forall\,\mathsf{i}\in\{1,\ldots,\mathsf{d}\}:& \!\gamma_{\mathsf{i}} < m_{\mathsf{i}},\\
\forall\,\mathsf{i},\mathsf{j}\ \text{with}\ \mathsf{i}\neq\mathsf{j}:& \!\gamma_{\mathsf{i}}/m_{\mathsf{i}}+\gamma_{\mathsf{j}}/m_{\mathsf{j}}<  1
\end{array}\right.\right\}.\]
The sets $\vect{\Gamma}^{(\vect{m})}_{\vect{\kappa},0}$ and $\vect{\Gamma}^{(\vect{m})}_{\vect{\kappa},1}$ contain, with small modifications on the boundary,
all nonnegative integer vectors $\vectgamma$ inside
the $\mathsf{d}$-dimensional rectangle $\displaystyle\bigtimes_{\substack{\vspace{-8pt}\\\mathsf{i}=1}}^{\substack{\mathsf{d}\\\vspace{-10pt}}}[0,m_{\mathsf{i}}/2]$. Note that 
$\vect{\Gamma}^{(\vect{m})}_{\vect{0},1} \subseteq \vect{\Gamma}^{(\vect{m})}_{\vect{\kappa},0} \subseteq 
\vect{\Gamma}^{(\vect{m})}_{\vect{0},0}$ and $\vect{\Gamma}^{(\vect{m})}_{\vect{0},1} \subseteq \vect{\Gamma}^{(\vect{m})}_{\vect{\kappa},1} \subseteq 
\vect{\Gamma}^{(\vect{m})}_{\vect{0},0}$, as well as 
\[\vect{\Gamma}^{(\vect{m})}_{\vect{0},1}=\vect{\Gamma}^{(\vect{m})}_{\vect{\kappa},0} \cap \vect{\Gamma}^{(\vect{m})}_{\vect{\kappa},1}\qquad\text{for all}\quad \vect{\kappa}\in\mathbb{Z}^{\mathsf{d}}.\]

\medskip

From the particular cross product structure of the sets $\vect{\Gamma}^{(\vect{m})}_{\vect{\kappa},\mathfrak{r}}$ in \eqref{1608271930}, we immediately obtain their cardinality. 
Using the explicit values from \eqref{1509221434}, we can write
\begin{equation}\label{1609170228} 
\#\vect{\Gamma}^{(\vect{m})}_{\vect{\kappa},\mathfrak{r}}=\#\I^{(\vect{m})}_{\vect{\kappa},\mathfrak{r}} = \#\LC^{(\vect{m})}_{\vect{\kappa},\mathfrak{r}}.
\end{equation}
Further, the sets $\overline{\vect{\Gamma}}^{(\vect{m})}_{\vect{\kappa}}$ and $\vectGammacircvect{m}$ contain with small modifications on the boundary,
all nonnegative integer vectors $\vectgamma$ inside
the $\mathsf{d}$-dimensional polyhedral region bounded by the hyperplanes $\{\,\vect{\xi}\in\mathbb{R}^{\mathsf{d}}\,|\,\xi_{\mathsf{i}}=0\,\}$ and the hyperplanes $\{\,\vect{\xi}\in\mathbb{R}\,|\,\xi_{\mathsf{i}}/m_{\mathsf{i}}+ \xi_{\mathsf{j}}/m_{\mathsf{j}} = 1\,\}$ with $\mathsf{j}\neq\mathsf{i}$. 
For $\mathsf{d}=3$, two such index sets are illustrated in Figure \ref{fig:MPX-3} and \ref{fig:LCpoints542}.

\medskip

When introducing later the fundamental basis for the polynomial interpolation special attention has to be given 
to the elements $\vectgamma\in\overline{\vect{\Gamma}}^{(\vect{m})}_{\vect{\kappa}}$ lying on some hyperplane $\{\,\vect{\xi}\in\mathbb{R}^{\mathsf{d}}\,|\,\xi_{\mathsf{i}}/m_{\mathsf{i}}+ \xi_{\mathsf{j}}/m_{\mathsf{j}} = 1\,\}$ with $\mathsf{j}\neq\mathsf{i}$.  To handle these elements, a particular class decomposition 
of the set $\overline{\vect{\Gamma}}^{(\vect{m})}_{\vect{\kappa}}$
turns out to be useful.  

We denote
\begin{equation}\label{1608251942}
\mathsf{K}^{(\vect{m})}(\vectgamma)=\left\{\,\mathsf{j}\in\{1,\ldots,\mathsf{d}\}\,\left|\,\gamma_{\mathsf{j}}/m_{\mathsf{j}}=\max\nolimits^{(\vect{m})}[\vectgamma]\right.\,\right\}
\end{equation}
where $\max\nolimits\nolimits^{(\vect{m})}[\vectgamma]=\max\left\{\,\gamma_{\mathsf{i}}/m_{\mathsf{i}}\,\left|\,\mathsf{i}\in\{1,\ldots,\mathsf{d}\}\right.\,\right\}$.
Further, using 
\begin{equation}\label{1608251943}
\mathfrak{s}^{(\vect{m})}_{\mathsf{j}}(\vectgamma)=(\gamma_1,\ldots,\gamma_{\mathsf{j}-1}, m_{\mathsf{j}}-\gamma_{\mathsf{j}},\gamma_{\mathsf{j}+1}, \ldots,\gamma_{\mathsf{d}}),
\end{equation}
we define the sets 
\begin{equation}\label{1608251944}
\mathfrak{S}^{(\vect{m})}(\vectgamma)=\left\{\,\mathfrak{s}^{(\vect{m})}_{\mathsf{j}}(\vectgamma)\,\left|\ \mathsf{j}\in \mathsf{K}^{(\vect{m})}(\vectgamma)\right.\,\right\}.
\end{equation}
Now, we introduce the set $\left[\overline{\vect{\Gamma}}^{(\vect{m})}_{\vect{\kappa}}\right]$ as
\begin{equation}\label{1608241504} 
\left[\overline{\vect{\Gamma}}^{(\vect{m})}_{\vect{\kappa}}\right]=\left\{\,\{\vectgamma\}\,\left|\,\vectgamma\in\vect{\Gamma}^{(\vect{m})}_{\vect{\kappa},0} \cup \vect{\Gamma}^{(\vect{m})}_{\vect{\kappa},1} \right.\,\right\}\cup \left\{\,\mathfrak{S}^{(\vect{m})}(\vecteta)\,\left|\,\vecteta\in\vect{\Gamma}^{(\vect{m})}_{\vect{0},1}\right.\,\right\}.
\end{equation}
and use the notation
\begin{equation}\label{1608291955}
\vect{\Lambda}^{(\vect{m}),1}_{\vect{\kappa}}=\vectGammacircvect{m} \cup \vect{\Gamma}^{(\vect{m})}_{\vect{\kappa},0} \cup \vect{\Gamma}^{(\vect{m})}_{\vect{\kappa},1}.
\end{equation}

\begin{proposition} \label{201512041410} The following statements hold true.
\begin{enumerate}[a)]
 \item We have
\begin{equation}\label{1608292002}  \vect{\Lambda}^{(\vect{m}),1}_{\vect{\kappa}}\subsetneqq \overline{\vect{\Gamma}}^{(\vect{m})}_{\vect{\kappa}}
\end{equation}
and
\begin{equation}\label{1608251930}
\mathfrak{S}^{(\vect{m})}(\vectgamma)\subseteq \overline{\vect{\Gamma}}^{(\vect{m})}_{\vect{\kappa}}\qquad \text{for all}\quad \vectgamma \in\overline{\vect{\Gamma}}^{(\vect{m})}_{\vect{\kappa}}.
\end{equation}
 \item For $\vecteta\in \vect{\Gamma}^{(\vect{m})}_{\vect{0},1}$, we have $\mathfrak{S}^{(\vect{m})}(\vecteta)\cap \left(\vect{\Gamma}^{(\vect{m})}_{\vect{\kappa},0} 
 \cup \vect{\Gamma}^{(\vect{m})}_{\vect{\kappa},1}\right)=\emptyset$ and \[\#\mathfrak{S}^{(\vect{m})}(\vecteta)=\#\mathsf{K}^{(\vect{m})}(\vecteta).\]
 \item For $\vectgamma\in \overline{\vect{\Gamma}}^{(\vect{m})}_{\vect{\kappa}}\setminus\left(\vect{\Gamma}^{(\vect{m})}_{\vect{\kappa},0} 
 \cup \vect{\Gamma}^{(\vect{m})}_{\vect{\kappa},1}\right)$, there is a unique $\mathsf{k}\in\{1,\ldots,\mathsf{d}\}$ such that 
\begin{equation}\label{1608300914}
\dfrac{\gamma_{\mathsf{i}}}{m_{\mathsf{i}}}< \dfrac12<\dfrac{\gamma_{\mathsf{k}}}{m_{\mathsf{k}}}.
\end{equation}
Then, for  $\vecteta = \mathfrak{s}^{(\vect{m})}_{\mathsf{k}}(\vectgamma)$ we have $\vecteta\in \vect{\Gamma}^{(\vect{m})}_{\vect{0},1}$, $\mathsf{k}\in \mathsf{K}^{(\vect{m})}(\vecteta)$,
and $\vectgamma = \mathfrak{s}^{(\vect{m})}_{\mathsf{k}}(\vecteta)$.

 \item The set $\left[\overline{\vect{\Gamma}}^{(\vect{m})}_{\vect{\kappa}}\right]$ 
is a class decomposition of $\overline{\vect{\Gamma}}^{(\vect{m})}_{\vect{\kappa}}$, and the number  of classes is
\begin{equation}\label{B1509231303}
\#\left[\overline{\vect{\Gamma}}^{(\vect{m})}_{\vect{\kappa}}\right]=\#\vect{\Gamma}^{(\vect{m})}_{\vect{\kappa},0}+\#\vect{\Gamma}^{(\vect{m})}_{\vect{\kappa},1}=\#\I^{(\vect{m})}_{\vect{\kappa},0}+\#\I^{(\vect{m})}_{\vect{\kappa},1}=\#\I^{(\vect{m})}_{\vect{\kappa}} =\#\LC^{(\vect{m})}_{\vect{\kappa}}.
\end{equation}

 \item For $\vectgamma\in \overline{\vect{\Gamma}}^{(\vect{m})}_{\vect{\kappa}}$, we 
 denote the class of $\vectgamma$ in $\left[\overline{\vect{\Gamma}}^{(\vect{m})}_{\vect{\kappa}}\right]$ by $[\vectgamma]$.\\
If $\mathsf{d}=1$, then  we have $\#[\vectgamma]=1$ for all
$\vectgamma\in\overline{\vect{\Gamma}}^{(\vect{m})}_{\vect{\kappa}}$. If $\mathsf{d}\geq2$, then  
\begin{equation}\label{1608241517} \#[\vectgamma]>1\quad \Longleftrightarrow\quad \vectgamma\notin\vect{\Lambda}^{(\vect{m}),1}_{\vect{\kappa}}\qquad\text{for all}\quad \vectgamma\in \overline{\vect{\Gamma}}^{(\vect{m})}_{\vect{\kappa}}.
\end{equation}
\end{enumerate}
\end{proposition}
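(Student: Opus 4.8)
The plan is to prove the five statements in their order of mutual dependence: part a) supplies the ambient inclusions, parts b) and c) describe how the elements of $\overline{\vect{\Gamma}}^{(\vect{m})}_{\vect{\kappa}}$ lying outside the two ``boxes'' $\vect{\Gamma}^{(\vect{m})}_{\vect{\kappa},0},\vect{\Gamma}^{(\vect{m})}_{\vect{\kappa},1}$ arise by a single reflection from $\vect{\Gamma}^{(\vect{m})}_{\vect{0},1}$, and parts d), e) are then bookkeeping on top of b) and c). For a), both containments in $\vect{\Lambda}^{(\vect{m}),1}_{\vect{\kappa}}\subseteq\overline{\vect{\Gamma}}^{(\vect{m})}_{\vect{\kappa}}$ are componentwise checks: for $\vectgamma\in\vectGammacircvect{m}$ the strict inequalities $\gamma_{\mathsf{i}}/m_{\mathsf{i}}+\gamma_{\mathsf{j}}/m_{\mathsf{j}}<1$ give $\gamma_{\mathsf{i}}\le m_{\mathsf{i}}$ and, since the value $1$ is excluded, also the parity condition of $\overline{\vect{\Gamma}}^{(\vect{m})}_{\vect{\kappa}}$; for $\vectgamma\in\vect{\Gamma}^{(\vect{m})}_{\vect{\kappa},\mathfrak{r}}$ one has $\gamma_{\mathsf{i}}/m_{\mathsf{i}}\le 1/2$, and for $\kappa_{\mathsf{i}}\not\equiv\kappa_{\mathsf{j}}\tmod 2$ exactly one index is of the ``strict'' type, ruling out $(\gamma_{\mathsf{i}},\gamma_{\mathsf{j}})=(m_{\mathsf{i}}/2,m_{\mathsf{j}}/2)$. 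Strictness of \eqref{1608292002} is witnessed by the vector with one coordinate equal to $m_1$ and the rest $0$. For \eqref{1608251930}, fix $\mathsf{j}\in\mathsf{K}^{(\vect{m})}(\vectgamma)$ and put $\vecteta=\mathfrak{s}^{(\vect{m})}_{\mathsf{j}}(\vectgamma)$; only inequalities involving $\mathsf{j}$ could change, and for $\mathsf{i}\neq\mathsf{j}$ one has $\eta_{\mathsf{j}}/m_{\mathsf{j}}+\eta_{\mathsf{i}}/m_{\mathsf{i}}=1-\gamma_{\mathsf{j}}/m_{\mathsf{j}}+\gamma_{\mathsf{i}}/m_{\mathsf{i}}\le 1$ because $\mathsf{j}$ realizes the maximum, while a pair containing $\mathsf{j}$ can only violate the parity condition if $\gamma_{\mathsf{j}}=m_{\mathsf{j}}/2$, which is excluded for $\vectgamma$.

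For b), note $\vect{\Gamma}^{(\vect{m})}_{\vect{0},1}=\{\vecteta:2\eta_{\mathsf{i}}<m_{\mathsf{i}}\ \forall\,\mathsf{i}\}$, so every $\mathsf{j}\in\mathsf{K}^{(\vect{m})}(\vecteta)$ has $\eta_{\mathsf{j}}<m_{\mathsf{j}}/2$, hence the $\mathsf{j}$th coordinate $m_{\mathsf{j}}-\eta_{\mathsf{j}}$ of $\mathfrak{s}^{(\vect{m})}_{\mathsf{j}}(\vecteta)$ strictly exceeds $m_{\mathsf{j}}/2$; this simultaneously gives $\mathfrak{s}^{(\vect{m})}_{\mathsf{j}}(\vecteta)\notin\vect{\Gamma}^{(\vect{m})}_{\vect{\kappa},0}\cup\vect{\Gamma}^{(\vect{m})}_{\vect{\kappa},1}$ and, comparing $\mathsf{j}$th coordinates, injectivity of $\mathsf{j}\mapsto\mathfrak{s}^{(\vect{m})}_{\mathsf{j}}(\vecteta)$ on $\mathsf{K}^{(\vect{m})}(\vecteta)$, so $\#\mathfrak{S}^{(\vect{m})}(\vecteta)=\#\mathsf{K}^{(\vect{m})}(\vecteta)$. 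For c), the sum conditions of $\overline{\vect{\Gamma}}^{(\vect{m})}_{\vect{\kappa}}$ allow at most one index $\mathsf{k}$ with $\gamma_{\mathsf{k}}/m_{\mathsf{k}}>1/2$; if no such index existed, then with $E=\{\mathsf{i}:2\gamma_{\mathsf{i}}=m_{\mathsf{i}}\}$ the third defining condition of $\overline{\vect{\Gamma}}^{(\vect{m})}_{\vect{\kappa}}$ forces all indices in $E$ to carry the same parity $\mathfrak{r}$ of $\kappa$ (treating $E=\emptyset$, where $\vectgamma\in\vect{\Gamma}^{(\vect{m})}_{\vect{0},1}\subseteq\vect{\Gamma}^{(\vect{m})}_{\vect{\kappa},0}$, separately), whence $\vectgamma\in\vect{\Gamma}^{(\vect{m})}_{\vect{\kappa},\mathfrak{r}}$, contrary to assumption. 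Thus a unique $\mathsf{k}$ exists; \eqref{1608300914} holds since $\gamma_{\mathsf{i}}/m_{\mathsf{i}}\le 1-\gamma_{\mathsf{k}}/m_{\mathsf{k}}<1/2$, and a direct computation shows $\vecteta=\mathfrak{s}^{(\vect{m})}_{\mathsf{k}}(\vectgamma)$ has all coordinate ratios $<1/2$, so $\vecteta\in\vect{\Gamma}^{(\vect{m})}_{\vect{0},1}$, while $\mathsf{k}\in\mathsf{K}^{(\vect{m})}(\vecteta)$ is again the inequality $\gamma_{\mathsf{k}}/m_{\mathsf{k}}+\gamma_{\mathsf{i}}/m_{\mathsf{i}}\le 1$, and $\mathfrak{s}^{(\vect{m})}_{\mathsf{k}}$ being an involution gives $\vectgamma=\mathfrak{s}^{(\vect{m})}_{\mathsf{k}}(\vecteta)$.

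For d), coverage of $\overline{\vect{\Gamma}}^{(\vect{m})}_{\vect{\kappa}}$ is immediate from c), and every class is nonempty since $\mathsf{K}^{(\vect{m})}(\vecteta)\neq\emptyset$. Disjointness splits into three cases: distinct singletons are disjoint; a singleton $\{\vectgamma\}$ with $\vectgamma\in\vect{\Gamma}^{(\vect{m})}_{\vect{\kappa},0}\cup\vect{\Gamma}^{(\vect{m})}_{\vect{\kappa},1}$ is disjoint from every $\mathfrak{S}^{(\vect{m})}(\vecteta)$ by b); and if $\mathfrak{S}^{(\vect{m})}(\vecteta)$ and $\mathfrak{S}^{(\vect{m})}(\vecteta')$ shared an element $\vectgamma$, then (by c) applied to $\vectgamma$, which lies outside both boxes by b) the dominant index $\mathsf{k}$ of $\vectgamma$ is forced to be the reflection index in both representations, so $\vecteta=\mathfrak{s}^{(\vect{m})}_{\mathsf{k}}(\vectgamma)=\vecteta'$. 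Hence $\left[\overline{\vect{\Gamma}}^{(\vect{m})}_{\vect{\kappa}}\right]$ is a partition with $\#\left[\overline{\vect{\Gamma}}^{(\vect{m})}_{\vect{\kappa}}\right]=\#(\vect{\Gamma}^{(\vect{m})}_{\vect{\kappa},0}\cup\vect{\Gamma}^{(\vect{m})}_{\vect{\kappa},1})+\#\vect{\Gamma}^{(\vect{m})}_{\vect{0},1}$; using $\vect{\Gamma}^{(\vect{m})}_{\vect{\kappa},0}\cap\vect{\Gamma}^{(\vect{m})}_{\vect{\kappa},1}=\vect{\Gamma}^{(\vect{m})}_{\vect{0},1}$ this equals $\#\vect{\Gamma}^{(\vect{m})}_{\vect{\kappa},0}+\#\vect{\Gamma}^{(\vect{m})}_{\vect{\kappa},1}$, and \eqref{1609170228}, \eqref{1509221435} give the remaining equalities in \eqref{B1509231303}. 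For e), when $\mathsf{d}=1$ the pair conditions are vacuous and $\mathsf{K}^{(\vect{m})}(\vectgamma)=\{1\}$, so every class is a singleton. For $\mathsf{d}\ge 2$: if $\vectgamma\in\vect{\Gamma}^{(\vect{m})}_{\vect{\kappa},0}\cup\vect{\Gamma}^{(\vect{m})}_{\vect{\kappa},1}$ then $[\vectgamma]=\{\vectgamma\}$ and $\vectgamma\in\vect{\Lambda}^{(\vect{m}),1}_{\vect{\kappa}}$; otherwise $[\vectgamma]=\mathfrak{S}^{(\vect{m})}(\vecteta)$ with $\vectgamma=\mathfrak{s}^{(\vect{m})}_{\mathsf{k}}(\vecteta)$ as in c), and $\#[\vectgamma]=\#\mathsf{K}^{(\vect{m})}(\vecteta)$ by b). If this number is $1$, then $\mathsf{k}$ is a strict maximizer and $\eta_{\mathsf{k}}>0$ (else all $\eta_{\mathsf{i}}=0$, forcing $\#\mathsf{K}^{(\vect{m})}(\vecteta)=\mathsf{d}\ge 2$), and checking the three defining inequalities of $\vectGammacircvect{m}$ shows $\vectgamma\in\vectGammacircvect{m}\subseteq\vect{\Lambda}^{(\vect{m}),1}_{\vect{\kappa}}$; if it is $\ge 2$, a second maximizer $\mathsf{k}'$ yields $\gamma_{\mathsf{k}}/m_{\mathsf{k}}+\gamma_{\mathsf{k}'}/m_{\mathsf{k}'}=1$, so $\vectgamma\notin\vectGammacircvect{m}$ and, being outside both boxes, $\vectgamma\notin\vect{\Lambda}^{(\vect{m}),1}_{\vect{\kappa}}$; this proves \eqref{1608241517}.

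The main obstacle is the structural step inside c): showing that an element of $\overline{\vect{\Gamma}}^{(\vect{m})}_{\vect{\kappa}}$ lying in neither box must have exactly one ``overfull'' coordinate. The delicate point is the configuration where several coordinates sit exactly at $m_{\mathsf{i}}/2$, which is precisely where the parity (third) condition in the definition of $\overline{\vect{\Gamma}}^{(\vect{m})}_{\vect{\kappa}}$ is indispensable; once c) is established, parts d) and e) are mechanical.
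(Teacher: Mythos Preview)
Your proof is correct and follows essentially the same route as the paper's: verify the inclusions in a) directly from the definitions, use the strict inequality $\eta_{\mathsf{j}}<m_{\mathsf{j}}/2$ in b), isolate the unique ``overfull'' coordinate in c), and assemble d), e) from b) and c). Two minor organizational differences are worth noting: in c) you spell out the argument (via the set $E$ and the parity condition) for why some $\gamma_{\mathsf{k}}/m_{\mathsf{k}}>1/2$ must exist, whereas the paper asserts this in one line; and in d) you reuse c) to show the $\mathfrak{S}^{(\vect{m})}(\vecteta)$'s are pairwise disjoint and make the inclusion--exclusion count $\#(\vect{\Gamma}^{(\vect{m})}_{\vect{\kappa},0}\cup\vect{\Gamma}^{(\vect{m})}_{\vect{\kappa},1})+\#\vect{\Gamma}^{(\vect{m})}_{\vect{0},1}=\#\vect{\Gamma}^{(\vect{m})}_{\vect{\kappa},0}+\#\vect{\Gamma}^{(\vect{m})}_{\vect{\kappa},1}$ explicit, which the paper leaves implicit.
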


\noindent 

\medskip

For $\mathsf{d}\geq 2$ there is at least one class that contains more than one element, in fact
\begin{equation}\label{1609170212}
\mathfrak{S}^{(\vect{m})}(\vect{0})=\{(m_1,0,0,\ldots,0),\,(0,m_2,0,\ldots,0),\,\ldots,\,(0,0,\ldots,0,m_{\mathsf{d}})\},
\end{equation}
and thus we have $\#\mathfrak{S}^{(\vect{m})}(\vect{0})=\mathsf{d}$. There are interesting cases in which all other classes consist of precisely one element. 
These cases are characterized in Proposition~\ref{201512151534}. 

\medskip

\begin{proof} a) By the definitions, we see that 
$\vectGammacircvect{m}\!\!,\, \vect{\Gamma}^{(\vect{m})}_{\vect{\kappa},0}, \vect{\Gamma}^{(\vect{m})}_{\vect{\kappa},1} \subseteq \overline{\vect{\Gamma}}^{(\vect{m})}_{\vect{\kappa}}$, 
and therefore $\vect{\Lambda}^{(\vect{m}),1}_{\vect{\kappa}}\subseteq \overline{\vect{\Gamma}}^{(\vect{m})}_{\vect{\kappa}}$.
Further, $\mathfrak{S}^{(\vect{m})}(\vect{0})\cap \vect{\Lambda}^{(\vect{m}),1}_{\vect{\kappa}} =\emptyset$ but 
$\mathfrak{S}^{(\vect{m})}(\vect{0})\subseteq \overline{\vect{\Gamma}}^{(\vect{m})}_{\vect{\kappa}}$. We get \eqref{1608292002}.

Now, we show \eqref{1608251930}.
Let $\vectgamma \in\overline{\vect{\Gamma}}^{(\vect{m})}_{\vect{\kappa}}$, $\mathsf{k}\in \mathsf{K}^{(\vect{m})}(\vectgamma)$, 
and  $\vectgamma' = \mathfrak{s}_{\mathsf{k}}^{(\vect{m})}(\vectgamma)$.  We show that $\vectgamma'\in \overline{\vect{\Gamma}}^{(\vect{m})}_{\vect{\kappa}}$.  
By the definition in \eqref{1608251942}, we have $\gamma_{\mathsf{i}}/m_{\mathsf{i}}\leq \gamma_{\mathsf{k}}/m_{\mathsf{k}}$ for all $\mathsf{i}$, and 
by the definitions in \eqref{1608251943}, \eqref{1608251944}, we have $\gamma'_{\mathsf{k}}=m_{\mathsf{k}}-\gamma_{\mathsf{k}}$ and $\gamma'_{\mathsf{i}}=\gamma_{\mathsf{i}}$ for $\mathsf{i}\neq \mathsf{k}$. We obtain
\begin{equation}\label{A1603131454}
\forall\,\mathsf{i}\in \{1,\ldots,\mathsf{d}\}\setminus\{\mathsf{k}\}:\quad \gamma'_{\mathsf{i}}/m_{\mathsf{i}}+\gamma'_{\mathsf{k}}/m_{\mathsf{k}}= \gamma_{\mathsf{i}}/m_{\mathsf{i}}-\gamma_{\mathsf{k}}/m_{\mathsf{k}}+1\leq 1.
\end{equation}
Further, since $\vectgamma\in\overline{\vect{\Gamma}}^{(\vect{m})}_{\vect{\kappa}}$, and since $\gamma'_{\mathsf{i}}=\gamma_{\mathsf{i}}$ for all $\mathsf{i}\neq \mathsf{k}$, we get
\begin{equation}\label{A1603131503}
\forall\,\mathsf{i},\mathsf{j}\in \{1,\ldots,\mathsf{d}\}\setminus\{\mathsf{k}\}\ \text{with}\ \mathsf{i}\neq\mathsf{j}:\quad\gamma'_{\mathsf{i}}/m_{\mathsf{i}}+\gamma'_{\mathsf{j}}/m_{\mathsf{j}}\leq 1.
\end{equation}
Assume that $\kappa_{\mathsf{i}}\not \equiv \kappa_{\mathsf{j}}\tmod 2$ 
and  $(\gamma'_{\mathsf{i}},\gamma'_{\mathsf{j}}) = (m_{\mathsf{i}}/2, m_{\mathsf{j}}/2)$. If $\mathsf{i},\mathsf{j}\in \{1,\ldots,\mathsf{d}\}\setminus\{\mathsf{k}\}$, then $(\gamma_{\mathsf{i}},\gamma_{\mathsf{j}}) = (\gamma'_{\mathsf{i}},\gamma'_{\mathsf{j}})=(m_{\mathsf{i}}/2, m_{\mathsf{j}}/2)$, $\kappa_{\mathsf{i}}\not \equiv \kappa_{\mathsf{j}}\tmod 2$,  and  $\vectgamma\in\overline{\vect{\Gamma}}^{(\vect{m})}_{\vect{\kappa}}$ imply a contradiction. If $\mathsf{j}=\mathsf{k}$, then we have $\gamma_{\mathsf{k}}=m_{\mathsf{k}}-\gamma'_{\mathsf{i}}=m_{\mathsf{k}}-m_{\mathsf{k}}/2=\gamma'_{\mathsf{k}}$, thus  $\vectgamma'=\vectgamma$, and, therefore, $(\gamma_{\mathsf{i}},\gamma_{\mathsf{k}}) = (m_{\mathsf{i}}/2, m_{\mathsf{k}}/2)$, $\kappa_{\mathsf{i}}\not \equiv \kappa_{\mathsf{k}}\tmod 2$,  and  $\overline{\vect{\Gamma}}^{(\vect{m})}_{\vect{\kappa}}$ imply a contradiction.

Thus, we can conclude that if $\kappa_{\mathsf{i}}\not \equiv \kappa_{\mathsf{j}}\tmod 2$, then $(\gamma'_{\mathsf{i}},\gamma'_{\mathsf{j}}) \neq (m_{\mathsf{i}}/2, m_{\mathsf{j}}/2)$ for all $\mathsf{i}\neq \mathsf{j}$. 
This statement combined with \eqref{A1603131454}, \eqref{A1603131503} yields exactly $\vectgamma'\in \overline{\vect{\Gamma}}^{(\vect{m})}$. 

\medskip

b) Let $\vecteta\in \vect{\Gamma}^{(\vect{m})}_{\vect{0},1}$, $\vectgamma\in  \mathfrak{S}^{(\vect{m})}(\vecteta)$.
There exists $\mathsf{k}\in \mathsf{K}^{(\vect{m})}(\vecteta)$ such that $\vectgamma = \mathfrak{s}^{(\vect{m})}_{\mathsf{k}}(\vecteta)$.
Since $\eta_{\mathsf{k}}<m_{\mathsf{k}}/2$, we have  $\gamma_{\mathsf{k}}=m_{\mathsf{k}}-\eta_{\mathsf{k}}>m_{\mathsf{k}}/2$.
Thus, $\vectgamma\notin \vect{\Gamma}^{(\vect{m})}_{\vect{\kappa},0}\cup\vect{\Gamma}^{(\vect{m})}_{\vect{\kappa},1}$.
We get 
 \begin{equation}\label{1608292115}
\{\vectgamma\}\cap \mathfrak{S}^{(\vect{m})}(\vecteta)=\emptyset\quad \text{for $\vectgamma\in \vect{\Gamma}^{(\vect{m})}_{\vect{\kappa},0}\cup\vect{\Gamma}^{(\vect{m})}_{\vect{\kappa},1}$ 
and $\vecteta\in \vect{\Gamma}^{(\vect{m})}_{\vect{0},1}$}.
 \end{equation}
Now, let $\vecteta\in \vect{\Gamma}^{(\vect{m})}_{\vect{0},1}$,  
and $\vectgamma = \mathfrak{s}^{(\vect{m})}_{\mathsf{k}}(\vecteta)$, $\vectgamma' = \mathfrak{s}^{(\vect{m})}_{\mathsf{k}'}(\vecteta)$ for two indices
$\mathsf{k}, \mathsf{k'}\in\mathsf{K}^{(\vect{m})}(\vecteta)$.
By the same argument as above, we get $\gamma_{\mathsf{k}}>m_{\mathsf{k}}/2$ and $\gamma'_{\mathsf{k}'}>m_{\mathsf{k}'}/2$. Thus, 
by a), we have  $\gamma_{\mathsf{i}}<m_{\mathsf{k}}/2$ for $\mathsf{i}\neq \mathsf{k}$ and $\gamma'_{\mathsf{j}}<m_{\mathsf{k}'}/2$ for
$\mathsf{j}\neq \mathsf{k}'$. Therefore,  $\vectgamma'=\vectgamma$ implies 
$\mathsf{k}'=\mathsf{k}$, and thus, the statement in b). 

\medskip

c) Let 

\vspace{-2em}

\begin{equation}\label{1608292236}
\vectgamma \in \overline{\vect{\Gamma}}^{(\vect{m})}_{\vect{\kappa}} \setminus \left( \vect{\Gamma}^{(\vect{m})}_{\vect{\kappa},0} \cup \vect{\Gamma}^{(\vect{m})}_{\vect{\kappa},1} \right).
\end{equation}
Since $\vectgamma \notin \vect{\Gamma}^{(\vect{m})}_{\vect{\kappa},0} \cup \vect{\Gamma}^{(\vect{m})}_{\vect{\kappa},1}$, there exists an index $\mathsf{k}$ such that
$\gamma_{\mathsf{k}}/m_{\mathsf{k}} > 1/2$. By the definition of the set $\overline{\vect{\Gamma}}^{(\vect{m})}_{\vect{\kappa}}$, we have 
$\gamma_{\mathsf{i}}/m_{\mathsf{i}}+\gamma_{\mathsf{k}}/m_{\mathsf{k}}\leq 1$ 
and we necessarily obtain $\gamma_{\mathsf{i}}/m_{\mathsf{i}} < 1/2$ for all other indices $\mathsf{i} \neq \mathsf{k}$. Thus, 
there exists a unique $\mathsf{k}$ satisfying \eqref{1608300914}. Further, for $\vecteta=\mathfrak{s}^{(\vect{m})}_{\mathsf{k}}(\vectgamma)$ 
we have $\eta_{\mathsf{k}}=m_{\mathsf{k}}-\gamma_{\mathsf{k}}<m_{\mathsf{k}}/2$ and $\eta_{\mathsf{i}}=\gamma_{\mathsf{i}}$ for $\mathsf{i}\neq \mathsf{k}$.
Therefore, $\eta_{\mathsf{k}}/m_{\mathsf{k}}<1/2$ and, since $\gamma_{\mathsf{i}}/m_{\mathsf{i}}+\gamma_{\mathsf{k}}/m_{\mathsf{k}}\leq 1$, we 
also get $\eta_{\mathsf{i}}/m_{\mathsf{i}}\leq \eta_{\mathsf{k}}/m_{\mathsf{k}}<1/2$ for all $\mathsf{i} \neq \mathsf{k}$. 
All together, we obtain $\vecteta\in \vect{\Gamma}^{(\vect{m})}_{\vect{0},1}$ and 
$\vectgamma=\mathfrak{s}^{(\vect{m})}_{\mathsf{k}}(\mathfrak{s}^{(\vect{m})}_{\mathsf{k}}(\vectgamma))=\mathfrak{s}^{(\vect{m})}_{\mathsf{k}}(\vecteta)$ and 
$\mathsf{k}\in \mathsf{K}^{(\vect{m})}(\vecteta)$, and thus, the statements in c).

\medskip

d) Next, we show that
 \begin{equation}\label{1608292116}
 \mathfrak{S}^{(\vect{m})}(\vecteta')\cap\mathfrak{S}^{(\vect{m})}(\vecteta)=\emptyset\qquad \text{for}\quad \text{$\vecteta,\vecteta'\in\vect{\Gamma}^{(\vect{m})}_{\vect{0},1}$ with $\vecteta' \neq\vecteta$}.
 \end{equation}
Let $\vectgamma\in\mathfrak{S}^{(\vect{m})}(\vecteta')\cap\mathfrak{S}^{(\vect{m})}(\vecteta)$. Since
$\vectgamma\in \mathfrak{S}^{(\vect{m})}(\vecteta)$, there exists $\mathsf{k}\in \mathsf{K}^{(\vect{m})}(\vecteta)$ 
such that $\vectgamma = \mathfrak{s}^{(\vect{m})}_{\mathsf{k}}(\vecteta)$. Since $\eta_{\mathsf{k}}<m_{\mathsf{k}}/2$, 
we have  $\gamma_{\mathsf{k}}=m_{\mathsf{k}}-\eta_{\mathsf{k}}>m_{\mathsf{k}}/2$, and further $\gamma_{\mathsf{i}}=\eta_{\mathsf{i}}<m_{\mathsf{k}}/2<\gamma_{\mathsf{k}}$ 
for $\mathsf{i}\neq \mathsf{k}$. In the same way, there exists $\mathsf{j}\in \mathsf{K}^{(\vect{m})}(\vecteta')$ such that $\vectgamma = \mathfrak{s}^{(\vect{m})}_{\mathsf{j}}(\vecteta')$
and
 $\gamma_{\mathsf{i}}=\eta'_{\mathsf{i}}<m_{\mathsf{j}}/2<\gamma_{\mathsf{j}}$ for $\mathsf{i}\neq \mathsf{j}$.
 We conclude that $\mathsf{j}=\mathsf{k}$ and
 $\vecteta'=\mathfrak{s}^{(\vect{m})}_{\mathsf{j}}(\mathfrak{s}^{(\vect{m})}_{\mathsf{j}}(\vecteta'))=\mathfrak{s}^{(\vect{m})}_{\mathsf{j}}(\vectgamma)
 =\mathfrak{s}^{(\vect{m})}_{\mathsf{k}}(\mathfrak{s}^{(\vect{m})}_{\mathsf{k}}(\vecteta))=\vecteta$. Thus, we have shown \eqref{1608292116}. 

\medskip
 
Now, combining the statements in c) with \eqref{1608251930}, \eqref{1608292115}, \eqref{1608292116}, and  \eqref{1609170228}, we obtain the statement in d).  

\medskip

e) If $\mathsf{d}=1$, we clearly have $\#[\vectgamma]=1$ for all $\vectgamma\in\overline{\vect{\Gamma}}^{(\vect{m})}_{\vect{\kappa}}$. Suppose that $\mathsf{d}\geq 2$. 
If $\vectgamma\in \vect{\Gamma}^{(\vect{m})}_{\vect{\kappa},0} \cup \vect{\Gamma}^{(\vect{m})}_{\vect{\kappa},1}$, then $\#[\vectgamma]=1$ by the definition of 
$\vectgamma\in\overline{\vect{\Gamma}}^{(\vect{m})}_{\vect{\kappa}}$ in \eqref{1608241504}.  Let
\[\vectgamma \in \vectGammacircvect{m} \setminus \left( \vect{\Gamma}^{(\vect{m})}_{\vect{\kappa},0} \cup \vect{\Gamma}^{(\vect{m})}_{\vect{\kappa},1} \right).\]
Then \eqref{1608292236} holds, and we can repeat the considerations below \eqref{1608292236}. 
But in this case, we have the stronger condition $\gamma_{\mathsf{i}}/m_{\mathsf{i}}+\gamma_{\mathsf{k}}/m_{\mathsf{k}}< 1$  for all $\mathsf{i} \neq \mathsf{k}$. 
Therefore, $\eta_{\mathsf{i}}/m_{\mathsf{i}}< \eta_{\mathsf{k}}/m_{\mathsf{k}}<1/2$  for all $\mathsf{i} \neq \mathsf{k}$. 
This implies  $\mathsf{K}^{(\vect{m})}(\vecteta)=\{\mathsf{k}\}$, and thus \[\#[\vectgamma]=\#\mathfrak{S}^{(\vect{m})}(\vecteta)=\#\mathsf{K}^{(\vect{m})}(\vecteta)=1.\]
 If $\vectgamma\in \mathfrak{S}^{(\vect{m})}(\vect{0})$, then $[\vectgamma]=\mathfrak{S}^{(\vect{m})}(\vect{0})$, and therefore $\#[\vectgamma]=\mathsf{d}\geq 2$. 
 Finally, let   
\[\vectgamma\notin\overline{\vect{\Gamma}}^{(\vect{m})}_{\vect{\kappa}}\setminus\left(\vectGammacircvect{m} \cup\mathfrak{S}^{(\vect{m})}(\vect{0})\cup \vect{\Gamma}^{(\vect{m})}_{\vect{\kappa},0} \cup \vect{\Gamma}^{(\vect{m})}_{\vect{\kappa},1}\right).\]

\vspace{-0.7em}

\noindent Again, we can repeat the considerations below \eqref{1608292236}. Since $\vectgamma\notin\vectGammacircvect{m}$, we have  $\gamma_{\mathsf{j}}/m_{\mathsf{j}}+\gamma_{\mathsf{j'}}/m_{\mathsf{j'}}= 1$ 
for some $\mathsf{j},\mathsf{j}'$ with  $\mathsf{j} \neq \mathsf{j}'$.
Since $\gamma_{\mathsf{i}}/m_{\mathsf{i}} < 1/2$  for $\mathsf{i} \neq \mathsf{k}$, we have $\mathsf{j}=\mathsf{k}$ or $\mathsf{j}'=\mathsf{k}$. Without restriction let $\mathsf{j}'=\mathsf{k}$. 
Then, $\eta_{\mathsf{j}}/m_{\mathsf{j}}=\gamma_{\mathsf{j}}/m_{\mathsf{j}}=1-\gamma_{\mathsf{k}}/m_{\mathsf{k}}=\eta_{\mathsf{k}}/m_{\mathsf{k}}$, and therefore,  $\mathsf{j},\mathsf{k}\in \mathsf{K}^{(\vect{m})}(\vecteta)$. We  conclude, using statement b),  that $\#[\vectgamma]=\#\mathfrak{S}^{(\vect{m})}(\vecteta) =\#\mathsf{K}^{(\vect{m})}(\vecteta)\geq \#\{\mathsf{j},\mathsf{k}\}=2$.
\end{proof}

In the next sections, we will also consider sets $\vect{\Gamma}^{(\vect{m})}_{\vect{\kappa}}\subseteq\overline{\vect{\Gamma}}^{(\vect{m})}_{\vect{\kappa}}$ 
of representatives of the classes in $\left[\overline{\vect{\Gamma}}^{(\vect{m})}_{\vect{\kappa}}\right]$, i.e.
sets that contain precisely one element of each class, i.e. 
\begin{equation}\label{1509301005}
\left[\overline{\vect{\Gamma}}^{(\vect{m})}_{\vect{\kappa}}\right]=\left\{\,[\vectgamma]\left|\,\vectgamma\in\vect{\Gamma}^{(\vect{m})}_{\vect{\kappa}} \right.\right\},\quad \#\vect{\Gamma}^{(\vect{m})}_{\vect{\kappa}}=\#\left[\overline{\vect{\Gamma}}^{(\vect{m})}_{\vect{\kappa}}\right]=\#\I^{(\vect{m})}_{\vect{\kappa}}.
\end{equation}
Then, Proposition \ref{201512041410} yields
$\vect{\Lambda}^{(\vect{m}),1}_{\vect{\kappa}}\subsetneqq \vect{\Gamma}^{(\vect{m})}_{\vect{\kappa}}\subseteq \overline{\vect{\Gamma}}^{(\vect{m})}_{\vect{\kappa}}$.
Further, if $\mathsf{d}\geq2$, we have $\vect{\Gamma}^{(\vect{m})}_{\vect{\kappa}} \subsetneqq \overline{\vect{\Gamma}}^{(\vect{m})}_{\vect{\kappa}}$, whereas for $\mathsf{d}=1$
we have $\vect{\Gamma}^{(\vect{m})}_{\vect{\kappa}} = \overline{\vect{\Gamma}}^{(\vect{m})}_{\vect{\kappa}}$. 
The unique element in the one-element set  $\vect{\Gamma}^{(\vect{m})}_{\vect{\kappa}}\cap \mathfrak{S}^{(\vect{m})}(\vect{0})$ will be denoted by $\vectgamma^{\ast}$, i.e.
\begin{equation}\label{1509301421}
\vect{\Gamma}^{(\vect{m})}_{\vect{\kappa}}\cap \mathfrak{S}^{(\vect{m})}(\vect{0})=\{\vectgamma^{\ast}\}.
\end{equation}

\begin{proposition} \label{201512151534}
Let $\vect{\kappa}\in\mathbb{Z}^{\mathsf{d}}$. The following statements are equivalent.
\begin{enumerate}[i)]
 \item We have $\gcd\{m_{\mathsf{i}},m_{\mathsf{j}}\}\leq 2$ for all $\mathsf{i},\mathsf{j}\in\{1,\ldots,\mathsf{d}\}$ with $\mathsf{i}\neq\mathsf{j}$.
 \item All classes in  $\left[\overline{\vect{\Gamma}}^{(\vect{m})}_{\vect{\kappa}}\right]\setminus\{\mathfrak{S}^{(\vect{m})}(\vect{0})\}$ consist of precisely one element.
\end{enumerate}
\noindent In this case, using notation \eqref{1608291955}, we can write $\overline{\vect{\Gamma}}^{(\vect{m})}_{\vect{\kappa}} = \vect{\Lambda}^{(\vect{m}),1}_{\vect{\kappa}}\cup \mathfrak{S}^{(\vect{m})}(\vect{0})$.
\end{proposition}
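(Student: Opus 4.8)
The plan is to convert condition (ii) into a purely arithmetic condition on $\vect{m}$, using the description of the class decomposition $\left[\overline{\vect{\Gamma}}^{(\vect{m})}_{\vect{\kappa}}\right]$ provided by Proposition~\ref{201512041410}. The case $\mathsf{d}=1$ is immediate: then (i) is vacuous and, by Proposition~\ref{201512041410}~e), every class is a singleton, so (ii) holds as well; moreover $\mathfrak{S}^{(\vect{m})}(\vect{0})=\{(m_1)\}$ and $\vect{\Lambda}^{(\vect{m}),1}_{\vect{\kappa}}=\{0,\ldots,m_1-1\}$, so the supplementary identity holds directly. Assume now $\mathsf{d}\geq 2$. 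By the definition \eqref{1608241504}, the classes of $\left[\overline{\vect{\Gamma}}^{(\vect{m})}_{\vect{\kappa}}\right]$ are the singletons $\{\vectgamma\}$ with $\vectgamma\in\vect{\Gamma}^{(\vect{m})}_{\vect{\kappa},0}\cup\vect{\Gamma}^{(\vect{m})}_{\vect{\kappa},1}$ together with the sets $\mathfrak{S}^{(\vect{m})}(\vecteta)$ for $\vecteta\in\vect{\Gamma}^{(\vect{m})}_{\vect{0},1}$, and $\#\mathfrak{S}^{(\vect{m})}(\vecteta)=\#\mathsf{K}^{(\vect{m})}(\vecteta)$ by Proposition~\ref{201512041410}~b), while distinct $\vecteta$ yield disjoint nonempty $\mathfrak{S}$-sets by \eqref{1608292116}. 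Hence condition (ii) is equivalent to the assertion that $\vecteta=\vect{0}$ is the \emph{only} element of $\vect{\Gamma}^{(\vect{m})}_{\vect{0},1}$ with $\#\mathsf{K}^{(\vect{m})}(\vecteta)\geq 2$. Recall also that, by \eqref{1608271930}, $\vecteta\in\vect{\Gamma}^{(\vect{m})}_{\vect{0},1}$ simply means $\eta_{\mathsf{i}}/m_{\mathsf{i}}<1/2$ for all $\mathsf{i}$.

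Next I would prove that (i) implies this reformulated version of (ii). Assume $\gcd\{m_{\mathsf{i}},m_{\mathsf{j}}\}\leq 2$ whenever $\mathsf{i}\neq\mathsf{j}$, and suppose for contradiction that $\vecteta\in\vect{\Gamma}^{(\vect{m})}_{\vect{0},1}$ satisfies $\vecteta\neq\vect{0}$ and $\#\mathsf{K}^{(\vect{m})}(\vecteta)\geq 2$. Since $\vecteta\neq\vect{0}$, the value $c=\max\nolimits^{(\vect{m})}[\vecteta]$ is positive, and choosing two distinct indices $\mathsf{i},\mathsf{j}\in\mathsf{K}^{(\vect{m})}(\vecteta)$ gives $\eta_{\mathsf{i}}/m_{\mathsf{i}}=\eta_{\mathsf{j}}/m_{\mathsf{j}}=c>0$. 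Writing $g=\gcd\{m_{\mathsf{i}},m_{\mathsf{j}}\}$, $m_{\mathsf{i}}=g\mu$, $m_{\mathsf{j}}=g\nu$ with $\gcd(\mu,\nu)=1$, the equality $\eta_{\mathsf{i}}\nu=\eta_{\mathsf{j}}\mu$ forces $\mu\mid\eta_{\mathsf{i}}$, say $\eta_{\mathsf{i}}=\mu t$ with $t\geq 1$, hence $c=t/g$. But $\vecteta\in\vect{\Gamma}^{(\vect{m})}_{\vect{0},1}$ gives $2\eta_{\mathsf{i}}<m_{\mathsf{i}}$, i.e.\ $2t<g$, so $g\geq 2t+1\geq 3$, contradicting (i). Therefore no such $\vecteta$ exists, i.e.\ (ii) holds.

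For the converse I would use the contrapositive: assuming (i) fails, I construct a witness against (ii). If $g=\gcd\{m_{\mathsf{i}},m_{\mathsf{j}}\}\geq 3$ for some $\mathsf{i}\neq\mathsf{j}$, put $\eta_{\mathsf{i}}=m_{\mathsf{i}}/g$, $\eta_{\mathsf{j}}=m_{\mathsf{j}}/g$, and $\eta_{\mathsf{k}}=0$ for $\mathsf{k}\notin\{\mathsf{i},\mathsf{j}\}$. Then $\eta_{\mathsf{i}}/m_{\mathsf{i}}=\eta_{\mathsf{j}}/m_{\mathsf{j}}=1/g<1/2$ and $\eta_{\mathsf{k}}/m_{\mathsf{k}}=0$ otherwise, so $\vecteta\in\vect{\Gamma}^{(\vect{m})}_{\vect{0},1}$, $\vecteta\neq\vect{0}$, and $\{\mathsf{i},\mathsf{j}\}\subseteq\mathsf{K}^{(\vect{m})}(\vecteta)$, whence $\#\mathsf{K}^{(\vect{m})}(\vecteta)\geq 2$. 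This contradicts (ii), so (i) and (ii) are equivalent. Finally, for the supplementary identity: under (ii), \eqref{1608241517} shows that $\overline{\vect{\Gamma}}^{(\vect{m})}_{\vect{\kappa}}\setminus\vect{\Lambda}^{(\vect{m}),1}_{\vect{\kappa}}$ equals the union of the classes with more than one element, which is precisely $\mathfrak{S}^{(\vect{m})}(\vect{0})$; since $\left[\overline{\vect{\Gamma}}^{(\vect{m})}_{\vect{\kappa}}\right]$ partitions $\overline{\vect{\Gamma}}^{(\vect{m})}_{\vect{\kappa}}$ by Proposition~\ref{201512041410}~d), this gives $\overline{\vect{\Gamma}}^{(\vect{m})}_{\vect{\kappa}}=\vect{\Lambda}^{(\vect{m}),1}_{\vect{\kappa}}\cup\mathfrak{S}^{(\vect{m})}(\vect{0})$.

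The only step beyond routine bookkeeping is the short number-theoretic argument in the middle two paragraphs: deducing $\gcd\{m_{\mathsf{i}},m_{\mathsf{j}}\}\geq 3$ from $\eta_{\mathsf{i}}/m_{\mathsf{i}}=\eta_{\mathsf{j}}/m_{\mathsf{j}}\in(0,1/2)$, and conversely exhibiting the witness $\vecteta$ when some pairwise gcd is at least $3$; everything else is a direct translation through Proposition~\ref{201512041410}.
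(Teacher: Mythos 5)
Your proof is correct and follows essentially the same route as the paper: both reduce condition ii) via Proposition \ref{201512041410} to the statement that $\#\mathsf{K}^{(\vect{m})}(\vecteta)=1$ for all $\vecteta\in\vect{\Gamma}^{(\vect{m})}_{\vect{0},1}\setminus\{\vect{0}\}$, and then run the same divisibility argument ($\gcd\{\mu,\nu\}=1$ forcing $\mu\mid\eta_{\mathsf{i}}$, incompatible with $2\eta_{\mathsf{i}}<m_{\mathsf{i}}$ when the gcd is at most $2$) together with the same explicit witness $\eta_{\mathsf{i}}=m_{\mathsf{i}}/g$, $\eta_{\mathsf{j}}=m_{\mathsf{j}}/g$ for the converse. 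Your treatment of the supplementary identity via \eqref{1608241517} is a welcome addition the paper leaves implicit.
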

\noindent In the setup of Proposition \ref{201512151534} we have $\vect{\Gamma}^{(\vect{m})}_{\vect{\kappa}} = \vect{\Lambda}^{(\vect{m}),1}_{\vect{\kappa}} \cup \{\vectgamma^{\ast}\}$ for  $\vect{\Gamma}^{(\vect{m})}_{\vect{\kappa}}\subseteq\overline{\vect{\Gamma}}^{(\vect{m})}_{\vect{\kappa}}$ with \eqref{1509301005},
where $\vectgamma^{\ast}$ is the representative of the class $\mathfrak{S}^{(\vect{m})}(\vect{0})$, i.e. \eqref{1509301421}. We only have to choose
$\vectgamma^{\ast} \in \mathfrak{S}^{(\vect{m})}(\vect{0})$ in order to  specify a set of representatives.

\medskip

\begin{proof} Both, i) and ii), are true for $\mathsf{d}=1$ in any case. Suppose that $\mathsf{d}\geq 2$.
By Proposition~\ref{201512041410}, condition ii) holds true if and only if \text{$\#\mathsf{K}^{(\vect{m})}(\vecteta)=1$ for all $\vecteta\in \vect{\Gamma}^{(\vect{m})}_{\vect{0},1}\setminus\{\vect{0}\}$.}

Assume that i) holds and $\vecteta\in\vect{\Gamma}^{(\vect{m})}_{\vect{0},1}\setminus\{\vect{0}\}$ and $\#\mathsf{K}^{(\vect{m})}(\vecteta)\geq 2$.  Let $\mathsf{j},\mathsf{k}\in \mathsf{K}^{(\vect{m})}(\vecteta)$ with $\mathsf{k}\neq \mathsf{j}$. We have $\eta_{\mathsf{j}},\eta_{\mathsf{k}}>0$ and \text{$\eta_{\mathsf{k}}/m_{\mathsf{k}} = \eta_{\mathsf{j}}/m_{\mathsf{j}}$}.
Let \text{$g=\gcd\{m_{\mathsf{j}},m_{\mathsf{k}}\}$}, $\mu_{\mathsf{j}}=m_{\mathsf{j}}/g$, $\mu_{\mathsf{k}}=m_{\mathsf{k}}/g$.
By i) we have $g\leq 2$, and, therefore  $0<\eta_{\mathsf{k}}<m_{\mathsf{k}}/2=g\mu_{\mathsf{k}}/2\leq \mu_{\mathsf{k}}$. The relation $\eta_{\mathsf{k}}\mu_{\mathsf{j}}=\eta_{\mathsf{j}}\mu_{\mathsf{k}}$ and $\gcd\{\mu_{\mathsf{j}},\mu_{\mathsf{k}}\}=1$ imply that  $\eta_{\mathsf{k}}$  is an integer multiple of $\mu_{\mathsf{k}}$. This is a contradiction to $0<\eta_{\mathsf{k}}<\mu_{\mathsf{k}}$.

Now, assume that i) is not satisfied. Then, there are $\mathsf{j},\mathsf{k}$ with $\mathsf{j}\neq \mathsf{k}$ such that \text{$g=\gcd\{m_{\mathsf{j}},m_{\mathsf{k}}\}$} satisfies $g>2$. Let $\eta_j=m_{\mathsf{j}}/g$, $\eta_k=m_{\mathsf{k}}/g$, and
$\eta_{\mathsf{i}}=0$ for $\mathsf{i}\notin\{\mathsf{j},\mathsf{k}\}$. Then, we have $\eta_{\mathsf{k}}/m_{\mathsf{k}}=\eta_{\mathsf{j}}/m_{\mathsf{j}}>\eta_{\mathsf{i}}/m_{\mathsf{i}}$ for $\mathsf{i}\notin\{\mathsf{j},\mathsf{k}\}$.  We have $\vecteta\in \vect{\Gamma}^{(\vect{m})}_{\vect{0},1}$, since $g>2$. We conclude $\vecteta\in \vect{\Gamma}^{(\vect{m})}_{\vect{0},1}\setminus\{\vect{0}\}$ and $\mathsf{K}^{(\vect{m})}(\vecteta)=\{\mathsf{j},\mathsf{k}\}$ with $\mathsf{k}\neq \mathsf{j}$, in particular $\#\mathsf{K}^{(\vect{m})}(\vecteta)=2$. Therefore, if i) is not satisfied, then condition ii) is not satisfied.
\end{proof}


\section{Discrete orthogonality structure}
\label{17008191036}

For the proof of the interpolation and quadrature formulas on the Lissajous-Chebyshev node points, the key ingredient 
in this work is, as in  \cite{DenckerErb2015a}, a discrete orthogonality structure on the index set $\I^{(\vect{m})}_{\vect{\kappa}}.$ 
For $\vectgamma\in \mathbb{N}_0^{\mathsf{d}}$, we define $\dchi^{(\vect{m})}_{\indexvectgamma}:\,\I^{(\vect{m})}_{\vect{\kappa}}\to\mathbb{R}$ by
\begin{equation}\label{1509241334}
\displaystyle\dchi^{(\vect{m})}_{\indexvectgamma}(\vect{i})=\tprod_{\mathsf{i}=1}^{\mathsf{d}}\cos(\gamma_{\mathsf{i}}i_{\mathsf{i}}\pi/m_{\mathsf{i}}).
\end{equation}
We remark that the considered domain $\I^{(\vect{m})}_{\vect{\kappa}}$ depends on $\vect{\kappa}$ and therefore also the functions $\dchi^{(\vect{m})}_{\indexvectgamma}$. 
We omit the explicit indication of this dependency.

We remind that for  $\vectgamma\in\overline{\vect{\Gamma}}^{(\vect{m})}_{\vect{\kappa}}$,
the class of $\vectgamma$ in \eqref{1608241504} is denoted by 
$[\vectgamma]$, and that $\#[\vectgamma]=1$ if $\vectgamma \in\vect{\Gamma}^{(\vect{m})}_{\vect{\kappa},0}\cup\vect{\Gamma}^{(\vect{m})}_{\vect{\kappa},1}$. We 
study the dependencies of $\displaystyle\dchi^{(\vect{m})}_{\indexvectgamma}$ in terms of different representatives $\vectgamma$ of a class $[\vectgamma]$. For $\vectgamma\in \overline{\vect{\Gamma}}^{(\vect{m})}_{\vect{\kappa}}\setminus\left(\vect{\Gamma}^{(\vect{m})}_{\vect{\kappa},0} \cup \vect{\Gamma}^{(\vect{m})}_{\vect{\kappa},1}\right)$,  we define
\[\vect{\kappa}[\vectgamma]=\kappa_{\mathsf{k}}\qquad \text{where the index $\mathsf{k}$ is uniquely given by \eqref{1608300914}}.\]
For all $\vectgamma'\in [\vectgamma]$, $\vectgamma\in\overline{\vect{\Gamma}}^{(\vect{m})}_{\vect{\kappa}}$, we further introduce
\begin{equation}\label{1609080610}
\vect{\kappa}[\vectgamma,\vectgamma']=
 \left\{ \begin{array}{cl}  0,\; & \text{if}\quad \vectgamma'=\vectgamma \in\vect{\Gamma}^{(\vect{m})}_{\vect{\kappa},0}\cup\vect{\Gamma}^{(\vect{m})}_{\vect{\kappa},1},\\
   \vect{\kappa}[\vectgamma']-\vect{\kappa}[\vectgamma],\; & \text{if}\quad \vectgamma'\in [\vectgamma],\ \vectgamma\in\overline{\vect{\Gamma}}^{(\vect{m})}_{\vect{\kappa}}\setminus\left(\vect{\Gamma}^{(\vect{m})}_{\vect{\kappa},0}\cup\vect{\Gamma}^{(\vect{m})}_{\vect{\kappa},1}\right).
\end{array} \right. 
\end{equation} 
Note that for all $\vectgamma'\in [\vectgamma]$, $\vectgamma\in\overline{\vect{\Gamma}}^{(\vect{m})}_{\vect{\kappa}}$ we have
\begin{equation}\label{1609080605}
\vect{\kappa}[\vectgamma,\vectgamma']=0\qquad \text{if}\quad   \#[\vectgamma]=1.
\end{equation}

\begin{proposition}\label{1609080801}
We have  
$\dchi^{(\vect{m})}_{\indexvectgamma'}(\vect{i})=(-1)^{\vect{\kappa}[\indexvectgamma,\indexvectgamma']}\dchi^{(\vect{m})}_{\indexvectgamma}(\vect{i})$, $\vect{i}\in \I^{(\vect{m})}_{\vect{\kappa}}$, $\vectgamma'\in[\vectgamma]$, $\vectgamma\in\overline{\vect{\Gamma}}^{(\vect{m})}_{\vect{\kappa}}$.
\end{proposition}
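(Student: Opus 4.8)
The plan is to verify the claimed sign-change identity by a direct case analysis according to how $\vectgamma'$ sits inside the class $[\vectgamma]$, reducing everything to a single one-dimensional trigonometric observation. The crucial elementary fact is: if $\vect{i}\in\I^{(\vect{m})}_{\vect{\kappa}}$ and $\mathsf{k}$ is an index with $i_{\mathsf{k}}\equiv\kappa_{\mathsf{k}}-\mathfrak{r}\tmod 2$ for the appropriate $\mathfrak{r}\in\{0,1\}$, then $\cos\big((m_{\mathsf{k}}-\gamma_{\mathsf{k}})i_{\mathsf{k}}\pi/m_{\mathsf{k}}\big)=\cos\big(i_{\mathsf{k}}\pi-\gamma_{\mathsf{k}}i_{\mathsf{k}}\pi/m_{\mathsf{k}}\big)=(-1)^{i_{\mathsf{k}}}\cos(\gamma_{\mathsf{k}}i_{\mathsf{k}}\pi/m_{\mathsf{k}})$. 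So replacing $\gamma_{\mathsf{k}}$ by $m_{\mathsf{k}}-\gamma_{\mathsf{k}}$ in the $\mathsf{k}$-th factor of $\dchi^{(\vect{m})}_{\indexvectgamma}(\vect{i})$ multiplies it by $(-1)^{i_{\mathsf{k}}}$.

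First I would dispose of the trivial case: if $\#[\vectgamma]=1$, i.e. $\vectgamma\in\vect{\Gamma}^{(\vect{m})}_{\vect{\kappa},0}\cup\vect{\Gamma}^{(\vect{m})}_{\vect{\kappa},1}$, then $[\vectgamma]=\{\vectgamma\}$ forces $\vectgamma'=\vectgamma$, while $\vect{\kappa}[\vectgamma,\vectgamma']=0$ by definition \eqref{1609080610}, so the identity is $\dchi^{(\vect{m})}_{\indexvectgamma}(\vect{i})=\dchi^{(\vect{m})}_{\indexvectgamma}(\vect{i})$. For the main case I would take $\vectgamma\in\overline{\vect{\Gamma}}^{(\vect{m})}_{\vect{\kappa}}\setminus\big(\vect{\Gamma}^{(\vect{m})}_{\vect{\kappa},0}\cup\vect{\Gamma}^{(\vect{m})}_{\vect{\kappa},1}\big)$, so by Proposition~\ref{201512041410}~c) there is a unique $\mathsf{k}=\mathsf{k}(\vectgamma)$ with \eqref{1608300914}, and $\vecteta:=\mathfrak{s}^{(\vect{m})}_{\mathsf{k}}(\vectgamma)\in\vect{\Gamma}^{(\vect{m})}_{\vect{0},1}$ with $[\vectgamma]=\mathfrak{S}^{(\vect{m})}(\vecteta)$. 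Thus every $\vectgamma'\in[\vectgamma]$ has the form $\vectgamma'=\mathfrak{s}^{(\vect{m})}_{\mathsf{k}'}(\vecteta)$ for some $\mathsf{k}'\in\mathsf{K}^{(\vect{m})}(\vecteta)$, and $\mathsf{k}'=\mathsf{k}(\vectgamma')$. Since $\vecteta$ and $\vectgamma,\vectgamma'$ agree in all coordinates except possibly $\mathsf{k},\mathsf{k}'$, I compute $\dchi^{(\vect{m})}_{\indexvectgamma'}(\vect{i})/\dchi^{(\vect{m})}_{\indexvecteta}(\vect{i})$ and $\dchi^{(\vect{m})}_{\indexvectgamma}(\vect{i})/\dchi^{(\vect{m})}_{\indexvecteta}(\vect{i})$ using the one-dimensional fact above: the first ratio is $(-1)^{i_{\mathsf{k}'}}$ and the second is $(-1)^{i_{\mathsf{k}}}$ (when $\mathsf{k}=\mathsf{k}'$ both are the same and the claim is immediate since then $\vectgamma'=\vectgamma$). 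Hence $\dchi^{(\vect{m})}_{\indexvectgamma'}(\vect{i})=(-1)^{i_{\mathsf{k}'}-i_{\mathsf{k}}}\dchi^{(\vect{m})}_{\indexvectgamma}(\vect{i})$, where I may drop absolute values on the exponent since only parity matters.

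It then remains to identify $(-1)^{i_{\mathsf{k}'}-i_{\mathsf{k}}}$ with $(-1)^{\vect{\kappa}[\indexvectgamma,\indexvectgamma']}=(-1)^{\kappa_{\mathsf{k}'}-\kappa_{\mathsf{k}}}$. This is exactly where membership $\vect{i}\in\I^{(\vect{m})}_{\vect{\kappa}}$ enters: by \eqref{1509221632} there is a single $\mathfrak{r}\in\{0,1\}$ with $i_{\mathsf{j}}\equiv\kappa_{\mathsf{j}}-\mathfrak{r}\tmod 2$ for \emph{all} $\mathsf{j}$, so $i_{\mathsf{k}'}-i_{\mathsf{k}}\equiv(\kappa_{\mathsf{k}'}-\mathfrak{r})-(\kappa_{\mathsf{k}}-\mathfrak{r})=\kappa_{\mathsf{k}'}-\kappa_{\mathsf{k}}\pmod 2$, giving the desired equality of signs. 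The main (and only) subtlety is bookkeeping: being careful that the unique index attached to $\vectgamma'$ by \eqref{1608300914} is precisely the index $\mathsf{k}'$ used to form $\vectgamma'=\mathfrak{s}^{(\vect{m})}_{\mathsf{k}'}(\vecteta)$ — this follows from Proposition~\ref{201512041410}~b) and~c), which guarantee $\gamma'_{\mathsf{k}'}=m_{\mathsf{k}'}-\eta_{\mathsf{k}'}>m_{\mathsf{k}'}/2$ while $\gamma'_{\mathsf{i}}=\eta_{\mathsf{i}}<m_{\mathsf{i}}/2$ for $\mathsf{i}\neq\mathsf{k}'$ — and that $\vect{\kappa}[\vectgamma,\vectgamma']=\vect{\kappa}[\vectgamma']-\vect{\kappa}[\vectgamma]=\kappa_{\mathsf{k}'}-\kappa_{\mathsf{k}}$ by the very definitions preceding the proposition. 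No genuine difficulty arises beyond this careful tracking of indices.
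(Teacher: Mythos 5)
Your proof is correct and follows essentially the same route as the paper: dispose of singleton classes, then write $\vectgamma=\mathfrak{s}^{(\vect{m})}_{\mathsf{k}}(\vecteta)$, $\vectgamma'=\mathfrak{s}^{(\vect{m})}_{\mathsf{k}'}(\vecteta)$ for a common $\vecteta\in\vect{\Gamma}^{(\vect{m})}_{\vect{0},1}$ and pick up the factor $(-1)^{i_{\mathsf{k}}}=(-1)^{\kappa_{\mathsf{k}}-\mathfrak{r}}$ for each reflected coordinate, exactly as in the paper's (terser) argument. The only slip is the parenthetical ``$\#[\vectgamma]=1$, i.e.\ $\vectgamma\in\vect{\Gamma}^{(\vect{m})}_{\vect{\kappa},0}\cup\vect{\Gamma}^{(\vect{m})}_{\vect{\kappa},1}$'' --- singleton classes also arise from elements of $\vectGammacircvect{m}$ outside these two sets --- but this is harmless because your main case in fact covers every $\vectgamma\notin\vect{\Gamma}^{(\vect{m})}_{\vect{\kappa},0}\cup\vect{\Gamma}^{(\vect{m})}_{\vect{\kappa},1}$, including those with $\mathsf{k}'=\mathsf{k}$.
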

\begin{proof} If $\#[\vectgamma]=1$, then the assertion is trivial. Let $\#[\vectgamma]>1$. Then, by  Proposition~\ref{201512041410} we have
$\vectgamma,\vectgamma'\in \overline{\vect{\Gamma}}^{(\vect{m})}_{\vect{\kappa}}\setminus\left(\vect{\Gamma}^{(\vect{m})}_{\vect{\kappa},0}\cup\vect{\Gamma}^{(\vect{m})}_{\vect{\kappa},1}\right)$, and 
$\vectgamma=\mathfrak{s}^{(\vect{m})}_{\mathsf{k}}(\vecteta)$, $\vectgamma'=\mathfrak{s}^{(\vect{m})}_{\mathsf{k}'}(\vecteta)$ with $\mathsf{k},\mathsf{k}'\in\mathsf{K}^{(\vect{m})}(\vecteta)$ for some 
$\vecteta\in \vect{\Gamma}^{(\vect{m})}_{\vect{0},1}$.
 For $\vect{i}\in\I^{(\vect{m})}_{\vect{\kappa},\mathfrak{r}}$, $\mathfrak{r}\in\{0,1\}$, we derive the identities 
\text{$\dchi^{(\vect{m})}_{\indexvectgamma'}(\vect{i})=(-1)^{r+\kappa_{\mathsf{k}'}}\dchi^{(\vect{m})}_{\indexvecteta}(\vect{i})$} and
$\dchi^{(\vect{m})}_{\indexvectgamma}(\vect{i})=(-1)^{r+\kappa_{\mathsf{k}}}\dchi^{(\vect{m})}_{\indexvecteta}(\vect{i})$, and get the assertion.
\end{proof}

For $\vect{i}\in\I^{(\vect{m})}_{\vect{\kappa}}$, using \eqref{1609080206}, we define the weights $\mathfrak{w}^{(\vect{m})}_{\vect{\kappa},\vect{i}}$ by 
\begin{equation}\label{1608311812}
\mathfrak{w}^{(\vect{m})}_{\vect{\kappa},\vect{i}}=2^{\#\mathsf{M}}/(2\p[\vect{m}]) \quad \text{if}\ \ \vect{i}\in\I^{(\vect{m})}_{\vect{\kappa},\mathsf{M}}.
\end{equation}
A measure $\omega^{(\vect{m})}_{\vect{\kappa}}$ on the power set $\mathcal{P}(\I^{(\vect{m})}_{\vect{\kappa}})$  of $\I^{(\vect{m})}_{\vect{\kappa}}$  is
well-defined by  $\omega^{(\vect{m})}_{\vect{\kappa}}(\{\vect{i}\})=\mathfrak{w}^{(\vect{m})}_{\vect{\kappa},\vect{i}}$ 
for the one-element sets $\{\vect{i}\}\in\mathcal{P}(\I^{(\vect{m})}_{\vect{\kappa}})$. For a function $h \in \mathcal{L}(\I^{(\vect{m})}_{\vect{\kappa}})$, we have
\[ \int h \, \mathrm{d}\rule{1pt}{0pt}\omega^{(\vect{m})}_{\vect{\kappa}} = \sum_{\vect{i} \in \I^{(\vect{m})}_{\vect{\kappa}}} h(\vect{i}) \ \mathfrak{w}^{(\vect{m})}_{\vect{\kappa},\vect{i}}. \]

\begin{proposition}\label{1509231358}
Let  $\vectgamma\in\mathbb{N}_0^{\mathsf{d}}$ and $\displaystyle\displaystyle\dchi^{(\vect{m})}_{\indexvectgamma}\in \mathcal{L}(\I^{(\vect{m})}_{\vect{\kappa}})$. If $\tint\displaystyle\dchi^{(\vect{m})}_{\indexvectgamma}\mathrm{d}\rule{1pt}{0pt}\omega^{(\vect{m})}_{\vect{\kappa}}\neq 0$, then

\vspace{-1em}

\begin{equation}\label{1509222116}
\text{there exists $\vect{h}\in \mathbb{N}_0^{\mathsf{d}}$ with $\gamma_{\mathsf{i}}=h_{\mathsf{i}}m_{\mathsf{i}}$, $\mathsf{i}=1,\ldots,\mathsf{d}$,  and
$\tsum_{\mathsf{i}=1}^{\mathsf{d}}h_{\mathsf{i}}\in 2\mathbb{N}_0$}.
\end{equation}

\vspace{-0.5em}

\noindent If \eqref{1509222116} is satisfied, then
$\tint\displaystyle\dchi^{(\vect{m})}_{\indexvectgamma}\mathrm{d}\rule{1pt}{0pt}\omega^{(\vect{m})}_{\vect{\kappa}}=(-1)^{\vartheta_{\vect{\kappa}}(\indexvectgamma)}$ with $\vartheta_{\vect{\kappa}}(\vectgamma)=\tsum_{\mathsf{i}=1}^{\mathsf{d}}h_{\mathsf{i}}\kappa_{\mathsf{i}}$.
\end{proposition}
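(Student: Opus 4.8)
The plan is to transport the weighted sum from $\I^{(\vect{m})}_{\vect{\kappa}}$ to $H^{(\vect{m}^{\sharp})}\times\vect{R}^{(\vect{m}^{\flat})}$ using Proposition~\ref{1509221521}, and then to evaluate the resulting sum by elementary harmonic analysis. Fix a decomposition $\vect{m}=\vect{m}^{\flat}\vect{m}^{\sharp}$ as in Proposition~\ref{1509061252}. The sets $\I^{(\vect{m})}_{\vect{\kappa},\mathsf{M}}$, $\mathsf{M}\subseteq\{1,\ldots,\mathsf{d}\}$, partition $\I^{(\vect{m})}_{\vect{\kappa}}$, and by Proposition~\ref{1509221521}~c) the fibre $\vectj^{-1}(\vect{i})$ has exactly $2^{\#\mathsf{M}}$ elements whenever $\vect{i}\in\I^{(\vect{m})}_{\vect{\kappa},\mathsf{M}}$; since the weight in \eqref{1608311812} is $\mathfrak{w}^{(\vect{m})}_{\vect{\kappa},\vect{i}}=2^{\#\mathsf{M}}/(2\p[\vect{m}])$ in that case, the weights precisely account for the multiplicities, so that
\[\tint\dchi^{(\vect{m})}_{\indexvectgamma}\,\mathrm{d}\rule{1pt}{0pt}\omega^{(\vect{m})}_{\vect{\kappa}}=\frac1{2\p[\vect{m}]}\tsum_{(l,\vectrho)\in H^{(\vect{m}^{\sharp})}\times\vect{R}^{(\vect{m}^{\flat})}}\dchi^{(\vect{m})}_{\indexvectgamma}(\vectj(l,\vectrho)).\]
Inserting the defining congruence \eqref{1509221526} and using that the cosine is even and $2\pi$-periodic, the sign vector $\vect{v}$ drops out and one obtains $\dchi^{(\vect{m})}_{\indexvectgamma}(\vectj(l,\vectrho))=\tprod_{\mathsf{i}=1}^{\mathsf{d}}\cos\bigl(\gamma_{\mathsf{i}}(l-2\rho_{\mathsf{i}}m^{\sharp}_{\mathsf{i}}-\kappa_{\mathsf{i}})\pi/m_{\mathsf{i}}\bigr)$.

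To prove that nonvanishing of the integral forces \eqref{1509222116}, I would expand each factor via $\cos\theta=\tfrac12(e^{i\theta}+e^{-i\theta})$, so that the product becomes a sum over $\vect{\varepsilon}\in\{-1,1\}^{\mathsf{d}}$ of the exponentials $\exp\bigl(i\sum_{\mathsf{i}}\varepsilon_{\mathsf{i}}\gamma_{\mathsf{i}}(l-2\rho_{\mathsf{i}}m^{\sharp}_{\mathsf{i}}-\kappa_{\mathsf{i}})\pi/m_{\mathsf{i}}\bigr)$; using $m_{\mathsf{i}}=m^{\flat}_{\mathsf{i}}m^{\sharp}_{\mathsf{i}}$ this separates into a power of $e^{i\pi\sum_{\mathsf{i}}\varepsilon_{\mathsf{i}}\gamma_{\mathsf{i}}/m_{\mathsf{i}}}$ to the exponent $l$, a product of powers of $e^{-2\pi i\varepsilon_{\mathsf{i}}\gamma_{\mathsf{i}}/m^{\flat}_{\mathsf{i}}}$ to the exponents $\rho_{\mathsf{i}}$, and a factor independent of $(l,\vectrho)$ of modulus $1$. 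The sum over $l\in H^{(\vect{m}^{\sharp})}$ is then a geometric sum of length $2\p[\vect{m}^{\sharp}]$ whose common ratio, raised to this length, equals $1$ because $\lcm[\vect{m}]=\p[\vect{m}^{\sharp}]$ and hence $\p[\vect{m}^{\sharp}]/m_{\mathsf{i}}\in\mathbb{Z}$; therefore it vanishes unless $\sum_{\mathsf{i}}\varepsilon_{\mathsf{i}}\gamma_{\mathsf{i}}/m_{\mathsf{i}}\in2\mathbb{Z}$. Likewise the sum over each $\rho_{\mathsf{i}}\in\{0,\ldots,m^{\flat}_{\mathsf{i}}-1\}$ is a geometric sum of length $m^{\flat}_{\mathsf{i}}$ that vanishes unless $m^{\flat}_{\mathsf{i}}\mid\gamma_{\mathsf{i}}$. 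Consequently, if the integral is nonzero, then for at least one $\vect{\varepsilon}$ the corresponding geometric sums do not vanish, i.e. $m^{\flat}_{\mathsf{i}}\mid\gamma_{\mathsf{i}}$ for every $\mathsf{i}$ and $\sum_{\mathsf{i}}\varepsilon_{\mathsf{i}}\gamma_{\mathsf{i}}/m_{\mathsf{i}}\in2\mathbb{Z}$.

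It remains to strengthen $m^{\flat}_{\mathsf{i}}\mid\gamma_{\mathsf{i}}$ to $m_{\mathsf{i}}\mid\gamma_{\mathsf{i}}$, which is the one genuinely arithmetic point. Writing $\gamma_{\mathsf{i}}=m^{\flat}_{\mathsf{i}}g_{\mathsf{i}}$ with $g_{\mathsf{i}}\in\mathbb{N}_0$, the condition $\sum_{\mathsf{i}}\varepsilon_{\mathsf{i}}\gamma_{\mathsf{i}}/m_{\mathsf{i}}\in2\mathbb{Z}$ becomes $\sum_{\mathsf{i}}\varepsilon_{\mathsf{i}}g_{\mathsf{i}}/m^{\sharp}_{\mathsf{i}}\in2\mathbb{Z}$. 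Since the $m^{\sharp}_{\mathsf{i}}$ are pairwise relatively prime by \eqref{1509091200C}, a sum of fractions with these denominators can only be an integer if $m^{\sharp}_{\mathsf{i}}\mid g_{\mathsf{i}}$ for every $\mathsf{i}$: multiplying by $\p[\vect{m}^{\sharp}]$ and reducing modulo a fixed $m^{\sharp}_{\mathsf{j}}$ kills all summands with $\mathsf{i}\neq\mathsf{j}$, and $\gcd\{\p[\vect{m}^{\sharp}]/m^{\sharp}_{\mathsf{j}},m^{\sharp}_{\mathsf{j}}\}=1$ then yields $m^{\sharp}_{\mathsf{j}}\mid g_{\mathsf{j}}$ (this is essentially the argument of Lemma~\ref{1708311652}). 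Hence $m_{\mathsf{i}}=m^{\flat}_{\mathsf{i}}m^{\sharp}_{\mathsf{i}}\mid\gamma_{\mathsf{i}}$; putting $\gamma_{\mathsf{i}}=h_{\mathsf{i}}m_{\mathsf{i}}$ with $h_{\mathsf{i}}\in\mathbb{N}_0$, the surviving parity condition $\sum_{\mathsf{i}}\varepsilon_{\mathsf{i}}h_{\mathsf{i}}\in2\mathbb{Z}$ is the same as $\sum_{\mathsf{i}}h_{\mathsf{i}}\in2\mathbb{N}_0$, which is exactly \eqref{1509222116}.

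For the converse and the explicit value, assume \eqref{1509222116} and substitute $\gamma_{\mathsf{i}}=h_{\mathsf{i}}m_{\mathsf{i}}$ directly into the formula for $\dchi^{(\vect{m})}_{\indexvectgamma}(\vectj(l,\vectrho))$ obtained in the first paragraph: each factor equals $\cos\bigl(h_{\mathsf{i}}(l-2\rho_{\mathsf{i}}m^{\sharp}_{\mathsf{i}}-\kappa_{\mathsf{i}})\pi\bigr)=(-1)^{h_{\mathsf{i}}l}(-1)^{h_{\mathsf{i}}\kappa_{\mathsf{i}}}$, so the product is $(-1)^{l\sum_{\mathsf{i}}h_{\mathsf{i}}}(-1)^{\sum_{\mathsf{i}}h_{\mathsf{i}}\kappa_{\mathsf{i}}}=(-1)^{\vartheta_{\vect{\kappa}}(\indexvectgamma)}$ because $\sum_{\mathsf{i}}h_{\mathsf{i}}$ is even. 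This is independent of $(l,\vectrho)$, and since $\#\bigl(H^{(\vect{m}^{\sharp})}\times\vect{R}^{(\vect{m}^{\flat})}\bigr)=2\p[\vect{m}^{\sharp}]\p[\vect{m}^{\flat}]=2\p[\vect{m}]$, dividing by $2\p[\vect{m}]$ gives the asserted value $(-1)^{\vartheta_{\vect{\kappa}}(\indexvectgamma)}$. The only step that is more than bookkeeping with the relations among $\vect{m},\vect{m}^{\sharp},\vect{m}^{\flat}$ is the divisibility argument of the third paragraph.
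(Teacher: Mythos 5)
Your proof is correct and follows essentially the same route as the paper's: both transport the weighted sum to $H^{(\vect{m}^{\sharp})}\times\vect{R}^{(\vect{m}^{\flat})}$ via Proposition~\ref{1509221521} (the weights $2^{\#\mathsf{M}}/(2\p[\vect{m}])$ exactly compensating the fibre cardinalities), reduce nonvanishing to conditions on character sums over $l$ and $\vectrho$, and use the pairwise coprimality of the $m^{\sharp}_{\mathsf{i}}$ to upgrade $m^{\flat}_{\mathsf{i}}\mid\gamma_{\mathsf{i}}$ to $m_{\mathsf{i}}\mid\gamma_{\mathsf{i}}$. The only cosmetic difference is that you eliminate the sign vector factorwise and then work with complex exponentials and geometric series, where the paper first applies the real product-to-sum identity and then the closed form for $\tsum_{l}\cos(l\vartheta-\vartheta_0)$.
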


\begin{proof} We use the well-known trigonometric identities
\begin{equation}\label{1509230525}
\tprod_{\mathsf{i}=1}^{\mathsf{r}}\cos(\vartheta_{\mathsf{i}})=
\dfrac1{2^{\mathsf{r}}}\tsum_{\vect{v}\in\{-1,1\}^{\mathsf{r}}}\cos(v_1\vartheta_1+\cdots+v_{\mathsf{r}}\vartheta_{\mathsf{r}}), \quad \mathsf{r} \in \mathbb{N},
\end{equation}
and
\begin{equation}\label{1509230526} 
\tsum_{l=0}^N \cos(l\vartheta-\vartheta_0)=\dfrac{\sin\left((N+1)\vartheta/2\right)\, \cos \left(N\vartheta/2-\vartheta_0\right) }{\sin(\vartheta/2)},\quad \vartheta\notin 2\pi\mathbb{Z},\  N\in\mathbb{N}_0.
\end{equation}
Using \eqref{1509230525} with $\mathsf{r} = \mathsf{d}$, we obtain
\begin{align*}
\tint\dchi^{(\vect{m})}_{\indexvectgamma}\mathrm{d}\rule{1pt}{0pt}\omega^{(\vect{m})}_{\vect{\kappa}}
&= \dfrac1{2\p[\vect{m}]}\tsum_{\mathsf{M}\subseteq \{1, \ldots,\mathsf{d}\}}2^{\#\mathsf{M}}\tsum_{\vect{i}\in\I^{(\vect{m})}_{\vect{\kappa},\mathsf{M}}}\tprod_{\mathsf{i}=1}^{\mathsf{d}}\cos(\gamma_{\mathsf{i}}i_{\mathsf{i}}\pi/m_{\mathsf{i}})\\
 &=\dfrac1{2^{\mathsf{d}}2\p[\vect{m}]}\tsum_{\vect{v}'\in \{-1,1\}^{\mathsf{d}}} \tsum_{\mathsf{M}\subseteq \{1, \ldots,\mathsf{d}\}}
2^{\#\mathsf{M}}\tsum_{\vect{i}\in\I^{(\vect{m})}_{\vect{\kappa},\mathsf{M}}}
\cos\left(\pi\tsum_{\mathsf{i}=1}^{\mathsf{d}}v'_{\mathsf{i}}\gamma_{\mathsf{i}}i_{\mathsf{i}}/m_{\mathsf{i}}\right).
\end{align*}
Let  $\vect{m}^{\sharp},\vect{m}^{\flat}\in\mathbb{N}^{\mathsf{d}}$  satisfy \eqref{1509091200A}, \eqref{1509091200B}, \eqref{1509091200C} and \eqref{1509091200D}.
We remind the statements of Proposition \ref{1509221521}. If $\vectj(l,\vectrho)=\vect{i}$, then \eqref{1509221526} is satisfied for some (not necessarily  uniquely determined) $\vect{v}=\vect{v}(l,\vectrho)\in\{-1,1\}^{\mathsf{d}}$. Hence, for all $\vect{v}'\in\{-1,1\}^{\mathsf{d}}$ we get
\[\cos\left(\pi\tsum_{\mathsf{i}=1}^{\mathsf{d}}v'_{\mathsf{i}}\gamma_{\mathsf{i}}i_{\mathsf{i}}/m_{\mathsf{i}}\right)=
\cos\left(\pi\tsum_{\mathsf{i}=1}^{\mathsf{d}}v'_{\mathsf{i}}v_{\mathsf{i}}(l,\vectrho)\gamma_{\mathsf{i}}\left(l-2\rho_{\mathsf{i}}m^{\sharp}_{\mathsf{i}}-\kappa_{\mathsf{i}}\right)/m_{\mathsf{i}}\right).\] 
Using  Proposition~\ref{1509221521},~c), we obtain for $\tint\displaystyle\dchi^{(\vect{m})}_{\indexvectgamma}\mathrm{d}\rule{1pt}{0pt}\omega^{(\vect{m})}_{\vect{\kappa}}$ the value
\[\dfrac1{2^{\mathsf{d}}2\p[\vect{m}]}\tsum_{\vect{v}\in \{-1,1\}^{\mathsf{d}}}\tsum_{\rho_1=0}^{m^{\flat}_1-1}\cdots\tsum_{\rho_{\mathsf{d}}=0}^{m^{\flat}_{\mathsf{d}}-1}\tsum_{l=0}^{2\p[\vect{m}^{\sharp}]-1}
\cos\left(\pi l Q(\vectgamma,\vect{v})-2\pi\tsum_{\mathsf{i}=1}^{\mathsf{d}} \rho_{\mathsf{i}}\dfrac{v_{\mathsf{i}}\gamma_{\mathsf{i}}}{m^{\flat}_{\mathsf{i}}}-\pi \vartheta_{\vect{\kappa}}(\vectgamma,\vect{v})\right),\]
with
\[Q(\vectgamma,\vect{v})= \tsum_{\mathsf{i}=1}^{\mathsf{d}} v_{\mathsf{i}}\gamma_{\mathsf{i}}/m_{\mathsf{i}},\qquad \vartheta_{\vect{\kappa}}(\vectgamma,\vect{v})=\tsum_{\mathsf{i}=1}^{\mathsf{d}} v_{\mathsf{i}}\gamma_{\mathsf{i}}\kappa_{\mathsf{i}}/m_{\mathsf{i}}.\]

By the identity \eqref{1509230526}, the value $\tint\displaystyle\dchi^{(\vect{m})}_{\indexvectgamma}\mathrm{d}\rule{1pt}{0pt}\omega^{(\vect{m})}_{\vect{\kappa}}$  is zero if for all  $\vect{v}\in\{-1,1\}^{\mathsf{d}}$  the number 
$Q(\vectgamma,\vect{v})$ is not an element of  $2\mathbb{Z}$, or if one of the $\gamma_{\mathsf{i}}/m^{\flat}_{\mathsf{i}}$ 
is not an element of~$\mathbb{Z}$. 
Assume that $\tint\displaystyle\dchi^{(\vect{m})}_{\indexvectgamma}\mathrm{d}\rule{1pt}{0pt}\omega^{(\vect{m})}_{\vect{\kappa}}\neq 0$. 
Then, we can conclude that  there exists $\vect{h}'\in\mathbb{N}^{\mathsf{d}}$ such that $\gamma_{\mathsf{i}}=h'_{\mathsf{i}}m^{\flat}_{\mathsf{i}}$ for all $\mathsf{i}$. Further, there exists $\vect{v}$ such that 
\begin{equation}\label{1509231215}
\tsum_{\mathsf{i}=1}^{\mathsf{d}} v_{\mathsf{i}}h'_{\mathsf{i}}/m_{\mathsf{i}}^{\sharp}=Q(\vectgamma,\vect{v})\in 2\mathbb{Z}.
\end{equation}
In particular, the value of the left hand side of \eqref{1509231215} is an element of $\mathbb{Z}$.  Since the  $m^{\sharp}_{\mathsf{i}}$, $\mathsf{i}\in \{1,\ldots,\mathsf{d}\}$, 
are pairwise relatively prime, there is a $\vect{h}\in\mathbb{N}_0^{\mathsf{d}}$ such that $h'_{\mathsf{i}}=h_{\mathsf{i}}m^{\sharp}_{\mathsf{i}}$, $\mathsf{i}\in \{1,\ldots,\mathsf{d}\}$, and further, $v_1h_1+\ldots+v_{\mathsf{d}}h_{\mathsf{d}}\in 2\mathbb{Z}$. We can conclude  \eqref{1509222116}.

Suppose \eqref{1509222116}. Then, $\vartheta_{\vect{\kappa}}(\vectgamma,\vect{v})=\tsum_{\mathsf{i}=1}^{\mathsf{d}}v_{\mathsf{i}}h_{\mathsf{i}}\kappa_{\mathsf{i}}\equiv -\vartheta_{\vect{\kappa}}(\vectgamma) \tmod 2$. Since  $Q(\vectgamma,\vect{v})\in 2\mathbb{Z}$ for all $\vect{v}$ and $\gamma_{\mathsf{i}}/m^{\flat}_{\mathsf{i}}\in \mathbb{N}_0$ for all $\mathsf{i}$, the   value of $\tint\displaystyle\dchi^{(\vect{m})}_{\indexvectgamma}\mathrm{d}\rule{1pt}{0pt}\omega^{(\vect{m})}_{\vect{\kappa}}$
equals $(-1)^{\vartheta_{\vect{\kappa}}(\indexvectgamma)}$. 
\end{proof}

In the following, we use the notation 
\begin{equation}\label{1608311800}
\mathfrak{e}(\vectgamma)=\#\{\,\mathsf{i}\in\{1,\ldots,\mathsf{d}\}\,|\,\gamma_{\mathsf{i}}>0\,\},
\end{equation}
\begin{equation}\label{1608311805}
\mathfrak{f}^{(\vect{m})}(\vectgamma) =\left\{ \begin{array}{cl} 
\#\{\,\mathsf{i}\,|\,\gamma_{\mathsf{i}}=m_{\mathsf{i}}/2\,\}-1,\; & \text{if}\;\, \exists\,\mathsf{i}\in\{1,\ldots,\mathsf{d}\}:\,\gamma_{\mathsf{i}}=m_{\mathsf{i}}/2, \\
0, \; & \text{otherwise}.
\end{array} \right. 
\end {equation}

\begin{theorem}\label{1509232032} \begin{enumerate}[a)] \item Let $\vectgamma,\vectgamma'\in\overline{\vect{\Gamma}}^{(\vect{m})}_{\vect{\kappa}}$.
We have $\tint\displaystyle\dchi^{(\vect{m})}_{\indexvectgamma}\dchi^{(\vect{m})}_{\indexvectgamma'}\mathrm{d}\rule{1pt}{0pt}\omega^{(\vect{m})}_{\vect{\kappa}}\neq 0$ 
if and only if the indices  $\vectgamma$ and $\vectgamma'$ belong to the same class  in the  decomposition \eqref{1608241504}.  In this case
\begin{equation}\label{1609080736}
\tint\displaystyle\dchi^{(\vect{m})}_{\indexvectgamma} \dchi^{(\vect{m})}_{\indexvectgamma'}\mathrm{d}\rule{1pt}{0pt}\omega^{(\vect{m})}_{\vect{\kappa}}=
(-1)^{\vect{\kappa}[\indexvectgamma,\indexvectgamma']}\|\displaystyle\dchi^{(\vect{m})}_{\indexvectgamma}\|_{\omega^{(\vect{m})}_{\vect{\kappa}}}^2 \quad  
\text{for}\quad \vectgamma'\in[\vectgamma],\ \vectgamma\in\overline{\vect{\Gamma}}^{(\vect{m})}_{\vect{\kappa}},
\end{equation}
where 

\vspace{-2em}

\begin{equation}\label{1509231405}
\|\displaystyle\dchi^{(\vect{m})}_{\indexvectgamma}\|_{\omega^{(\vect{m})}_{\vect{\kappa}}}^2   =  
 \left\{ \begin{array}{cl}  2^{-\mathfrak{e}(\indexvectgamma)+\mathfrak{f}^{(\vect{m})}(\indexvectgamma)},\; & \text{if}\quad \vectgamma \in\overline{\vect{\Gamma}}^{(\vect{m})}_{\vect{\kappa}}\setminus\mathfrak{S}^{(\vect{m})}(\vect{0}),\\
   1,\; & \text{if}\quad \vectgamma \in\mathfrak{S}^{(\vect{m})}(\vect{0}).
\end{array} \right. 
\end{equation}
\item Let $\vect{\Gamma}^{(\vect{m})}_{\vect{\kappa}}$  be a set of unique representatives corresponding to the class decomposition 
$\left[\overline{\vect{\Gamma}}^{(\vect{m})}_{\vect{\kappa}}\right]$, i.e. suppose  $\vect{\Gamma}^{(\vect{m})}_{\vect{\kappa}}\subseteq\overline{\vect{\Gamma}}^{(\vect{m})}_{\vect{\kappa}}$ 
satisfies \eqref{1509301005}.
Then, $\displaystyle\dchi^{(\vect{m})}_{\indexvectgamma}$, $\vectgamma\in \vect{\Gamma}^{(\vect{m})}_{\vect{\kappa}}$, form an  orthogonal basis of $(\mathcal{L}(\I^{(\vect{m})}_{\vect{\kappa}}),\langle\,\cdot,\cdot\,\rangle_{\omega^{(\vect{m})}_{\vect{\kappa}}})$.
\end{enumerate}
\end{theorem}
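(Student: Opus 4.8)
The plan is to establish a) first, splitting the stated equivalence into its two implications, and then to deduce b) from a).

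\emph{The norm and the implication ``$\Longleftarrow$'' of\/} a). Assume $\vectgamma'\in[\vectgamma]$. By Proposition~\ref{1609080801} we have $\dchi^{(\vect{m})}_{\indexvectgamma'}=(-1)^{\vect{\kappa}[\indexvectgamma,\indexvectgamma']}\dchi^{(\vect{m})}_{\indexvectgamma}$ on $\I^{(\vect{m})}_{\vect{\kappa}}$, hence $\tint\dchi^{(\vect{m})}_{\indexvectgamma}\dchi^{(\vect{m})}_{\indexvectgamma'}\,\mathrm{d}\omega^{(\vect{m})}_{\vect{\kappa}}=(-1)^{\vect{\kappa}[\indexvectgamma,\indexvectgamma']}\|\dchi^{(\vect{m})}_{\indexvectgamma}\|_{\omega^{(\vect{m})}_{\vect{\kappa}}}^2$, and it remains to compute the norm and to check that it is positive. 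For this I expand $(\dchi^{(\vect{m})}_{\indexvectgamma})^2=\tprod_{\mathsf{i}=1}^{\mathsf{d}}\cos^2(\gamma_{\mathsf{i}}i_{\mathsf{i}}\pi/m_{\mathsf{i}})$ by $\cos^2\theta=\tfrac12(1+\cos 2\theta)$ into a sum, over $\mathsf{S}\subseteq\{1,\ldots,\mathsf{d}\}$, of $2^{-\mathsf{d}}\dchi^{(\vect{m})}_{\indexvecteta}$, where $\vecteta$ has components $2\gamma_{\mathsf{i}}$ for $\mathsf{i}\in\mathsf{S}$ and $0$ otherwise, and apply Proposition~\ref{1509231358} term by term. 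Since $\vectgamma\in\overline{\vect{\Gamma}}^{(\vect{m})}_{\vect{\kappa}}$ gives $\gamma_{\mathsf{i}}\le m_{\mathsf{i}}$, a surviving term requires $\gamma_{\mathsf{i}}\in\{0,m_{\mathsf{i}}/2,m_{\mathsf{i}}\}$ for $\mathsf{i}\in\mathsf{S}$ and $\#\{\mathsf{i}\in\mathsf{S}:2\gamma_{\mathsf{i}}=m_{\mathsf{i}}\}$ even. If $\vectgamma\in\mathfrak{S}^{(\vect{m})}(\vect{0})$ then $\dchi^{(\vect{m})}_{\indexvectgamma}=\pm 1$ and the norm equals $1$. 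Otherwise no $\gamma_{\mathsf{i}}$ equals $m_{\mathsf{i}}$ (by the inequalities defining $\overline{\vect{\Gamma}}^{(\vect{m})}_{\vect{\kappa}}$ together with \eqref{1609170212}), and the condition $(\gamma_{\mathsf{i}},\gamma_{\mathsf{j}})\neq(m_{\mathsf{i}}/2,m_{\mathsf{j}}/2)$ for $\kappa_{\mathsf{i}}\not\equiv\kappa_{\mathsf{j}}$ forces all indices with $\gamma_{\mathsf{i}}=m_{\mathsf{i}}/2$ to have $\kappa_{\mathsf{i}}$ of one fixed parity, so every surviving term has value $(-1)^{\vartheta_{\vect{\kappa}}}=+1$; counting the admissible $\mathsf{S}$ then yields exactly the value $2^{-\mathfrak{e}(\indexvectgamma)+\mathfrak{f}^{(\vect{m})}(\indexvectgamma)}$ in \eqref{1509231405}. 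In all cases the norm is positive, which together with the displayed identity proves ``$\Longleftarrow$'' and formula \eqref{1609080736}.

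\emph{The implication ``$\Longrightarrow$'' of\/} a), \emph{the main point.} Now assume $\tint\dchi^{(\vect{m})}_{\indexvectgamma}\dchi^{(\vect{m})}_{\indexvectgamma'}\,\mathrm{d}\omega^{(\vect{m})}_{\vect{\kappa}}\neq 0$; we must show $\vectgamma'\in[\vectgamma]$. Applying $\cos a\cos b=\tfrac12\tsum_{\sigma\in\{-1,1\}}\cos(a+\sigma b)$ in each factor, and using that $\cos$ is even, we write $\dchi^{(\vect{m})}_{\indexvectgamma}\dchi^{(\vect{m})}_{\indexvectgamma'}=2^{-\mathsf{d}}\tsum_{\vect{\sigma}\in\{-1,1\}^{\mathsf{d}}}\dchi^{(\vect{m})}_{\indexvecteta}$ with $\eta_{\mathsf{i}}=|\gamma_{\mathsf{i}}+\sigma_{\mathsf{i}}\gamma'_{\mathsf{i}}|$ (depending on $\vect{\sigma}$), so by Proposition~\ref{1509231358} at least one $\vect{\sigma}$ must be \emph{good}: every $|\gamma_{\mathsf{i}}+\sigma_{\mathsf{i}}\gamma'_{\mathsf{i}}|$ is a multiple $h_{\mathsf{i}}m_{\mathsf{i}}$ of $m_{\mathsf{i}}$ and $\tsum_{\mathsf{i}}h_{\mathsf{i}}$ is even. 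Fix such a $\vect{\sigma}$ and set $\mathsf{D}=\{\mathsf{i}:\gamma_{\mathsf{i}}\neq\gamma'_{\mathsf{i}}\}$. Since $0\le\gamma_{\mathsf{i}},\gamma'_{\mathsf{i}}\le m_{\mathsf{i}}$, for $\mathsf{i}\in\mathsf{D}$ one necessarily has $h_{\mathsf{i}}=1$ and $\gamma_{\mathsf{i}}+\gamma'_{\mathsf{i}}=m_{\mathsf{i}}$, so exactly one of $\gamma_{\mathsf{i}},\gamma'_{\mathsf{i}}$ exceeds $m_{\mathsf{i}}/2$; the inequalities $\gamma_{\mathsf{i}}/m_{\mathsf{i}}+\gamma_{\mathsf{j}}/m_{\mathsf{j}}\le 1$ defining $\overline{\vect{\Gamma}}^{(\vect{m})}_{\vect{\kappa}}$ then imply $\#\mathsf{D}\le 2$. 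If $\#\mathsf{D}=0$ then $\vectgamma=\vectgamma'$. If $\#\mathsf{D}=1$, the single index of $\mathsf{D}$ is the only coordinate where $h$ can be nonzero (the remaining coordinates of the relevant vector lying strictly below $m_{\mathsf{j}}/2$ can only give $h_{\mathsf{j}}=0$), so $\tsum_{\mathsf{i}}h_{\mathsf{i}}=1$ is odd, contradicting goodness; this case does not occur. If $\mathsf{D}=\{\mathsf{i}_1,\mathsf{i}_2\}$, say with $\gamma_{\mathsf{i}_1}>m_{\mathsf{i}_1}/2$ and $\gamma'_{\mathsf{i}_2}>m_{\mathsf{i}_2}/2$, then $\vectgamma,\vectgamma'\notin\vect{\Gamma}^{(\vect{m})}_{\vect{\kappa},0}\cup\vect{\Gamma}^{(\vect{m})}_{\vect{\kappa},1}$, and by Proposition~\ref{201512041410}\,c) the vectors $\vecteta=\mathfrak{s}^{(\vect{m})}_{\mathsf{i}_1}(\vectgamma)$ and $\vecteta'=\mathfrak{s}^{(\vect{m})}_{\mathsf{i}_2}(\vectgamma')$ lie in $\vect{\Gamma}^{(\vect{m})}_{\vect{0},1}$ with $\mathsf{i}_1\in\mathsf{K}^{(\vect{m})}(\vecteta)$, $\mathsf{i}_2\in\mathsf{K}^{(\vect{m})}(\vecteta')$, and $\vectgamma=\mathfrak{s}^{(\vect{m})}_{\mathsf{i}_1}(\vecteta)$, $\vectgamma'=\mathfrak{s}^{(\vect{m})}_{\mathsf{i}_2}(\vecteta')$; using $\gamma_{\mathsf{i}}+\gamma'_{\mathsf{i}}=m_{\mathsf{i}}$ on $\mathsf{D}$ and $\gamma_{\mathsf{j}}=\gamma'_{\mathsf{j}}$ off $\mathsf{D}$, a coordinate comparison gives $\vecteta=\vecteta'$, so both $\vectgamma$ and $\vectgamma'$ lie in $\mathfrak{S}^{(\vect{m})}(\vecteta)$, which by \eqref{1608241504} is a class, necessarily $[\vectgamma]$. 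This coordinatewise bookkeeping — in particular the parity obstruction ruling out $\#\mathsf{D}=1$ and the verification that the auxiliary $\vecteta$ lies in $\vect{\Gamma}^{(\vect{m})}_{\vect{0},1}$ and simultaneously represents $\vectgamma$ and $\vectgamma'$ — is the technical heart of the argument.

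\emph{Part\/} b). The functions $\dchi^{(\vect{m})}_{\indexvectgamma}$ are real-valued, so $\langle\dchi^{(\vect{m})}_{\indexvectgamma},\dchi^{(\vect{m})}_{\indexvectgamma'}\rangle_{\omega^{(\vect{m})}_{\vect{\kappa}}}=\tint\dchi^{(\vect{m})}_{\indexvectgamma}\dchi^{(\vect{m})}_{\indexvectgamma'}\,\mathrm{d}\omega^{(\vect{m})}_{\vect{\kappa}}$. If $\vect{\Gamma}^{(\vect{m})}_{\vect{\kappa}}$ satisfies \eqref{1509301005}, then any two distinct $\vectgamma,\vectgamma'\in\vect{\Gamma}^{(\vect{m})}_{\vect{\kappa}}$ lie in distinct classes, i.e.\ $\vectgamma'\notin[\vectgamma]$, so by a) this inner product vanishes, while $\|\dchi^{(\vect{m})}_{\indexvectgamma}\|_{\omega^{(\vect{m})}_{\vect{\kappa}}}^2>0$. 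Hence the $\dchi^{(\vect{m})}_{\indexvectgamma}$, $\vectgamma\in\vect{\Gamma}^{(\vect{m})}_{\vect{\kappa}}$, are pairwise orthogonal and nonzero, and therefore linearly independent in $\mathcal{L}(\I^{(\vect{m})}_{\vect{\kappa}})$; since by \eqref{1509301005} their number equals $\#\I^{(\vect{m})}_{\vect{\kappa}}=\dim\mathcal{L}(\I^{(\vect{m})}_{\vect{\kappa}})$, they form an orthogonal basis, which is the assertion of b).
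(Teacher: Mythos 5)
Your proof is correct and follows essentially the same route as the paper's: the product-to-sum expansion of $\dchi^{(\vect{m})}_{\indexvectgamma}\dchi^{(\vect{m})}_{\indexvectgamma'}$, Proposition~\ref{1509231358} applied termwise, the coordinatewise bookkeeping showing that the set of indices where $\vectgamma$ and $\vectgamma'$ differ has zero or two elements (with the parity of $\tsum h_{\mathsf{i}}$ excluding one differing index), Proposition~\ref{201512041410}\,c) to identify the common class, and the same counting of admissible sign vectors for the norm \eqref{1509231405} and the same dimension count for part b). The one step you genuinely streamline is the identification $\vecteta=\vecteta'$ in the two-index case, which you obtain by a direct coordinate comparison from $\gamma_{\mathsf{i}}+\gamma'_{\mathsf{i}}=m_{\mathsf{i}}$ on $\mathsf{D}$ and $\gamma_{\mathsf{j}}=\gamma'_{\mathsf{j}}$ off $\mathsf{D}$, whereas the paper passes through the equalities in \eqref{1509241130A}, \eqref{1509241130B} and a $\gcd$/divisibility argument; both are valid, and your shortcut shows the divisibility step can be dispensed with.
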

\begin{proof} We fix $\vectgamma,\vectgamma'\in \overline{\vect{\Gamma}}^{(\vect{m})}_{\vect{\kappa}}$.
Using the identity \eqref{1509230525} with $\mathsf{r}=2$, we obtain 
\begin{equation}\label{1509231402}
\displaystyle\dchi^{(\vect{m})}_{\indexvectgamma}\displaystyle\dchi^{(\vect{m})}_{\indexvectgamma'}=\dfrac1{2^{\mathsf{d}}}\tsum_{\vect{v}\in\{-1,1\}^{\mathsf{d}}}\displaystyle\dchi^{(\vect{m})}_{(|\gamma_1+v_1\gamma'_1|,\ldots,|\gamma_{\mathsf{d}}+v_{\mathsf{d}}\gamma'_{\mathsf{d}}|)}.
\end{equation}
We assume that $[\vectgamma']\neq [\vectgamma]$ and  $\tint\displaystyle\dchi^{(\vect{m})}_{\indexvectgamma}\dchi^{(\vect{m})}_{\indexvectgamma'}\mathrm{d}\rule{1pt}{0pt}\omega^{(\vect{m})}_{\vect{\kappa}}\neq 0$.  Then, by \eqref{1509231402}, there exists  $\vect{v}\in\{-1,1\}^{\mathsf{d}}$ such that
$\displaystyle\dchi^{(\vect{m})}_{(|\gamma_1+v_1\gamma'_1|,\ldots,|\gamma_{\mathsf{d}}+v_{\mathsf{d}}\gamma'_{\mathsf{d}}|)}\neq0$. Thus, by  Proposition~\ref{1509231358}, 
\begin{equation}\label{15090890727}
\text{there exists $\vect{h}\in \mathbb{N}_0^{\mathsf{d}}$ with $\gamma_{\mathsf{i}}+v_{\mathsf{i}}\gamma'_{\mathsf{i}}=h_{\mathsf{i}}m_{\mathsf{i}}$, $\mathsf{i}=1,\ldots,\mathsf{d}$,  and
$\tsum_{\mathsf{i}=1}^{\mathsf{d}}h_{\mathsf{i}}\in 2\mathbb{N}_0$}.
\end{equation}
Let $\mathsf{P}=\{\,\mathsf{i}\,|\,h_{\mathsf{i}}>0\,\}$. Since  $\vectgamma'\neq \vectgamma$, the set $\mathsf{P}$ is not empty.  For all $\mathsf{i}\notin \mathsf{P}$, we have $\gamma'_{\mathsf{i}}=\gamma_{\mathsf{i}}$.
Since $[\vectgamma']\neq [\vectgamma]$, at least one of the tupels $\vectgamma$ and $\vectgamma'$ is not an element of $\mathfrak{S}^{(\vect{m})}(\vect{0})$. Since $\vectgamma,\vectgamma'\in \overline{\vect{\Gamma}}^{(\vect{m})}_{\vect{\kappa}}$ and by the choice of
 $\mathsf{P}$ this implies  $h_{\mathsf{i}}=1$ for  $\mathsf{i}\in \mathsf{P}$ and $v_{\mathsf{i}}=1$ for  $\mathsf{i}\in \mathsf{P}$.
Since  \eqref{15090890727}, we  have that $\#\mathsf{P}$ is even, in particular
$\#\mathsf{P}\geq 2$.  

We have
$\gamma_{\mathsf{i}}\geq m_{\mathsf{i}}/2$ or $\gamma'_{\mathsf{i}}\geq m_{\mathsf{i}}/2$ for all $\mathsf{i}\in \mathsf{P}$.
Further, there is a $\mathsf{k}\in \mathsf{P}$ such that 
$\gamma_{\mathsf{k}}>m_{\mathsf{k}}/2$ or $\gamma'_{\mathsf{k}}>m_{\mathsf{k}}/2$, since otherwise 
$\gamma_{\mathsf{i}}=\gamma'_{\mathsf{i}}=m_{\mathsf{i}}/2$ for all $\mathsf{i}\in \mathsf{P}$, and thus $\vectgamma'=\vectgamma$.
Without loss of generality, we assume that $\gamma_{\mathsf{k}}>m_{\mathsf{k}}/2$. Then,
we have $\gamma_{\mathsf{i}}<m_{\mathsf{i}}/2$ for all $\mathsf{i}\neq \mathsf{k}$, since $\vectgamma\in\vect{\Gamma}^{(\vect{m})}_{\vect{\kappa}}$. Further, there exists a $\mathsf{k}'\in \mathsf{P}$ such that $\gamma'_{\mathsf{k}'} > m_{\mathsf{k}'}/2$, for  otherwise  $\mathsf{P}=\{\mathsf{k}\}$ and $\#\mathsf{P}=1$. Again,
we have  $\gamma'_{\mathsf{i}}<m_{\mathsf{k}'}/2$ for all $\mathsf{i}\neq \mathsf{k}'$,  hence $\#\mathsf{P}\leq 2$. Summarizing, we have $\mathsf{P}=\{\mathsf{k}, \mathsf{k}'\}$ with $\mathsf{k}'\neq\mathsf{k}$ and
\begin{equation}\label{1608301127}
\gamma_{\mathsf{k}}+\gamma'_{\mathsf{k}}=m_{\mathsf{k}},\qquad \gamma_{\mathsf{k}'}+\gamma'_{\mathsf{k}'}=m_{\mathsf{k}'},\qquad \gamma'_{\mathsf{i}}=\gamma_{\mathsf{i}}\quad \text{for $\mathsf{i}\notin\{\mathsf{k},\mathsf{k}'\}$}.
\end{equation}
Since $\vectgamma,\vectgamma'\in \overline{\vect{\Gamma}}^{(\vect{m})}_{\vect{\kappa}}\setminus\left(\vect{\Gamma}^{(\vect{m})}_{\vect{\kappa},0} \cup \vect{\Gamma}^{(\vect{m})}_{\vect{\kappa},1}\right)$, by Proposition \ref{201512041410} there  exists  $\vecteta,\vecteta'\in \vect{\Gamma}^{(\vect{m})}_{\vect{0},1}$ with\[\mathsf{k}\in\mathsf{K}[\vecteta],\quad \mathsf{k}'\in\mathsf{K}[\vecteta'],\quad
 \vectgamma=\mathfrak{s}^{(\vect{m})}_{\mathsf{k}}(\vecteta),\quad\vectgamma'=\mathfrak{s}^{(\vect{m})}_{\mathsf{k}'}(\vecteta').\]
From the definitions in \eqref{1608251942} and \eqref{1608251943} we get
\begin{align}
\label{1509241130A}
\gamma_{\mathsf{k}'}/m_{\mathsf{k}'}=\eta_{\mathsf{k}'}/m_{\mathsf{k}'}& \leq \ \eta_{\mathsf{k}}/m_{\mathsf{k}}=1-\gamma_{\mathsf{k}}/m_{\mathsf{k}},\\
\label{1509241130B}\gamma'_{\mathsf{k}}/m_{\mathsf{k}}= \eta'_{\mathsf{k}}/m_{\mathsf{k}}\ &\leq\eta'_{\mathsf{k}'}/m_{\mathsf{k}'}=1-\gamma'_{\mathsf{k}'}/m_{\mathsf{k}'}.
\end{align}
Since 
\[(\gamma_{\mathsf{k}}+\gamma'_{\mathsf{k}})/m_{\mathsf{k}}+(\gamma_{\mathsf{k}'}+\gamma'_{\mathsf{k}'})/m_{\mathsf{k}'}=h_{\mathsf{k}}+h_{\mathsf{k}'}=2,\]
we have equality in \eqref{1509241130A} and  in \eqref{1509241130B}. Combining this with \eqref{1608301127} implies 
\begin{equation}\label{1508311343}
\begin{split}
\gamma'_{\mathsf{k}}/m_{\mathsf{k}}&=1-\gamma_{\mathsf{k}}/m_{\mathsf{k}},\quad
\gamma'_{\mathsf{k}'}/m_{\mathsf{k}'}=1-\gamma_{\mathsf{k}'}/m_{\mathsf{k}'},\\
\gamma_{\mathsf{k}'}/m_{\mathsf{k}'}&=1-\gamma_{\mathsf{k}}/m_{\mathsf{k}},\quad
\ \gamma'_{\mathsf{k}}/m_{\mathsf{k}}=1-\gamma'_{\mathsf{k}'}/m_{\mathsf{k}'}.
\end{split}
\end{equation}
Let $g=\gcd\{m_{\mathsf{k}},m_{\mathsf{k}'}\}$ and $\mu_{\mathsf{k}}=m_{\mathsf{k}}/g$, $\mu_{\mathsf{k}'}=m_{\mathsf{k}'}/g$. Since  \eqref{1508311343} implies  that \[\gamma'_{\mathsf{k}'}\mu_{\mathsf{k}}=\gamma_{\mathsf{k}}\mu_{\mathsf{k}'},\quad \gamma'_{\mathsf{k}}\mu_{\mathsf{k}'}=\gamma_{\mathsf{k}'}\mu_{\mathsf{k}},\]
and since $\gcd\{\mu_{\mathsf{k}},\mu_{\mathsf{k}'}\}=1$, there exists  $\alpha,\beta\in\mathbb{N}_0$ satisfying
\[\gamma_{\mathsf{k}}=\alpha\mu_{\mathsf{k}},\quad \gamma_{\mathsf{k}'}=\beta\mu_{\mathsf{k}'},\quad \gamma'_{\mathsf{k}}=\beta\mu_{\mathsf{k}},\quad \gamma'_{\mathsf{k}'}=\alpha\mu_{\mathsf{k}'}. \]
Further, \eqref{1608301127} yields $\alpha+\beta=g$, and thus, \eqref{1509241130A} and \eqref{1509241130B} imply
\[\eta'_{\mathsf{k}}=\beta\mu_{\mathsf{k}}=(g-\alpha)\mu_{\mathsf{k}}=\eta_{\mathsf{k}},\quad \eta'_{\mathsf{k}'}=(g-\alpha)\mu_{\mathsf{k}'}=\beta\mu_{\mathsf{k}'}=\eta_{\mathsf{k}'}.\]
We have  $\vecteta'=\vecteta$ and, thus, $\vectgamma$ and $\vectgamma'$ belong to the same class of  \eqref{1608241504}, a contradiction.

\medskip

Now, we consider the case $\vectgamma'=\vectgamma$.  If $\vectgamma\in\mathfrak{S}^{(\vect{m})}(\vect{0})$, then $\|\displaystyle\dchi^{(\vect{m})}_{\indexvectgamma}\|_{\omega^{(\vect{m})}_{\vect{\kappa}}}^2=1$, i.e. we have \eqref{1509231405} for $\vectgamma \in \mathfrak{S}^{(\vect{m})}(\vect{0})$.
Let $\vectgamma'=\vectgamma\notin \mathfrak{S}^{(\vect{m})}(\vect{0})$
and consider the set
\begin{equation}\label{1509231403}
\left\{\,\vect{v}\in\{-1,1\}^{\mathsf{d}}\,|\,\vect{v}\ \text{satisfies \eqref{15090890727}}\right\}.
\end{equation}
Using the notation $\mathsf{N}=\{\,\mathsf{i}\,|\,\gamma_{\mathsf{i}}>0\}$, $\mathsf{M}=\{\,\mathsf{i}\,|\,\gamma_{\mathsf{i}} = m_{\mathsf{i}}/2\}$,
 $\mathsf{M}(\vect{v})=\{\,\mathsf{i}\in \mathsf{M}\,|\,v_{\mathsf{i}}=1\}$, the set \eqref{1509231403} can be reformulated as
\begin{equation}\label{1509231404}
\{\,\vect{v}\in\{-1,1\}^{\mathsf{d}}\,|\,\text{$v_{\mathsf{i}}=-1$ for all $\mathsf{i}\in \mathsf{N}\setminus \mathsf{M}$ and $\#\mathsf{M}(\vect{v})$ is even}\,\}.
\end{equation}
The number $A$ of elements in \eqref{1509231404} is given by
\[A=2^{\mathsf{d}-\#\mathsf{N}}\tsum_{\substack{m=0\\m\equiv 0\tmod 2}}^{\#\mathsf{M}}\displaystyle\binom{\#\mathsf{M}}{m}
= \left\{\begin{array}{rl} 2^{\mathsf{d}-\#\mathsf{N}}& \text{if $\mathsf{M}=\emptyset$,} \\
                           2^{\#\mathsf{M}-1} 2^{\mathsf{d}-\#\mathsf{N}}& \text{otherwise.}        
         \end{array} \right.\]
Let $\vect{v}$ be an element of \eqref{1509231404}. Using  the notation given in Proposition~\ref{1509231358}, we have
\begin{equation}\label{1609082217}
\vartheta_{\vect{\kappa}}(|\gamma_1+v_1\gamma'_1|,\ldots,|\gamma_{\mathsf{d}}+v_{\mathsf{d}}\gamma'_{\mathsf{d}}|)=\tsum_{\mathsf{i}\in \mathsf{M}(\vect{v})}\kappa_{\mathsf{i}}.
\end{equation}
In \eqref{1609082217} the sum over an empty set is as usual considered to be $0$. 
Suppose $\mathsf{M}(\vect{v})\neq \emptyset$.
 Then, since  $\mathsf{M}\neq \emptyset$, we have $\vectgamma\in \vect{\Gamma}^{(\vect{m})}_{\vect{\kappa},0} \cup \vect{\Gamma}^{(\vect{m})}_{\vect{\kappa},1}$.
 By the definition in \eqref{1608271930}, we have $\kappa_{\mathsf{i}}\equiv\kappa_{\mathsf{j}}\tmod 2$ for all $\mathsf{i},\mathsf{j}\in \mathsf{M}$.
Since  $\#\mathsf{M}(\vect{v})$ is even, the integer \eqref{1609082217} is even. Therefore, in any case \eqref{1609082217} is an even integer, and by Proposition~\ref{1509231358}, we get $ \int \displaystyle\dchi^{(\vect{m})}_{\indexvectgamma(\vect{v})} \mathrm{d}\rule{1pt}{0pt}\omega^{(\vect{m})}_{\vect{\kappa}} =1$. 
Using \eqref{1509231402}, we get $\|\displaystyle\dchi^{(\vect{m})}_{\indexvectgamma}\|_{\omega^{(\vect{m})}_{\vect{\kappa}}}^2=A/2^{\mathsf{d}}$. Therefore, we have \eqref{1509231405} for $\vectgamma\in \overline{\vect{\Gamma}}^{(\vect{m})}_{\vect{\kappa}}\setminus\mathfrak{S}^{(\vect{m})}(\vect{0})$. Proposition \ref{1609080801} and  \eqref{1509231405} immediately imply  \eqref{1609080736}.

\medskip

The just shown statement a)  implies that
$\displaystyle\dchi^{(\vect{m})}_{\indexvectgamma}\neq 0$ for all $\vectgamma\in \vect{\Gamma}^{(\vect{m})}_{\vect{\kappa}}$. 
Since the functions $\displaystyle\dchi^{(\vect{m})}_{\indexvectgamma}$, $\indexvectgamma \in \Gamma^{(\vect{m})}_{\vect{\kappa}}$, are pairwise orthogonal, 
they are linearly independent. Further, by \eqref{1509301005} we have
 $\#\vect{\Gamma}^{(\vect{m})}_{\vect{\kappa}}= \#\I^{(\vect{m})}_{\vect{\kappa}}$. We conclude statement b). 
\end{proof}


\section{Multivariate polynomial interpolation}\label{1609011843}

In this central part of the article, we connect the characterizations of the node sets $\LC^{(\vect{m})}_{\vect{\kappa}}$, and 
the results for the functions $\displaystyle\dchi^{(\vect{m})}_{\indexvectgamma}$ on $\I^{(\vect{m})}_{\vect{\kappa}}$ of the previous section to 
prove a quadrature rule and the uniqueness of polynomial interpolation on $\LC^{(\vect{m})}_{\vect{\kappa}}$.

\medskip

For $\gamma\in\mathbb{N}_0$, the univariate Chebyshev polynomials  of the first kind are defined as 
\[T_{\gamma}:\,[-1,1]\to[-1,1],\qquad T_{\gamma}(x) = \cos (\gamma \arccos x).\]
Then, for  $\vectgamma\in\mathbb{N}_0^{\mathsf{d}}$, we consider
the {\it $\mathsf{d}$-variate Chebyshev polynomials}
 \[T_{\indexvectgamma}:\,[-1,1]^{\mathsf{d}}\to[-1,1],\qquad T_{\indexvectgamma}(\vect{x})=  T_{\gamma_1}(x_1) \cdot \ldots \cdot T_{\gamma_{\mathsf{d}}}(x_{\mathsf{d}}).\]

We denote by $\Pi^{\mathsf{d}}$ the complex vector space of all $\mathsf{d}$-variate polynomial functions from $[-1,1]^{\mathsf{d}}$ to $\mathbb{C}$. 
The Chebyshev polynomials $T_{\indexvectgamma}(\vect{x})$, $\vectgamma\in\mathbb{N}_0^{\mathsf{d}}$, form an orthogonal basis of the space $\Pi^{\mathsf{d}}$
with respect to the inner product given by 
 \begin{equation} \label{1509060507}
  \langle P,Q \rangle = \frac{1}{\pi^{\mathsf{d}}} \int_{[-1,1]^{\mathsf{d}}} P(\vect{x}) \overline{Q(\vect{x})} w_{\mathsf{d}}(\vect{x})\,\mathrm{d}\vect{x}, \quad w_{\mathsf{d}}(\vect{x})  = \tprod_{\mathsf{i} = 1}^{\mathsf{d}}  \displaystyle\frac{1}{\sqrt{1-x_{\mathsf{i}}^2}}.
 \end{equation}
In  particular, we have 
\begin{equation}
\label{1609081436} 
\langle T_{\indexvectgamma}, T_{\indexvectgamma'}\rangle=0\qquad \text{if}\quad \vectgamma'\neq \vectgamma.
\end{equation}
For simplicity of notation, the norm corresponding to  \eqref{1509060507} is denoted by $\|\cdot\|$, i.e.
 \[\|P\|=\sqrt{ \langle P,P \rangle}.\]
Using the notation \eqref{1608311800}, we have 
for $\vectgamma\in\mathbb{N}_0^{\mathsf{d}}$ the identity
\begin{equation} \label{1509082017}
\|T_{\indexvectgamma}\|^2  = 2^{-\mathfrak{e}(\indexvectgamma)}.
\end{equation}

\medskip

For $\vectgamma\in\mathbb{N}_0^{\mathsf{d}}$ and for $\vect{i}\in \I^{(\vect{m})}_{\vect{\kappa}}$, we have the fundamental relation
\begin{equation}\label{1509082018}
T_{\indexvectgamma}(\vect{z}^{(\vect{m})}_{\vect{i}}) = \displaystyle\dchi^{(\vect{m})}_{\indexvectgamma}(\vect{i}).
\end{equation}
This relation gives a direct link between the Chebyshev polynomials $T_{\indexvectgamma}$, the node points $\vect{z}^{(\vect{m})}_{\vect{i}} \in \LC^{(\vect{m})}_{\vect{\kappa}}$, 
and the functions $\displaystyle\dchi^{(\vect{m})}_{\indexvectgamma}$ on $\I^{(\vect{m})}_{\vect{\kappa}}$ defined in \eqref{1509241334}.
As a first application, we can  formulate a quadrature rule formula for $\mathsf{d}$-variate polynomials.

\begin{theorem} \label{1509082004} Let $P$ be a  $\mathsf{d}$-variate polynomial function from $[-1,1]^{\mathsf{d}}$ to $\mathbb{C}$. 
 If
\[\text{$\langle P, T_{\indexvectgamma} \rangle = 0$ for all $\vectgamma \in\mathbb{N}_0^{\mathsf{d}} \setminus \{\vect{0}\}$ satisfying condition \eqref{1509222116},}\]
then 

\vspace*{-2.3em}

\[\frac{1}{\pi^{\mathsf{d}}} \int_{[-1,1]^{\mathsf{d}}} P(\vect{x}) w(\vect{x}) \,\mathrm{d}\vect{x} = \tsum_{\vect{i} \in \I^{(\vect{m})}_{\vect{\kappa}}} \mathfrak{w}^{(\vect{m})}_{\vect{\kappa},\vect{i}} P(\vect{z}^{(\vect{m})}_{\vect{i}}). \]
\end{theorem}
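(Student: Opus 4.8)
The plan is to expand $P$ in the orthogonal Chebyshev basis $T_{\indexvectgamma}$ and to integrate term by term, exploiting the discrete orthogonality of Proposition~\ref{1509231358} on the index side. First I would write $P = \sum_{\indexvectgamma} c_{\indexvectgamma} T_{\indexvectgamma}$, a finite sum since $P$ is a polynomial, with $c_{\indexvectgamma} = \langle P, T_{\indexvectgamma}\rangle / \|T_{\indexvectgamma}\|^2$. The left-hand side of the claimed identity is, by definition \eqref{1509060507} and \eqref{1609081436}, simply $\langle P, T_{\vect{0}}\rangle = c_{\vect{0}}$ (using $T_{\vect{0}} \equiv 1$ and $\|T_{\vect{0}}\|=1$). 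So the task reduces to showing that the right-hand side also equals $c_{\vect{0}}$.

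For the right-hand side, I would substitute the expansion of $P$ and use the fundamental relation \eqref{1509082018}, $T_{\indexvectgamma}(\vect{z}^{(\vect{m})}_{\vect{i}}) = \dchi^{(\vect{m})}_{\indexvectgamma}(\vect{i})$, to get
\[
\tsum_{\vect{i} \in \I^{(\vect{m})}_{\vect{\kappa}}} \mathfrak{w}^{(\vect{m})}_{\vect{\kappa},\vect{i}}\, P(\vect{z}^{(\vect{m})}_{\vect{i}})
= \tsum_{\indexvectgamma} c_{\indexvectgamma} \tint \dchi^{(\vect{m})}_{\indexvectgamma}\,\mathrm{d}\rule{1pt}{0pt}\omega^{(\vect{m})}_{\vect{\kappa}}.
\]
Now Proposition~\ref{1509231358} tells us exactly which terms survive: $\tint \dchi^{(\vect{m})}_{\indexvectgamma}\,\mathrm{d}\rule{1pt}{0pt}\omega^{(\vect{m})}_{\vect{\kappa}} \neq 0$ only if $\indexvectgamma$ satisfies condition \eqref{1509222116}, and then the integral equals $(-1)^{\vartheta_{\vect{\kappa}}(\indexvectgamma)}$. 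The hypothesis of the theorem says $c_{\indexvectgamma} = 0$ for every $\indexvectgamma \in \mathbb{N}_0^{\mathsf{d}}\setminus\{\vect{0}\}$ satisfying \eqref{1509222116}. Hence the only term contributing to the sum is $\indexvectgamma = \vect{0}$, which trivially satisfies \eqref{1509222116} with $\vect{h} = \vect{0}$, giving $\tint \dchi^{(\vect{m})}_{\vect{0}}\,\mathrm{d}\rule{1pt}{0pt}\omega^{(\vect{m})}_{\vect{\kappa}} = (-1)^{0} = 1$. Therefore the right-hand side equals $c_{\vect{0}}$ as well, and the two sides agree.

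I do not anticipate a serious obstacle: the argument is a clean "expand, swap sum and integral, kill terms by hypothesis" computation. The one point deserving a line of care is confirming that $\tint \dchi^{(\vect{m})}_{\vect{0}}\,\mathrm{d}\rule{1pt}{0pt}\omega^{(\vect{m})}_{\vect{\kappa}} = 1$, i.e. that the weights $\mathfrak{w}^{(\vect{m})}_{\vect{\kappa},\vect{i}}$ sum to $1$ over $\I^{(\vect{m})}_{\vect{\kappa}}$; this is exactly the $\vectgamma = \vect{0}$ instance of Proposition~\ref{1509231358} (with $\vartheta_{\vect{\kappa}}(\vect{0}) = 0$), so it is already available. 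A secondary bookkeeping point is that the expansion $P = \sum c_{\indexvectgamma} T_{\indexvectgamma}$ is genuinely finite and that interchanging the finite sum with the (finite) integral/sum is unproblematic, which it is. This completes the plan.
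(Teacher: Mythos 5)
Your proposal is correct and is precisely the argument the paper intends: the paper omits the proof, noting only that it ``follows the lines of the proof of \cite[Theorem 2.10]{DenckerErb2015a}'', and that argument is exactly your expand-in-$T_{\indexvectgamma}$, apply \eqref{1509082018}, and invoke Proposition~\ref{1509231358} computation. Both directions of Proposition~\ref{1509231358} are used correctly (vanishing of the discrete integral unless \eqref{1509222116} holds, and its value $(-1)^{\vartheta_{\vect{\kappa}}(\indexvectgamma)}=1$ for $\vectgamma=\vect{0}$), so nothing is missing.
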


\medskip

\noindent The simple proof of this Theorem follows the lines of the proof of \cite[Theorem 2.10]{DenckerErb2015a}.

\medskip

We turn to multivariate interpolation problems based on the node sets $\LC^{(\vect{m})}_{\vect{\kappa}}$. 
For given $h(\vect{i}) \in \mathbb{R}$, $\vect{i} \in \I^{(\vect{m})}_{\vect{\kappa}}$, we want to find 
an polynomial $P^{(\vect{m})}_{\vect{\kappa},h}$ that satisfies
\begin{equation}\label{1509082000}
 P^{(\vect{m})}_{\vect{\kappa},h} (\vect{z}^{(\vect{m})}_{\vect{i}}) = h({\vect{i}}) \quad \text{for all}\quad \vect{i} \in \I^{(\vect{m})}_{\vect{\kappa}}.
\end{equation}
In order to make this problem uniquely solvable, we have to specify an appropriate polynomial space that has vector space 
dimension $\#\I^{(\vect{m})}_{\vect{\kappa}}$. 

Relation \eqref{1509082018} motivates the introduction of polynomial spaces 
that are spanned by basis polynomials $T_{\indexvectgamma}$. We have $\#\overline{\vect{\Gamma}}^{(\vect{m})}_{\vect{\kappa}}>\#\I^{(\vect{m})}_{\vect{\kappa}}$ if $\mathsf{d}\geq 2$. 
However, by Proposition \ref{201512041410} d) the number of classes in the decomposition \eqref{1608241504} is precisely $\#\I^{(\vect{m})}_{\vect{\kappa}}$. 
Thus, a first idea for a spectral index set is to use a set $\vect{\Gamma}^{(\vect{m})}_{\vect{\kappa}}$ that contains precisely one element from each class, i.e. 
we suppose $\vect{\Gamma}^{(\vect{m})}_{\vect{\kappa}}\subseteq\overline{\vect{\Gamma}}^{(\vect{m})}_{\vect{\kappa}}$ and \eqref{1509301005}. 
By Proposition~\ref{201512151534}, this
approach is implicitly also used for the special Lissajous-Chebyshev nodes considered in \cite{DenckerErb2015a}, and in 
particular for the Padua points. We define
\begin{equation} \label{1608271750}
\Pi^{(\vect{m})}_{\vect{\kappa}} = \vspan \left\{\, T_{\indexvectgamma}\,\left|\, \vectgamma \in \vect{\Gamma}^{(\vect{m})}_{\vect{\kappa}} \right. \right\}.
\end{equation}
Clearly, the polynomials $T_{\indexvectgamma}$, $\vectgamma \in \vect{\Gamma}^{(\vect{m})}_{\vect{\kappa}}$, 
form an orthogonal basis of the vector space $\Pi^{(\vect{m})}_{\vect{\kappa}}$ with respect to the inner product \eqref{1509060507} and we have
$\dim\Pi^{(\vect{m})}_{\vect{\kappa}}= \#\vect{\Gamma}^{(\vect{m})}_{\vect{\kappa}}=\# \I^{(\vect{m})}_{\vect{\kappa}}.$
Using the definition \eqref{1608311805} for $\mathfrak{f}^{(\vect{m})}(\vectgamma)$ and \eqref{1509301421} for $\vectgamma^{\ast}$, we define on $[-1,1]^{\mathsf{d}}\times [-1,1]^{\mathsf{d}}$ 
\[L^{(\vect{m})}_{\vect{\kappa}}(\vect{z},\vect{x}) = \tsum_{\indexvectgamma \in \vect{\Gamma}^{(\vect{m})}_{\vect{\kappa}}} 
\dfrac{2^{-\mathfrak{f}^{(\vect{m})}(\indexvectgamma)}}{\|T_{\indexvectgamma}\|^2}  
T_{\indexvectgamma}(\vect{z})T_{\indexvectgamma}(\vect{x})-T_{\indexvectgamma^{\ast}}(\vect{z})T_{\indexvectgamma^{\ast}}(\vect{x}). \]
Further, using the  weights $\mathfrak{w}^{(\vect{m})}_{\vect{\kappa},\vect{i}}$ given in \eqref{1608311812}, we introduce for $\vect{i} \in \I^{(\vect{m})}_{\vect{\kappa}}$ the functions 
\begin{equation} \label{1509082001}
 L^{(\vect{m})}_{\vect{\kappa},\vect{i}} = \mathfrak{w}^{(\vect{m})}_{\vect{\kappa},\vect{i}}L^{(\vect{m})}_{\vect{\kappa}}(\vect{z}^{(\vect{m})}_{\vect{i}},\cdot\,).
\end{equation}

\begin{theorem}\label{1509082002}  Let $\vect{\Gamma}^{(\vect{m})}_{\vect{\kappa}}$ 
be a set of unique representatives corresponding to the class decomposition $\left[\overline{\vect{\Gamma}}^{(\vect{m})}_{\vect{\kappa}}\right]$ 
given in \eqref{1608241504}.
For $h\in\mathcal{L}(\I^{(\vect{m})}_{\vect{\kappa}})$, the interpolation problem \eqref{1509082000} has a unique solution 
in the polynomial space $\Pi^{(\vect{m})}_{\vect{\kappa}}$ given by 
\begin{equation}\label{1509082003} 
P^{(\vect{m})}_{\vect{\kappa},h} = \tsum_{\vect{i} \in \I^{(\vect{m})}_{\vect{\kappa}}} h({\vect{i}}) L^{(\vect{m})}_{\vect{\kappa},\vect{i}}.
\end{equation}
Furthermore, we have $\left\{\,P^{(\vect{m})}_{\vect{\kappa},h}\,\left|\,h\in \mathcal{L}(\I^{(\vect{m})}_{\vect{\kappa}})\right.\,\right\} = \Pi^{(\vect{m})}_{\vect{\kappa}}$, and the  polynomial functions  $L^{(\vect{m})}_{\vect{\kappa},\vect{i}}$, $\vect{i} \in \I^{(\vect{m})}_{\vect{\kappa}}$, in \eqref{1509082001} form a basis of the vector space $\Pi^{(\vect{m})}_{\vect{\kappa}}$.
\end{theorem}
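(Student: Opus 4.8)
The plan is to reduce the entire statement to the discrete orthogonality of Theorem~\ref{1509232032} by means of the fundamental identity~\eqref{1509082018}, namely $T_{\indexvectgamma}(\vect{z}^{(\vect{m})}_{\vect{i}})=\dchi^{(\vect{m})}_{\indexvectgamma}(\vect{i})$.

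\emph{Uniqueness.} We already know $\dim\Pi^{(\vect{m})}_{\vect{\kappa}}=\#\vect{\Gamma}^{(\vect{m})}_{\vect{\kappa}}=\#\I^{(\vect{m})}_{\vect{\kappa}}=\dim\mathcal{L}(\I^{(\vect{m})}_{\vect{\kappa}})$, so the sampling map $\Pi^{(\vect{m})}_{\vect{\kappa}}\to\mathcal{L}(\I^{(\vect{m})}_{\vect{\kappa}})$, $P\mapsto\bigl(P(\vect{z}^{(\vect{m})}_{\vect{i}})\bigr)_{\vect{i}}$, is linear between spaces of the same finite dimension. If $P=\tsum_{\indexvectgamma\in\vect{\Gamma}^{(\vect{m})}_{\vect{\kappa}}}c_{\indexvectgamma}T_{\indexvectgamma}\in\Pi^{(\vect{m})}_{\vect{\kappa}}$ vanishes at every node, then by~\eqref{1509082018} the function $\tsum_{\indexvectgamma}c_{\indexvectgamma}\dchi^{(\vect{m})}_{\indexvectgamma}$ is zero in $\mathcal{L}(\I^{(\vect{m})}_{\vect{\kappa}})$, and since the $\dchi^{(\vect{m})}_{\indexvectgamma}$, $\vectgamma\in\vect{\Gamma}^{(\vect{m})}_{\vect{\kappa}}$, are linearly independent by Theorem~\ref{1509232032}\,b), all $c_{\indexvectgamma}$ vanish, i.e. $P=0$. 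Hence the sampling map is injective, therefore bijective, so \eqref{1509082000} has a unique solution in $\Pi^{(\vect{m})}_{\vect{\kappa}}$ for every $h$.

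\emph{Identification of the interpolant.} That $P^{(\vect{m})}_{\vect{\kappa},h}\in\Pi^{(\vect{m})}_{\vect{\kappa}}$ is immediate: for fixed $\vect{z}$ the kernel $L^{(\vect{m})}_{\vect{\kappa}}(\vect{z},\cdot\,)$ is a linear combination of the $T_{\indexvectgamma}$, $\vectgamma\in\vect{\Gamma}^{(\vect{m})}_{\vect{\kappa}}$, using $\vectgamma^{\ast}\in\vect{\Gamma}^{(\vect{m})}_{\vect{\kappa}}$ by~\eqref{1509301421}. It then suffices to verify the Lagrange property $L^{(\vect{m})}_{\vect{\kappa},\vect{i}}(\vect{z}^{(\vect{m})}_{\vect{j}})=\delta_{\vect{i},\vect{j}}$, which gives $P^{(\vect{m})}_{\vect{\kappa},h}(\vect{z}^{(\vect{m})}_{\vect{j}})=h(\vect{j})$ and, by the uniqueness just proved, identifies \eqref{1509082003} as the interpolant. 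Substituting~\eqref{1509082018} into~\eqref{1509082001} yields
\[ L^{(\vect{m})}_{\vect{\kappa},\vect{i}}(\vect{z}^{(\vect{m})}_{\vect{j}})=\mathfrak{w}^{(\vect{m})}_{\vect{\kappa},\vect{i}}\tsum_{\indexvectgamma\in\vect{\Gamma}^{(\vect{m})}_{\vect{\kappa}}}\Bigl(\tfrac{2^{-\mathfrak{f}^{(\vect{m})}(\indexvectgamma)}}{\|T_{\indexvectgamma}\|^2}-\delta_{\indexvectgamma,\indexvectgamma^{\ast}}\Bigr)\dchi^{(\vect{m})}_{\indexvectgamma}(\vect{i})\,\dchi^{(\vect{m})}_{\indexvectgamma}(\vect{j}). \]
The bracketed coefficient equals $1/\|\dchi^{(\vect{m})}_{\indexvectgamma}\|^2_{\omega^{(\vect{m})}_{\vect{\kappa}}}$: for $\vectgamma\notin\mathfrak{S}^{(\vect{m})}(\vect{0})$ it is $2^{\mathfrak{e}(\indexvectgamma)-\mathfrak{f}^{(\vect{m})}(\indexvectgamma)}$ by~\eqref{1509082017}, which matches the first line of~\eqref{1509231405}; for $\vectgamma=\vectgamma^{\ast}$ one has $\mathfrak{e}(\indexvectgamma^{\ast})=1$ and $\mathfrak{f}^{(\vect{m})}(\indexvectgamma^{\ast})=0$ (since by~\eqref{1609170212} $\vectgamma^{\ast}$ has a single positive entry $m_{\mathsf{j}}$, and $m_{\mathsf{j}}\neq m_{\mathsf{j}}/2$, $0\neq m_{\mathsf{i}}/2$), so the first term is $2$ and the subtracted $1$ leaves $1=1/\|\dchi^{(\vect{m})}_{\indexvectgamma^{\ast}}\|^2_{\omega^{(\vect{m})}_{\vect{\kappa}}}$ by the second line of~\eqref{1509231405}. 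Therefore $L^{(\vect{m})}_{\vect{\kappa},\vect{i}}(\vect{z}^{(\vect{m})}_{\vect{j}})=\mathfrak{w}^{(\vect{m})}_{\vect{\kappa},\vect{i}}\tsum_{\indexvectgamma\in\vect{\Gamma}^{(\vect{m})}_{\vect{\kappa}}}\dchi^{(\vect{m})}_{\indexvectgamma}(\vect{i})\dchi^{(\vect{m})}_{\indexvectgamma}(\vect{j})/\|\dchi^{(\vect{m})}_{\indexvectgamma}\|^2_{\omega^{(\vect{m})}_{\vect{\kappa}}}$, which, because $\langle\delta_{\vect{i}},\dchi^{(\vect{m})}_{\indexvectgamma}\rangle_{\omega^{(\vect{m})}_{\vect{\kappa}}}=\mathfrak{w}^{(\vect{m})}_{\vect{\kappa},\vect{i}}\,\dchi^{(\vect{m})}_{\indexvectgamma}(\vect{i})$, is precisely the value at $\vect{j}$ of the expansion of $\delta_{\vect{i}}$ in the orthogonal basis of Theorem~\ref{1509232032}\,b); hence it equals $\delta_{\vect{i}}(\vect{j})=\delta_{\vect{i},\vect{j}}$.

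\emph{Range, basis, and the main obstacle.} The map $\mathcal{L}(\I^{(\vect{m})}_{\vect{\kappa}})\to\Pi^{(\vect{m})}_{\vect{\kappa}}$, $h\mapsto P^{(\vect{m})}_{\vect{\kappa},h}$, is linear and injective (if $P^{(\vect{m})}_{\vect{\kappa},h}=0$, then $h(\vect{j})=P^{(\vect{m})}_{\vect{\kappa},h}(\vect{z}^{(\vect{m})}_{\vect{j}})=0$ for all $\vect{j}$) between spaces of equal dimension $\#\I^{(\vect{m})}_{\vect{\kappa}}$, hence bijective, giving $\{\,P^{(\vect{m})}_{\vect{\kappa},h}\mid h\in\mathcal{L}(\I^{(\vect{m})}_{\vect{\kappa}})\,\}=\Pi^{(\vect{m})}_{\vect{\kappa}}$; applying this bijection to the basis $\{\delta_{\vect{i}}\}_{\vect{i}\in\I^{(\vect{m})}_{\vect{\kappa}}}$ and observing $P^{(\vect{m})}_{\vect{\kappa},\delta_{\vect{i}}}=L^{(\vect{m})}_{\vect{\kappa},\vect{i}}$ shows the $L^{(\vect{m})}_{\vect{\kappa},\vect{i}}$ form a basis of $\Pi^{(\vect{m})}_{\vect{\kappa}}$. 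The only delicate point is the normalization bookkeeping in the \emph{Identification} step: one must see that $2^{-\mathfrak{f}^{(\vect{m})}(\indexvectgamma)}/\|T_{\indexvectgamma}\|^2$ reproduces $1/\|\dchi^{(\vect{m})}_{\indexvectgamma}\|^2_{\omega^{(\vect{m})}_{\vect{\kappa}}}$ for every class representative except $\vectgamma^{\ast}$, and that the extra term $-T_{\indexvectgamma^{\ast}}(\vect{z})T_{\indexvectgamma^{\ast}}(\vect{x})$ in $L^{(\vect{m})}_{\vect{\kappa}}$ is exactly what corrects the anomalous value $\|\dchi^{(\vect{m})}_{\indexvectgamma^{\ast}}\|^2_{\omega^{(\vect{m})}_{\vect{\kappa}}}=1$ stemming from the class $\mathfrak{S}^{(\vect{m})}(\vect{0})$ in~\eqref{1509231405}; once that is settled, recognizing the sum as the reproducing kernel makes the Lagrange property automatic.
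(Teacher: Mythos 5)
Your proposal is correct and follows essentially the same route as the paper: both reduce the Lagrange property $L^{(\vect{m})}_{\vect{\kappa},\vect{i}}(\vect{z}^{(\vect{m})}_{\vect{j}})=\delta_{\vect{i},\vect{j}}$ to the discrete orthogonality of Theorem~\ref{1509232032} via the identity \eqref{1509082018}, check the same normalization bookkeeping (including that the subtracted $T_{\indexvectgamma^{\ast}}$ term corrects $\|\dchi^{(\vect{m})}_{\indexvectgamma^{\ast}}\|^2_{\omega^{(\vect{m})}_{\vect{\kappa}}}=1$), and finish by a dimension count. Your phrasing of the last step as recognizing the orthogonal expansion of $\delta_{\vect{i}}$ is just a repackaging of the paper's comparison of the inner products $\langle h_{\vect{i}},\dchi^{(\vect{m})}_{\indexvectgamma}\rangle_{\omega^{(\vect{m})}_{\vect{\kappa}}}$ and $\langle \delta_{\vect{i}},\dchi^{(\vect{m})}_{\indexvectgamma}\rangle_{\omega^{(\vect{m})}_{\vect{\kappa}}}$.
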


The polynomials  $L^{(\vect{m})}_{\vect{\kappa},\vect{i}}$ are the 
unique solution of the interpolation problem \eqref{1509082000} in $\Pi^{(\vect{m})}_{\vect{\kappa}}$ if $h$ is 
the Dirac delta function $h=\delta_{\vect{i}}$ on $\I^{(\vect{m})}_{\vect{\kappa}}$. For this reason, 
the polynomials $L^{(\vect{m})}_{\vect{\kappa},\vect{i}}$   are called the 
{\it fundamental solutions} of the interpolation problem \eqref{1509082000} in $\Pi^{(\vect{m})}_{\vect{\kappa}}$.

\bigskip

\begin{proof} For $h\in \mathcal{L}(\I^{(\vect{m})}_{\vect{\kappa}})$, we define $P^{(\vect{m})}_{\vect{\kappa},h}$ by \eqref{1509082003}. 
Since $L^{(\vect{m})}_{\vect{\kappa},\vect{i}} \in \Pi^{(\vect{m})}_{\vect{\kappa}}$, $\vect{i}\in \I^{(\vect{m})}_{\vect{\kappa}}$, also $P^{(\vect{m})}_{\vect{\kappa},h}\in \Pi^{(\vect{m})}_{\vect{\kappa}}$. For $\vect{i}\in\I^{(\vect{m})}_{\vect{\kappa}}$, we define $h_{\vect{i}}\in\mathcal{L}(\I^{(\vect{m})}_{\vect{\kappa}})$ by  $h_{\vect{i}}(\vect{i}')=L^{(\vect{m})}_{\vect{\kappa},\vect{i}}(\vect{z}_{\vect{i}'}^{(\vect{n})})$, $\vect{i}'\in\I^{(\vect{m})}_{\vect{\kappa}}$. 
Using the relation \eqref{1509082018} and the formulas \eqref{1509231405}, \eqref{1509082017}, we obtain
\[\begin{split}
h_{\vect{i}}(\vect{i}')&=\mathfrak{w}^{(\vect{m})}_{\vect{\kappa},\vect{i}} \left(\, \tsum_{\indexvectgamma \in \vect{\Gamma}^{(\vect{m})}_{\vect{\kappa}}} \dfrac{2^{-\mathfrak{f}^{(\vect{m})}(\indexvectgamma)}}{\|T_{\indexvectgamma}\|^2} T_{\indexvectgamma}(\vect{z}_{\vect{i}}^{(\vect{m})})\, T_{\indexvectgamma}(\vect{z}_{\vect{i}'}^{(\vect{m})})- T_{\indexvectgamma^{\ast}}(\vect{z}^{(\vect{m})}_{\vect{i}})\,T_{\indexvectgamma^{\ast}}(\vect{z}_{\vect{i}'}^{(\vect{m})}) \right)\\
&=\mathfrak{w}^{(\vect{m})}_{\vect{\kappa},\vect{i}} \tsum_{\indexvectgamma \in \vect{\Gamma}^{(\vect{m})}_{\vect{\kappa}}} \dfrac1{\|\dchi^{(\vect{m})}_{\indexvectgamma}\|_{\omega^{(\vect{m})}_{\vect{\kappa}}}^2}\dchi^{(\vect{m})}_{\indexvectgamma}(\vect{i})\dchi^{(\vect{m})}_{\indexvectgamma}(\vect{i}').
\end{split}\]
Now, for every $\vectgamma\in\vect{\Gamma}^{(\vect{m})}_{\vect{\kappa}}$ the Theorem \ref{1509232032} implies  
$\langle h_{\vect{i}},\dchi^{(\vect{m})}_{\indexvectgamma}\rangle_{\omega^{(\vect{m})}_{\vect{\kappa}}}=\mathfrak{w}^{(\vect{m})}_{\vect{\kappa},\vect{i}}\dchi^{(\vect{m})}_{\indexvectgamma}(\vect{i})$.
By the definition of  $\langle\,\cdot,\cdot\,\rangle_{\omega^{(\vect{m})}_{\vect{\kappa}}}$, we also have $\langle \delta_{\vect{i}},\dchi^{(\vect{m})}_{\indexvectgamma}\rangle_{\omega^{(\vect{m})}_{\vect{\kappa}}}=\mathfrak{w}^{(\vect{m})}_{\vect{\kappa},\vect{i}}\dchi^{(\vect{m})}_{\indexvectgamma}(\vect{i})$ for $\vectgamma\in\vect{\Gamma}^{(\vect{m})}_{\vect{\kappa}}$. 
Thus, since $\dchi^{(\vect{m})}_{\indexvectgamma}$, $\vectgamma\in\vect{\Gamma}^{(\vect{m})}_{\vect{\kappa}}$, form a basis of $\mathcal{L}(\I^{(\vect{m})}_{\vect{\kappa}})$,
we get $h_{\vect{i}}=\delta_{\vect{i}}$ for $\vect{i}\in \mathcal{L}(\I^{(\vect{m})}_{\vect{\kappa}})$. Hence, the function  $P^{(\vect{m})}_{\vect{\kappa},h}$ satisfies the interpolation condition \eqref{1509082000}. 

The homomorphism  $h\mapsto P^{(\vect{m})}_{\vect{\kappa},h}$  from the vector space $\mathcal{L}(\I^{(\vect{m})}_{\vect{\kappa}})$ 
into the vector space $\Pi^{(\vect{m})}_{\vect{\kappa}}$  is obviously injective.
Since $\dim \mathcal{L}(\I^{(\vect{m})}_{\vect{\kappa}}) = \#\I^{(\vect{m})}_{\vect{\kappa}}=\#\vect{\Gamma}^{(\vect{m})}_{\vect{\kappa}}=\dim \Pi^{(\vect{m})}_{\vect{\kappa}}$ by \eqref{1509301005},
this homomorphism is an isomorphism from $\mathcal{L}(\I^{(\vect{m})}_{\vect{\kappa}})$ onto  $\Pi^{(\vect{m})}_{\vect{\kappa}}$.
\end{proof}

\medskip

The definition \eqref{1608271750} of the polynomial spaces $\Pi^{(\vect{m})}_{\vect{\kappa}}$ requires the  selection 
of elements $\vectgamma$ from a class $[\vectgamma]$ in \eqref{1608241504} if this class has more than one element. 
For $\mathsf{d}\geq2$, this selection process concerns sometimes only one class, see Proposition~\ref{201512151534}.
In general, it seems however to be useful to overcome the pure arbitrariness in the selection of class elements. 
Therefore, we consider a second approach to obtain a suitable interpolation space of dimension  $\#\I^{(\vect{m})}_{\vect{\kappa}}$. We define
\[\overline{\Pi}^{(\vect{m})}_{\vect{\kappa}} = \vspan \left\{\, T_{\indexvectgamma}\,|\, \vectgamma \in \overline{\vect{\Gamma}}^{(\vect{m})}_{\vect{\kappa}}  \right\}.\]
Clearly, the Chebyshev polynomials $T_{\indexvectgamma}$, $\vectgamma \in \overline{\vect{\Gamma}}^{(\vect{m})}_{\vect{\kappa}}$, 
form an orthogonal basis of $\overline{\Pi}^{(\vect{m})}_{\vect{\kappa}}$ with respect to the inner product \eqref{1509060507}. 
Thus, this space has dimension $\#\overline{\vect{\Gamma}}^{(\vect{m})}_{\vect{\kappa}}$ and this dimension is larger 
than $\#\I^{(\vect{m})}_{\vect{\kappa}}$ if $\mathsf{d}\geq 2$.  If $\vect{\kappa}=\vect{0}$, we can pass over to a 
subspace by restricting to polynomials $P$ for which $\langle P, T_{\indexvectgamma'}\rangle$, $\vectgamma'\in [\vectgamma]$, is constant 
on each class $[\vectgamma]$. For general $\vect{\kappa}\in\mathbb{Z}^{\mathsf{d}}$, we may have to take 
into account anti-symmetries. Here, the following definition turns out to be appropriate. 
This approach was already used in the bivariate setting for the Morrow-Patterson-Xu points, see Example \ref{1609031131}.

\medskip

We use the definition \eqref{1609080610} for $\vect{\kappa}[\vectgamma,\vectgamma']$,  
and define the averaged polynomials
\begin{equation}
\label{1608252121}
 \oversim{T}_{\indexvectgamma}=\dfrac1{\#[\vectgamma]} \tsum_{\indexvectgamma'\in[\indexvectgamma]}(-1)^{\vect{\kappa}[\indexvectgamma,\indexvectgamma']}T_{\indexvectgamma'}\qquad\text{for}\quad \vectgamma\in\overline{\vect{\Gamma}}^{(\vect{m})}_{\vect{\kappa}}.
\end{equation}
By \eqref{1608241517} and \eqref{1609080605}, we have for $\vectgamma\in\overline{\vect{\Gamma}}^{(\vect{m})}_{\vect{\kappa}}$ the relation
\[\oversim{T}_{\indexvectgamma}=T_{\indexvectgamma}\qquad\Longleftrightarrow\qquad \vectgamma\in \vect{\Lambda}^{(\vect{m}),1}_{\vect{\kappa}}\quad \text{or}\quad \mathsf{d}=1.\]
In view of \eqref{1609081436}, \eqref{1509082017} and \eqref{1608252121} we have for  $\vectgamma,\vectgamma'\in\overline{\vect{\Gamma}}^{(\vect{m})}_{\vect{\kappa}}$
the identities
\begin{equation}\label{1608301335}
\langle \oversim{T}_{\indexvectgamma}, \oversim{T}_{\indexvectgamma'}\rangle=
\left\{\begin{array}{cl} 0 & \text{if $[\vectgamma']\neq [\vectgamma]$,} \\
                        (-1)^{\vect{\kappa}[\indexvectgamma,\indexvectgamma']} 2^{-\mathfrak{e}(\indexvectgamma)}  &  \text{if $[\vectgamma']= [\vectgamma]$}, \end{array} \right.
\end{equation}
and, in particular,
\begin{equation}
\label{1608301758}
\|\oversim{T}_{\indexvectgamma}\|^2  =\|T_{\indexvectgamma}\|^2  = 2^{-\mathfrak{e}(\indexvectgamma)}.
\end{equation}
Proposition \ref{1609080801} and relation \eqref{1509082018} give
for $\vectgamma\in\overline{\vect{\Gamma}}^{(\vect{m})}_{\vect{\kappa}}$ and  $\vect{i}\in \I^{(\vect{m})}_{\vect{\kappa}}$ the relation 
\begin{equation}\label{1608301730}
\oversim{T}_{\indexvectgamma}(\vect{z}^{(\vect{m})}_{\vect{i}}) = \displaystyle\dchi^{(\vect{m})}_{\indexvectgamma}(\vect{i}).
\end{equation}
Now, we define our second polynomial space for the interpolation as
\begin{equation}
\label{1608271751}
\oversim{\Pi}^{(\vect{m})}_{\vect{\kappa}} = \vspan \left\{\, \oversim{T}_{\indexvectgamma}\,|\, \vectgamma \in \overline{\vect{\Gamma}}^{(\vect{m})}_{\vect{\kappa}}  \right\}.
\end{equation}
The identities \eqref{1608301335}, \eqref{1608301758} and \eqref{B1509231303} immediately imply
\begin{equation}
\label{1608312133}
 \dim \oversim{\Pi}^{(\vect{m})}_{\vect{\kappa}}=\#\left[\overline{\vect{\Gamma}}^{(\vect{m})}_{\vect{\kappa}}\right]= \# \I^{(\vect{m})}_{\vect{\kappa}}.
\end{equation}
In view of \eqref{1608252121} and \eqref{1608301335}, the polynomial space \eqref{1608271751} can be characterized by
\[\oversim{\Pi}^{(\vect{m})}_{\vect{\kappa}}= \left\{\,P\in \overline{\Pi}^{(\vect{m})}_{\vect{\kappa}}\,\left|\ \langle P, T_{\indexvectgamma'
}\rangle = (-1)^{\vect{\kappa}[\indexvectgamma,\indexvectgamma']}\langle P, T_{\indexvectgamma}\rangle\quad\text{for all}\quad  \vectgamma'\in[\vectgamma],\ \vectgamma\in \overline{\vect{\Gamma}}^{(\vect{m})}_{\vect{\kappa}}\, \right.\right\}.\]
Using the definition \eqref{1608311805} for $\mathfrak{f}^{(\vect{m})}(\indexvectgamma)$ and \eqref{1609170212} for the set $\mathfrak{S}^{(\vect{m})}(\vect{0})$, 
we define 
\[\oversim{L}^{(\vect{m})}_{\vect{\kappa}}(\vect{z},\vect{x})=\tsum_{\indexvectgamma \in \overline{\vect{\Gamma}}^{(\vect{m})}_{\vect{\kappa}}}
\dfrac{2^{-\mathfrak{f}^{(\vect{m})}(\indexvectgamma)}}{\#[\vectgamma]\|{T}_{\indexvectgamma}\|^2}\, T_{\indexvectgamma}(\vect{z})T_{\indexvectgamma}(\vect{x}) 
- \dfrac1{\mathsf{d}} \tsum_{\indexvectgamma \in \mathfrak{S}^{(\vect{m})}(\vect{0})} T_{\indexvectgamma}(\vect{z})T_{\indexvectgamma}(\vect{x}),\]
where $\#[\vectgamma]$ denotes the number of elements of the class $[\vectgamma]$.  
For all $\vect{i} \in \I^{(\vect{m})}_{\vect{\kappa}}$, we set
\begin{equation} \label{1608270850}
 \oversim{L}^{(\vect{m})}_{\vect{\kappa},\vect{i}} =
 \mathfrak{w}^{(\vect{m})}_{\vect{\kappa},\vect{i}} \oversim{L}^{(\vect{m})}_{\vect{\kappa}}(\vect{z}^{(\vect{m})}_{\vect{i}},\,\cdot\,).
\end{equation}

\begin{proposition}  Let $\vect{\Gamma}^{(\vect{m})}_{\vect{\kappa}}$ be a set of unique representatives of $\left[\overline{\vect{\Gamma}}^{(\vect{m})}_{\vect{\kappa}}\right]$ given in \eqref{1608241504}.
\begin{enumerate}[a)]
\item The polynomials  $\oversim{T}_{\indexvectgamma}$, $\vectgamma \in \vect{\Gamma}^{(\vect{m})}_{\vect{\kappa}}$, 
 form an orthogonal basis of $\oversim{\Pi}^{(\vect{m})}_{\vect{\kappa}}$ with respect to the inner product \eqref{1509060507}.
\item Using the element $\vectgamma^{\ast}\in \vect{\Gamma}^{(\vect{m})}_{\vect{\kappa}}$ given in \eqref{1509301421}, for all $\vect{i} \in \I^{(\vect{m})}_{\vect{\kappa}}$ we can write
\begin{equation}\label{1608971650}
 \oversim{L}^{(\vect{m})}_{\vect{\kappa},\vect{i}}= \mathfrak{w}^{(\vect{m})}_{\vect{\kappa},\vect{i}} \left(\tsum_{\indexvectgamma \in \vect{\Gamma}^{(\vect{m})}_{\vect{\kappa}}}\!\!\dfrac{2^{-\mathfrak{f}^{(\vect{m})}(\indexvectgamma)}}{\|\oversim{T}_{\indexvectgamma}\|^2}  \oversim{T}_{\indexvectgamma}( \vect{z}^{(\vect{m})}_{\vect{i}})\, \oversim{T}_{\indexvectgamma} -
\oversim{T}_{\indexvectgamma^*}(\vect{z}^{(\vect{m})}_{\vect{i}})\,\oversim{T}_{\indexvectgamma^*}\right).
\end {equation}
\end{enumerate}
\end{proposition}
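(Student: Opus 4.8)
The plan is to establish a) by a short dimension count and b) by reorganizing the kernel $\oversim{L}^{(\vect{m})}_{\vect{\kappa}}$ along the class decomposition $\left[\overline{\vect{\Gamma}}^{(\vect{m})}_{\vect{\kappa}}\right]$ and evaluating the first slot at a node point.

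For part a), I would first note that $\oversim{T}_{\indexvectgamma}\in\oversim{\Pi}^{(\vect{m})}_{\vect{\kappa}}$ for every $\vectgamma\in\vect{\Gamma}^{(\vect{m})}_{\vect{\kappa}}\subseteq\overline{\vect{\Gamma}}^{(\vect{m})}_{\vect{\kappa}}$ directly from the definition \eqref{1608271751}. Since distinct representatives lie in distinct classes, \eqref{1608301335} shows that the family $\{\oversim{T}_{\indexvectgamma}:\vectgamma\in\vect{\Gamma}^{(\vect{m})}_{\vect{\kappa}}\}$ is pairwise orthogonal, and \eqref{1608301758} shows each member has squared norm $2^{-\mathfrak{e}(\indexvectgamma)}>0$; hence the family is linearly independent. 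Its cardinality $\#\vect{\Gamma}^{(\vect{m})}_{\vect{\kappa}}=\#\left[\overline{\vect{\Gamma}}^{(\vect{m})}_{\vect{\kappa}}\right]$ equals $\dim\oversim{\Pi}^{(\vect{m})}_{\vect{\kappa}}$ by \eqref{1509301005} and \eqref{1608312133}, so it is an orthogonal basis. This part is essentially bookkeeping with results already at hand.

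For part b), the starting point is the identity, valid for $\vectgamma\in\overline{\vect{\Gamma}}^{(\vect{m})}_{\vect{\kappa}}$, $\vectgamma'\in[\vectgamma]$ and $\vect{i}\in\I^{(\vect{m})}_{\vect{\kappa}}$,
\[T_{\indexvectgamma'}(\vect{z}^{(\vect{m})}_{\vect{i}})=\dchi^{(\vect{m})}_{\indexvectgamma'}(\vect{i})=(-1)^{\vect{\kappa}[\indexvectgamma,\indexvectgamma']}\dchi^{(\vect{m})}_{\indexvectgamma}(\vect{i})=(-1)^{\vect{\kappa}[\indexvectgamma,\indexvectgamma']}\oversim{T}_{\indexvectgamma}(\vect{z}^{(\vect{m})}_{\vect{i}}),\]
which follows from \eqref{1509082018}, Proposition~\ref{1609080801} and \eqref{1608301730}. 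Summing the product $T_{\indexvectgamma'}(\vect{z}^{(\vect{m})}_{\vect{i}})\,T_{\indexvectgamma'}(\vect{x})$ over $\vectgamma'\in[\vectgamma]$ and inserting the definition \eqref{1608252121} of $\oversim{T}_{\indexvectgamma}$ collapses it to $\#[\vectgamma]\,\oversim{T}_{\indexvectgamma}(\vect{z}^{(\vect{m})}_{\vect{i}})\,\oversim{T}_{\indexvectgamma}(\vect{x})$. I would then group the sum defining $\oversim{L}^{(\vect{m})}_{\vect{\kappa}}(\vect{z}^{(\vect{m})}_{\vect{i}},\vect{x})$ class by class, using a fixed set of representatives $\vect{\Gamma}^{(\vect{m})}_{\vect{\kappa}}$: the factors $2^{-\mathfrak{f}^{(\vect{m})}(\indexvectgamma)}$ and $\|T_{\indexvectgamma}\|^2$ are constant on each class, so they can be pulled out in front of the inner sum, the collapse above replaces that inner sum, and $\|T_{\indexvectgamma}\|^2=\|\oversim{T}_{\indexvectgamma}\|^2$ from \eqref{1608301758} turns the prefactor into $2^{-\mathfrak{f}^{(\vect{m})}(\indexvectgamma)}/\|\oversim{T}_{\indexvectgamma}\|^2$. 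Applying the same collapse to the single class $\mathfrak{S}^{(\vect{m})}(\vect{0})$, which has cardinality $\mathsf{d}$ and representative $\vectgamma^{\ast}$, turns the correction term $\tfrac1{\mathsf{d}}\sum_{\indexvectgamma\in\mathfrak{S}^{(\vect{m})}(\vect{0})}T_{\indexvectgamma}(\vect{z}^{(\vect{m})}_{\vect{i}})\,T_{\indexvectgamma}(\vect{x})$ into $\oversim{T}_{\indexvectgamma^{\ast}}(\vect{z}^{(\vect{m})}_{\vect{i}})\,\oversim{T}_{\indexvectgamma^{\ast}}(\vect{x})$; multiplying through by $\mathfrak{w}^{(\vect{m})}_{\vect{\kappa},\vect{i}}$ then gives \eqref{1608971650}.

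The only non-formal point, which I would record as a small observation before the main computation, is that $\mathfrak{e}$ and $\mathfrak{f}^{(\vect{m})}$ are constant on every class of $\left[\overline{\vect{\Gamma}}^{(\vect{m})}_{\vect{\kappa}}\right]$. For a non-singleton class Proposition~\ref{201512041410} says each element has exactly one coordinate with ratio $>1/2$ and all others $<1/2$, so no coordinate equals $m_{\mathsf{i}}/2$ and hence $\mathfrak{f}^{(\vect{m})}\equiv0$ on such a class; and passing from $\vecteta\in\vect{\Gamma}^{(\vect{m})}_{\vect{0},1}$ to $\mathfrak{s}^{(\vect{m})}_{\mathsf{k}}(\vecteta)$ changes only the $\mathsf{k}$-th coordinate, from a positive number to a positive number, which preserves $\mathfrak{e}$; for singleton classes there is nothing to check. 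I expect this class-invariance, together with keeping the sign factors $(-1)^{\vect{\kappa}[\indexvectgamma,\indexvectgamma']}$ straight when the averaging in \eqref{1608252121} is re-expanded inside the grouped sum, to be the only places requiring a little care; the rest is a direct rearrangement.
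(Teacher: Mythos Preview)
Your proposal is correct and follows essentially the same route as the paper: part~a) is the same dimension/orthogonality count via \eqref{1608301335}, \eqref{1608301758}, \eqref{1509301005}, \eqref{1608312133}, and part~b) is the same class-by-class regrouping of the kernel using \eqref{1608252121}, \eqref{1608301758}, \eqref{1608301730} and Proposition~\ref{1609080801}, together with the class-invariance of $\mathfrak{f}^{(\vect{m})}$ (which the paper also singles out explicitly). One tiny imprecision: your justification that $\mathfrak{e}$ is constant on a non-singleton class (``from a positive number to a positive number'') tacitly assumes $\eta_{\mathsf{k}}>0$ and so does not literally cover the class $\mathfrak{S}^{(\vect{m})}(\vect{0})$; there every element has exactly one nonzero coordinate and $\mathfrak{e}\equiv1$, so the claim still holds.
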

\begin{proof}
By \eqref{1608301335}, the polynomials $\oversim{T}_{\indexvectgamma}$, 
$\vectgamma\in \vect{\Gamma}^{(\vect{m})}_{\vect{\kappa}}$, are pairwise orthogonal. 
Thus, since $\|\oversim{T}_{\indexvectgamma}\|\neq 0$ by \eqref{1608301758}, the polynomials $\oversim{T}_{\indexvectgamma}$, $\vectgamma\in \vect{\Gamma}^{(\vect{m})}_{\vect{\kappa}}$,
are linearly independent. We have $\dim \oversim{\Pi}^{(\vect{m})}_{\vect{\kappa}}=\#\vect{\Gamma}^{(\vect{m})}_{\vect{\kappa}}$ by \eqref{1608312133} and \eqref{1509301005}
and can therefore conclude a). 

For every $\vectgamma'\in \overline{\vect{\Gamma}}^{(\vect{m})}_{\vect{\kappa}}$ there is by assumption \eqref{1509301005} and definition \eqref{1608252121} precisely one 
$\vectgamma\in \vect{\Gamma}^{(\vect{m})}_{\vect{\kappa}}$ such that $\langle \oversim{T}_{\indexvectgamma},T_{\indexvectgamma'}\rangle\neq 0$. In fact, this unique element is given by
$\vectgamma\in \vect{\Gamma}^{(\vect{m})}_{\vect{\kappa}}$ with $\vectgamma\in [\vectgamma']$.
Further, we have $\mathfrak{f}^{(\vect{m})}(\vectgamma')=\mathfrak{f}^{(\vect{m})}(\vectgamma)$ if $\indexvectgamma'\in [\vectgamma]$.
A simple computation using  \eqref{1608301758}, \eqref{1608252121}, \eqref{1608301730}, and Proposition \ref{1609080801}  gives statement b).
\end{proof}

In the new polynomial space $\oversim{\Pi}^{(\vect{m})}_{\vect{\kappa}}$, we want to find the interpolating polynomial $\oversim{P}^{(\vect{m})}_{\vect{\kappa},h}$ that
satisfies the interpolation conditions
\begin{equation}
\label{1608302037} 
\oversim{P}^{(\vect{m})}_{\vect{\kappa},h} (\vect{z}^{(\vect{m})}_{\vect{i}}) = h({\vect{i}}) \quad \text{for all}\quad \vect{i} \in \I^{(\vect{m})}_{\vect{\kappa}}.
\end{equation}

\begin{theorem} \label{201512131945}
Let $h\in\mathcal{L}(\I^{(\vect{m})}_{\vect{\kappa}})$. 
The interpolation problem  \eqref{1608302037}  has a unique solution 
in the polynomial space $\oversim{\Pi}^{(\vect{m})}_{\vect{\kappa}}$ given by 
\begin{equation} \label{201513121708} 
\oversim{P}^{(\vect{m})}_{\vect{\kappa},h} = \tsum_{\vect{i} \in \I^{(\vect{m})}_{\vect{\kappa}}} h({\vect{i}}) \oversim{L}^{(\vect{m})}_{\vect{\kappa},\vect{i}}.
\end{equation}
Furthermore, we have $\left\{\,\oversim{P}^{(\vect{m})}_{\vect{\kappa},h}\,\left|\,h\in \mathcal{L}(\I^{(\vect{m})}_{\vect{\kappa}})\right.\,\right\} = \oversim{\Pi}^{(\vect{m})}_{\vect{\kappa}}$, and the  polynomial functions  $\oversim{L}^{(\vect{m})}_{\vect{\kappa},\vect{i}}$, $\vect{i} \in \I^{(\vect{m})}_{\vect{\kappa}}$, form a basis of the vector space $\oversim{\Pi}^{(\vect{m})}_{\vect{\kappa}}$. 
\end{theorem}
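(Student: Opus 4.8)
The plan is to mirror the proof of Theorem~\ref{1509082002} step by step, with $\vect{\Gamma}^{(\vect{m})}_{\vect{\kappa}}$, $T_{\indexvectgamma}$, $L^{(\vect{m})}_{\vect{\kappa},\vect{i}}$ replaced throughout by $\overline{\vect{\Gamma}}^{(\vect{m})}_{\vect{\kappa}}$, $\oversim{T}_{\indexvectgamma}$, $\oversim{L}^{(\vect{m})}_{\vect{\kappa},\vect{i}}$, and the discrete orthogonality Theorem~\ref{1509232032} as the engine. First I would fix $h\in\mathcal{L}(\I^{(\vect{m})}_{\vect{\kappa}})$ and define $\oversim{P}^{(\vect{m})}_{\vect{\kappa},h}$ by \eqref{201513121708}. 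By the definition \eqref{1608270850} together with representation \eqref{1608971650} from the preceding Proposition, each $\oversim{L}^{(\vect{m})}_{\vect{\kappa},\vect{i}}$ is a linear combination of the $\oversim{T}_{\indexvectgamma}$, $\vectgamma\in\vect{\Gamma}^{(\vect{m})}_{\vect{\kappa}}$, and hence lies in $\oversim{\Pi}^{(\vect{m})}_{\vect{\kappa}}$; consequently $\oversim{P}^{(\vect{m})}_{\vect{\kappa},h}\in\oversim{\Pi}^{(\vect{m})}_{\vect{\kappa}}$.

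Next I would verify the interpolation condition. For $\vect{i},\vect{i}'\in\I^{(\vect{m})}_{\vect{\kappa}}$ set $h_{\vect{i}}(\vect{i}')=\oversim{L}^{(\vect{m})}_{\vect{\kappa},\vect{i}}(\vect{z}^{(\vect{m})}_{\vect{i}'})$. Starting from \eqref{1608971650}, I apply the evaluation identity \eqref{1608301730}, $\oversim{T}_{\indexvectgamma}(\vect{z}^{(\vect{m})}_{\vect{i}})=\dchi^{(\vect{m})}_{\indexvectgamma}(\vect{i})$, and match the normalization constants: by \eqref{1608301758} we have $\|\oversim{T}_{\indexvectgamma}\|^2=2^{-\mathfrak{e}(\indexvectgamma)}$, so by \eqref{1509231405} the coefficient $2^{-\mathfrak{f}^{(\vect{m})}(\indexvectgamma)}/\|\oversim{T}_{\indexvectgamma}\|^2$ equals $1/\|\dchi^{(\vect{m})}_{\indexvectgamma}\|^2_{\omega^{(\vect{m})}_{\vect{\kappa}}}$ for every $\vectgamma\in\overline{\vect{\Gamma}}^{(\vect{m})}_{\vect{\kappa}}\setminus\mathfrak{S}^{(\vect{m})}(\vect{0})$, while for $\vectgamma^{\ast}\in\mathfrak{S}^{(\vect{m})}(\vect{0})$ the two terms in \eqref{1608971650} combine to the coefficient $1=1/\|\dchi^{(\vect{m})}_{\indexvectgamma^{\ast}}\|^2_{\omega^{(\vect{m})}_{\vect{\kappa}}}$. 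This yields
\[ h_{\vect{i}}(\vect{i}')=\mathfrak{w}^{(\vect{m})}_{\vect{\kappa},\vect{i}}\tsum_{\indexvectgamma\in\vect{\Gamma}^{(\vect{m})}_{\vect{\kappa}}}\dfrac{1}{\|\dchi^{(\vect{m})}_{\indexvectgamma}\|^2_{\omega^{(\vect{m})}_{\vect{\kappa}}}}\,\dchi^{(\vect{m})}_{\indexvectgamma}(\vect{i})\,\dchi^{(\vect{m})}_{\indexvectgamma}(\vect{i}'), \]
which is exactly the expression for $h_{\vect{i}}$ appearing in the proof of Theorem~\ref{1509082002}. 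Since the $\dchi^{(\vect{m})}_{\indexvectgamma}$, $\vectgamma\in\vect{\Gamma}^{(\vect{m})}_{\vect{\kappa}}$, form an orthogonal basis of $(\mathcal{L}(\I^{(\vect{m})}_{\vect{\kappa}}),\langle\,\cdot,\cdot\,\rangle_{\omega^{(\vect{m})}_{\vect{\kappa}}})$ by Theorem~\ref{1509232032}~b), and $\langle\delta_{\vect{i}},\dchi^{(\vect{m})}_{\indexvectgamma}\rangle_{\omega^{(\vect{m})}_{\vect{\kappa}}}=\mathfrak{w}^{(\vect{m})}_{\vect{\kappa},\vect{i}}\dchi^{(\vect{m})}_{\indexvectgamma}(\vect{i})$ by definition of the inner product, the reproducing-kernel argument gives $h_{\vect{i}}=\delta_{\vect{i}}$, i.e. $\oversim{L}^{(\vect{m})}_{\vect{\kappa},\vect{i}}(\vect{z}^{(\vect{m})}_{\vect{i}'})=\delta_{\vect{i},\vect{i}'}$. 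Hence $\oversim{P}^{(\vect{m})}_{\vect{\kappa},h}$ satisfies \eqref{1608302037}.

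Finally I would deduce uniqueness and the basis statements from a dimension count. The linear map $h\mapsto\oversim{P}^{(\vect{m})}_{\vect{\kappa},h}$ from $\mathcal{L}(\I^{(\vect{m})}_{\vect{\kappa}})$ into $\oversim{\Pi}^{(\vect{m})}_{\vect{\kappa}}$ is injective, because $\oversim{P}^{(\vect{m})}_{\vect{\kappa},h}=0$ forces $h(\vect{i})=\oversim{P}^{(\vect{m})}_{\vect{\kappa},h}(\vect{z}^{(\vect{m})}_{\vect{i}})=0$ for all $\vect{i}$. By \eqref{1608312133} we have $\dim\oversim{\Pi}^{(\vect{m})}_{\vect{\kappa}}=\#\I^{(\vect{m})}_{\vect{\kappa}}=\dim\mathcal{L}(\I^{(\vect{m})}_{\vect{\kappa}})$, so this map is an isomorphism. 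This immediately gives $\{\,\oversim{P}^{(\vect{m})}_{\vect{\kappa},h}\,|\,h\in\mathcal{L}(\I^{(\vect{m})}_{\vect{\kappa}})\,\}=\oversim{\Pi}^{(\vect{m})}_{\vect{\kappa}}$ and the uniqueness of the interpolant in $\oversim{\Pi}^{(\vect{m})}_{\vect{\kappa}}$; moreover, transporting the basis $\{\delta_{\vect{i}}\}$ of $\mathcal{L}(\I^{(\vect{m})}_{\vect{\kappa}})$ through the isomorphism and noting $\oversim{P}^{(\vect{m})}_{\vect{\kappa},\delta_{\vect{i}}}=\oversim{L}^{(\vect{m})}_{\vect{\kappa},\vect{i}}$, we conclude that the $\oversim{L}^{(\vect{m})}_{\vect{\kappa},\vect{i}}$, $\vect{i}\in\I^{(\vect{m})}_{\vect{\kappa}}$, form a basis of $\oversim{\Pi}^{(\vect{m})}_{\vect{\kappa}}$.

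The main obstacle I expect is purely the bookkeeping in the middle step: carefully checking that the constants $2^{-\mathfrak{f}^{(\vect{m})}(\indexvectgamma)}/(\#[\vectgamma]\,\|T_{\indexvectgamma}\|^2)$ of \eqref{1608971650}, after passing to the $\oversim{T}$-basis, really reproduce $1/\|\dchi^{(\vect{m})}_{\indexvectgamma}\|^2_{\omega^{(\vect{m})}_{\vect{\kappa}}}$, with the correction term on $\mathfrak{S}^{(\vect{m})}(\vect{0})$ accounting for the anomalous value $\|\dchi^{(\vect{m})}_{\indexvectgamma^{\ast}}\|^2_{\omega^{(\vect{m})}_{\vect{\kappa}}}=1$ in \eqref{1509231405}; once this identification is in place the argument is a verbatim transcription of the proof of Theorem~\ref{1509082002}.
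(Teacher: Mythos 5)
Your proposal is correct and follows essentially the same route as the paper: the authors' own proof consists precisely of invoking \eqref{1608971650}, \eqref{1608301730} and \eqref{1608301758} and then repeating the argument of Theorem~\ref{1509082002} verbatim, which is exactly what you carry out (including the correct bookkeeping that identifies $2^{-\mathfrak{f}^{(\vect{m})}(\indexvectgamma)}/\|\oversim{T}_{\indexvectgamma}\|^2$ with $1/\|\dchi^{(\vect{m})}_{\indexvectgamma}\|^2_{\omega^{(\vect{m})}_{\vect{\kappa}}}$ and the correction on $\mathfrak{S}^{(\vect{m})}(\vect{0})$). No gaps.
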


The fundamental polynomials $\oversim{L}^{(\vect{m})}_{\vect{\kappa},\vect{i}}$ are the unique solutions of \eqref{1608302037} for $h=\delta_{\vect{i}}$. 

\medskip

\begin{proof}
Choose a set $\vect{\Gamma}^{(\vect{m})}_{\vect{\kappa}}\subseteq\overline{\vect{\Gamma}}^{(\vect{m})}_{\vect{\kappa}}$ satisfying \eqref{1509301005}. 
Using \eqref{1608971650}, \eqref{1608301730}, and \eqref{1608301758}, the proof is accomplished by following the lines of the proof of Theorem~\ref{1509082002}. 
\end{proof}


\section{Computing the interpolating polynomials}
\label{17008191038}

In order to obtain a numerical scheme for the computation, we consider the expansions
\begin{equation}\label{1509241909B}
P^{(\vect{m})}_{\vect{\kappa},h}(\vect{x}) = \sum_{\indexvectgamma \in \vect{\Gamma}^{(\vect{m})}_{\vect{\kappa}}} c_{\indexvectgamma}(h)\;\!T_{\indexvectgamma}(\vect{x})
\end{equation}
and
\begin{equation}\label{1509241909C}
\oversim{P}^{(\vect{m})}_{\vect{\kappa},h}(\vect{x}) = \sum_{\indexvectgamma \in \overline{\vect{\Gamma}}^{(\vect{m})}_{\vect{\kappa}}} 
\frac{{c}_{\indexvectgamma}(h)}{\#[\vectgamma]} \;\!
T_{\indexvectgamma}(\vect{x}) 
\end{equation}
of the interpolating polynomials $P^{(\vect{m})}_{\vect{\kappa},h}$ and 
$\oversim{P}^{(\vect{m})}_{\vect{\kappa},h}$ in terms of Chebyshev polynomials $T_{\indexvectgamma}$, $\vectgamma \in \vect{\Gamma}^{(\vect{m})}_{\vect{\kappa}}$. 
This expansion enables us to compute the 
interpolating polynomials by first computing the expansion coefficients $c_{\indexvectgamma}(h)$ and then
evaluating the sums in \eqref{1509241909B} and \eqref{1509241909C}. Both steps can be conducted efficiently by using fast Fourier algorithms. 
The idea to evaluate polynomial interpolants in this way is very common for spectral methods. For the Padua points, it was studied in \cite{CaliariDeMarchiSommarivaVianello2011}. For 
polynomial interpolation on the node points of Lissajous curves, it was also already introduced in \cite{DenckerErb2015a,ErbKaethnerAhlborgBuzug2015}.  

\medskip

We first take a look at the computation of the coefficients $c_{\indexvectgamma}(h)$. By Theorem \ref{1509082002} and definition \eqref{1509082001} for the expansion \eqref{1509241909B}, 
as well as by Theorem \ref{201512131945} and definition \eqref{1608270850} for the second expansion \eqref{1509241909C}, we obtain the following identity. 

\begin{corollary} \label{cor:161008}
For $h\in\mathcal{L}(\I^{(\vect{m})}_{\vect{\kappa}})$, the unique coefficients $c_{\indexvectgamma}(h)$ in \eqref{1509241909B} and \eqref{1509241909C} are
\begin{align*}
c_{\indexvectgamma}(h) &= \dfrac1{\|\dchi^{(\vect{m})}_{\indexvectgamma}\|_{\omega^{(\vect{m})}}^2}\,\langle\;\! h,\dchi^{(\vect{m})}_{\indexvectgamma}\rangle_{\omega^{(\vect{m})}},\quad \vectgamma \in \overline{\vect{\Gamma}}^{(\vect{m})}_{\vect{\kappa}}.
\end{align*}
\end{corollary}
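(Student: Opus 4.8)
The plan is to derive the formula for $c_{\indexvectgamma}(h)$ directly from the two interpolation theorems by expressing the interpolating polynomials in terms of the discrete orthogonality structure of Section~\ref{17008191036}. The key observation is that evaluating any polynomial from $\Pi^{(\vect{m})}_{\vect{\kappa}}$ or $\oversim{\Pi}^{(\vect{m})}_{\vect{\kappa}}$ at the node points $\vect{z}^{(\vect{m})}_{\vect{i}}$, $\vect{i}\in\I^{(\vect{m})}_{\vect{\kappa}}$, and using the fundamental relations \eqref{1509082018} resp.\ \eqref{1608301730}, turns statements about the $T_{\indexvectgamma}$ on $[-1,1]^{\mathsf{d}}$ into statements about the $\dchi^{(\vect{m})}_{\indexvectgamma}$ on $\I^{(\vect{m})}_{\vect{\kappa}}$, where Theorem~\ref{1509232032} provides the orthogonality.

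First I would treat the expansion \eqref{1509241909B}. Since $\vect{\Gamma}^{(\vect{m})}_{\vect{\kappa}}$ is a set of unique representatives and $\#[\vectgamma]=1$ on it only when $\vectgamma\in\vect{\Lambda}^{(\vect{m}),1}_{\vect{\kappa}}$, I would take the interpolation conditions $P^{(\vect{m})}_{\vect{\kappa},h}(\vect{z}^{(\vect{m})}_{\vect{i}})=h(\vect{i})$ and rewrite them via \eqref{1509082018} as $\sum_{\indexvectgamma\in\vect{\Gamma}^{(\vect{m})}_{\vect{\kappa}}}c_{\indexvectgamma}(h)\,\dchi^{(\vect{m})}_{\indexvectgamma}(\vect{i})=h(\vect{i})$ for all $\vect{i}\in\I^{(\vect{m})}_{\vect{\kappa}}$; in other words $h=\sum_{\indexvectgamma\in\vect{\Gamma}^{(\vect{m})}_{\vect{\kappa}}}c_{\indexvectgamma}(h)\,\dchi^{(\vect{m})}_{\indexvectgamma}$ as an identity in $\mathcal{L}(\I^{(\vect{m})}_{\vect{\kappa}})$. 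By Theorem~\ref{1509232032}~b), the $\dchi^{(\vect{m})}_{\indexvectgamma}$ with $\vectgamma\in\vect{\Gamma}^{(\vect{m})}_{\vect{\kappa}}$ form an orthogonal basis of $(\mathcal{L}(\I^{(\vect{m})}_{\vect{\kappa}}),\langle\cdot,\cdot\rangle_{\omega^{(\vect{m})}_{\vect{\kappa}}})$, so pairing with a fixed $\dchi^{(\vect{m})}_{\indexvectgamma}$ and using \eqref{1609080736} (with $\vectgamma'=\vectgamma$) immediately gives $c_{\indexvectgamma}(h)=\langle h,\dchi^{(\vect{m})}_{\indexvectgamma}\rangle_{\omega^{(\vect{m})}_{\vect{\kappa}}}/\|\dchi^{(\vect{m})}_{\indexvectgamma}\|_{\omega^{(\vect{m})}_{\vect{\kappa}}}^2$. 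Uniqueness of the $c_{\indexvectgamma}(h)$ follows because $\{T_{\indexvectgamma}\}_{\indexvectgamma\in\vect{\Gamma}^{(\vect{m})}_{\vect{\kappa}}}$ is a basis of $\Pi^{(\vect{m})}_{\vect{\kappa}}$.

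Next I would handle \eqref{1509241909C}. Here one writes $\oversim{P}^{(\vect{m})}_{\vect{\kappa},h}$ in the averaged basis $\oversim{T}_{\indexvectgamma}$, $\vectgamma\in\vect{\Gamma}^{(\vect{m})}_{\vect{\kappa}}$, from the preceding Proposition; collecting the Chebyshev polynomials via the definition \eqref{1608252121} of $\oversim{T}_{\indexvectgamma}$ and using $\mathfrak{f}^{(\vect{m})}$ being constant on each class together with Proposition~\ref{1609080801}, one sees that the coefficient of $T_{\indexvectgamma'}$ in $\oversim{P}^{(\vect{m})}_{\vect{\kappa},h}$ equals (up to the sign $(-1)^{\vect{\kappa}[\indexvectgamma,\indexvectgamma']}$, which is absorbed since $\oversim{T}_{\indexvectgamma}(\vect z^{(\vect m)}_{\vect i})=\dchi^{(\vect m)}_{\indexvectgamma}(\vect i)$ and the $c_{\indexvectgamma}$ transform accordingly) the representative coefficient $c_{\indexvectgamma}(h)$ divided by $\#[\vectgamma']$. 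Evaluating $\oversim{P}^{(\vect{m})}_{\vect{\kappa},h}$ at the nodes via \eqref{1608301730} again reduces to the same linear system in $\mathcal{L}(\I^{(\vect{m})}_{\vect{\kappa}})$, so the coefficients $c_{\indexvectgamma}(h)$ are forced to be the same ones, proving the single unified formula.

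The main obstacle, such as it is, is purely bookkeeping: making sure that the sign factors $(-1)^{\vect{\kappa}[\indexvectgamma,\indexvectgamma']}$ and the class sizes $\#[\vectgamma]$ combine so that \eqref{1509241909C} really does have all coefficients within one class equal to $c_{\indexvectgamma}(h)/\#[\vectgamma]$ with $c_{\indexvectgamma}(h)$ \emph{independent of the chosen representative}; this independence is exactly the characterization of $\oversim{\Pi}^{(\vect{m})}_{\vect{\kappa}}$ stated right after \eqref{1608312133}, so I would invoke that. Beyond this the argument is a one-line consequence of Theorem~\ref{1509232032}~b), and I expect the proof in the paper to simply say that it follows from Theorems~\ref{1509082002} and \ref{201512131945} together with the orthogonality \eqref{1609080736}.
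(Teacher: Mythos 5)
Your proposal is correct. The paper's own (one-line) proof goes the other way around: it substitutes the explicit definitions \eqref{1509082001} and \eqref{1608270850} of the fundamental polynomials into $P^{(\vect{m})}_{\vect{\kappa},h}=\sum_{\vect{i}}h(\vect{i})L^{(\vect{m})}_{\vect{\kappa},\vect{i}}$ (resp.\ \eqref{201513121708}), interchanges the order of summation, and reads the coefficient of $T_{\indexvectgamma}$ off the kernel, using \eqref{1509082018} together with the identity $2^{-\mathfrak{f}^{(\vect{m})}(\indexvectgamma)}/\|T_{\indexvectgamma}\|^2=1/\|\dchi^{(\vect{m})}_{\indexvectgamma}\|^2_{\omega^{(\vect{m})}_{\vect{\kappa}}}$ coming from \eqref{1509231405} and \eqref{1509082017} (with the $\vectgamma^{\ast}$ correction term accounting for the case $\vectgamma\in\mathfrak{S}^{(\vect{m})}(\vect{0})$). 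You instead evaluate the expansions at the nodes and invert the resulting identity $h=\sum_{\indexvectgamma}c_{\indexvectgamma}(h)\dchi^{(\vect{m})}_{\indexvectgamma}$ in $\mathcal{L}(\I^{(\vect{m})}_{\vect{\kappa}})$ via the discrete orthogonality of Theorem~\ref{1509232032}~b); this is a dual but equally immediate organization of the same ingredients, and it has the small advantage of avoiding the case distinction at $\vectgamma^{\ast}$, at the price of having to argue separately (as you do, via Proposition~\ref{1609080801} and the constancy of $\#[\vectgamma]$ and of the norms on each class) that the formula extends consistently from the representatives $\vect{\Gamma}^{(\vect{m})}_{\vect{\kappa}}$ to all of $\overline{\vect{\Gamma}}^{(\vect{m})}_{\vect{\kappa}}$ so as to match \eqref{1509241909C}.
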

Using Corollary \ref{cor:161008}, the coefficients $c_{\indexvectgamma}(h)$ can be computed efficiently using discrete cosine transforms along the $\mathsf{d}$ dimensions of the index set 
\[ \J^{(\vect{m})}= \bigtimes_{\substack{\vspace{-8pt}\\\mathsf{i}=1}}^{\substack{\mathsf{d}\\\vspace{-10pt}}} \{0,\ldots,m_{\mathsf{i}}\}.\]
We introduce
\begin{align*} g^{(\vect{m})}_{\vect{\kappa}}(\vect{i}) &= \left\{ \begin{array}{cl} \mathfrak{w}^{(\vect{m})}_{\vect{i}} h(\vect{i}), \quad  & \text{if}\; \vect{i}\in\I^{(\vect{m})}_{\vect{\kappa}},\rule[-0.65em]{0pt}{1em}\\ 
       0, \quad 
  & \text{if}\; \vect{i}\in\J^{(\vect{m})} \setminus \I^{(\vect{m})}_{\vect{\kappa}}. \end{array} \right. 
\end{align*}
Now, we define the $\mathsf{d}$-dimensional discrete cosine transform $\hat{g}^{(\vect{m})}_{\vect{\kappa},\indexvectgamma}$ of $g^{(\vect{m})}_{\vect{\kappa}}$ starting with
\[ \hat{g}^{(\vect{m})}_{\vect{\kappa},(\gamma_1)}(i_{2},\ldots,i_{\mathsf{d}}) = \sum_{i_{\mathsf{i}}=0}^{m_{\mathsf{1}}} g^{(\vect{m})}_{\vect{\kappa}}(\vect{i})\cos(\gamma_{\mathsf{1}}i_{1}\pi/m_{1}).\]
and, then proceeding recursively for $\mathsf{i}=2,\ldots,\mathsf{d}$ with
\[\hat{g}^{(\vect{m})}_{\vect{\kappa},(\gamma_1,\ldots,\gamma_{\mathsf{i}})}(i_{\mathsf{i}+1},\ldots,i_{\mathsf{d}})= \sum_{i_{\mathsf{i}}=0}^{n_{\mathsf{i}}}
\hat{g}^{(\vect{m})}_{\vect{\kappa},(\gamma_1,\ldots,\gamma_{\mathsf{i}-1})}(i_{\mathsf{i}},\ldots,i_{\mathsf{d}})\cos(\gamma_{\mathsf{i}}i_{\mathsf{i}}\pi/n_{\mathsf{i}}).\]
Including the formula \eqref{1509231405} for the norm $\|\dchi^{(\vect{m})}_{\indexvectgamma}\|_{\omega^{(\vect{m})}_{\vect{\kappa}}}^2$, we obtain in this way 
\[c_{\indexvectgamma}(h) = 
\left\{ \begin{array}{rl}  2^{\mathfrak{e}(\indexvectgamma)-\mathfrak{f}^{(\vect{m})}(\indexvectgamma)} \hat{g}^{(\vect{m})}_{\vect{\kappa}, \indexvectgamma} ,\; & 
\text{if}\quad \vectgamma
 \in\overline{\vect{\Gamma}}^{(\vect{m})}_{\vect{\kappa}}\setminus\mathfrak{S}^{(\vect{m})}(\vect{0}),\\[0.3em]
   \hat{g}^{(\vect{m})}_{\vect{\kappa}, \indexvectgamma},\; & \text{if}\quad \vectgamma \in\mathfrak{S}^{(\vect{m})}(\vect{0}). \end{array} \right.\] 
As a composition of $\mathsf{d}$ fast cosine transform, the computation of the set of coefficients $c_{\indexvectgamma}(h)$ can be conducted in
$\mathcal{O}\!\left(\!\!\;\p[\vect{m}] \ln \!\!\;\p[\vect{m}] \right)$ arithmetic operations. 
Once the coefficients $c_{\indexvectgamma}(h)$ are calculated, the evaluations of the interpolating polynomials
$P^{(\vect{m})}_{\vect{\kappa},h}(\vect{x})$ and $\oversim{P}^{(\vect{m})}_{\vect{\kappa},h}(\vect{x})$ at different points $\vect{x} \in [-1,1]^{\mathsf{d}}$ 
can be computed using \eqref{1509241909B} and \eqref{1509241909C}. If the evalution points are lying on a suitable grid in $[-1,1]^{\mathsf{d}}$ 
also the sum evaluations in \eqref{1509241909B} and \eqref{1509241909C} can be carried out efficiently using fast Fourier methods.


\section{Examples}\label{1609031100}

\subsection{Degenerate Lissajous curves and Padua points}

An important class of node points that fit into the framework of this article are interpolation points that are generated by equidistant
sampling along a degenerate Lissajous curves. In the bivariate setting, the most prominent examples are the Padua points. They were introduced in
\cite{CaliariDeMarchiVianello2005} and investigated profoundly in a series of papers \cite{BosDeMarchiVianelloXu2006,BosDeMarchiVianelloXu2007,CaliariDeMarchiVianello2008}. Recently,
this theory was extended to  multivariate Lissajous curves \cite{DenckerErb2015a,Erb2015,ErbKaethnerDenckerAhlborg2015}.

\medskip

The respective point sets are given by $\LC^{(\vect{n})}_{\vect{\kappa}}$, where $\vect{n}=(n_1,\ldots,n_{\mathsf{d}})$ is such that
\begin{equation}\label{16009031509} 
n_1,\ldots,n_{\mathsf{d}}\in \mathbb{N}\; \quad \text{are pairwise relatively prime}.
\end{equation}
The interpolation theory 
corresponding to these sets is described in \cite{DenckerErb2015a} for $\vect{\kappa}=\vect{0}$. Note that the sets $\LC^{(\vect{n})}_{\vect{\kappa}}$ coincide with $\LC^{(\vect{n})}_{\vect{0}}$ up to reflections with respect to the coordinate axis. We give a short summary using the statements of this work.

\medskip

Since the entries of $\vect{n}$ are pairwise relatively prime, the integer vectors $\vect{n}^{\sharp}$ and $\vect{n}^{\flat}$ according to Proposition~\ref{1509061252} are unique and $\vect{n}^{\sharp} = \vect{n}$, $\vect{n}^{\flat} = \vect{1}$.
Furthermore, we need only to consider the set $H^{(\vect{n})} = H^{(\vect{n}^{\sharp})} = \{0,\ldots,2 \p[\vect{n}]-1\}$ in Proposition \ref{1509221521} whereas $R^{(\vect{n}^{\flat})} = \{ \vect{0} \}$ plays no role in this case. We have
\[ \# \LC^{(\vect{n})}_{\vect{\kappa}} = \# \I^{(\vect{n})}_{\vect{\kappa}} = \# \vect{\Gamma}^{(\vect{n})}_{\vect{\kappa}} = \dfrac1{2^{\mathsf{d}-1}}\p[\vect{n}+\vect{1}].
\]
The set $\vect{\mathfrak{L}}^{(\vect{n}, \, \vect{1})}_{\vect{\kappa}}$ given in \eqref{1609010824} contains only the degenerate Lissajous curve 
$\vect{\ell}^{(\vect{n})}_{\vect{\kappa}}$. In \cite[Theorem 1.1]{DenckerErb2015a} it is shown that the degenerate curve $\vect{\ell}^{(\vect{n})}_{\vect{\kappa}}(t)$ intersects every point $\vect{z}_{\vect{i}}^{(\vect{n})}$ with $\vect{i} \in \I^{(\vect{n})}_{\vect{\kappa},\mathsf{M}}$ and \text{$\mathsf{M} \subseteq \{1,\ldots \mathsf{d}\}$} at precisely $2^{ \# \mathsf{M}}$ points $t \in [0,2\pi)$.

If $\mathsf{d}\geq 2$, exactly one class in the decomposition $\left[\overline{\vect{\Gamma}}^{(\vect{n})}_{\vect{\kappa}}\right]$ defined in \eqref{1608241504}
consists of more than one element, namely the class $\mathfrak{S}^{(\vect{n})}(\vect{0})$. Thus, to choose a set of representatives with \eqref{1509301005} we only have to specify a
single element $\vectgamma^{\ast}$ satisfying \eqref{1509301421}. In \cite{DenckerErb2015a} this element is chosen as $\vectgamma^{\ast}=(0, \ldots, 0, n_{\mathsf{d}})$.

The relations between the sets $\vectGammacircvect{m}$, $\overline{\vect{\Gamma}}^{(\vect{n})}_{\vect{\kappa}}$ and $\vect{\Gamma}^{(\vect{n})}_{\vect{\kappa}}$ are given by
\[
\vect{\Gamma}^{(\vect{n})}_{\vect{\kappa}} = \vectGammacircvect{n} \cup \{\vectgamma^{\ast}\}, \qquad 
\overline{\vect{\Gamma}}^{(\vect{n})}_{\vect{\kappa}} = \vectGammacircvect{n} \cup \mathfrak{S}^{(\vect{n})}(\vect{0}). 
\]
In particular, the sets $\overline{\vect{\Gamma}}^{(\vect{n})}_{\vect{\kappa}}$, $\vect{\Gamma}^{(\vect{n})}_{\vect{\kappa}}$ do not depend on the parameter $\vect{\kappa}$. For the relation to the Padua points and further concrete examples, we refer to \cite{DenckerErb2015a}.

\subsection{The Morrow-Patterson-Xu points}
\begin{figure}[htb]
	\centering
	\subfigure[\hspace*{1em} $\LC^{(4,4)}_{(0,0)}$ and $\mathcal{C}^{(4,4)}_{(0,0)} = \displaystyle \bigcup_{k \in \{0,2,4\}} \vect{\ell}^{(4,4)}_{(0,k)}(\mathbb{R})$
	]{\includegraphics[scale=0.8]{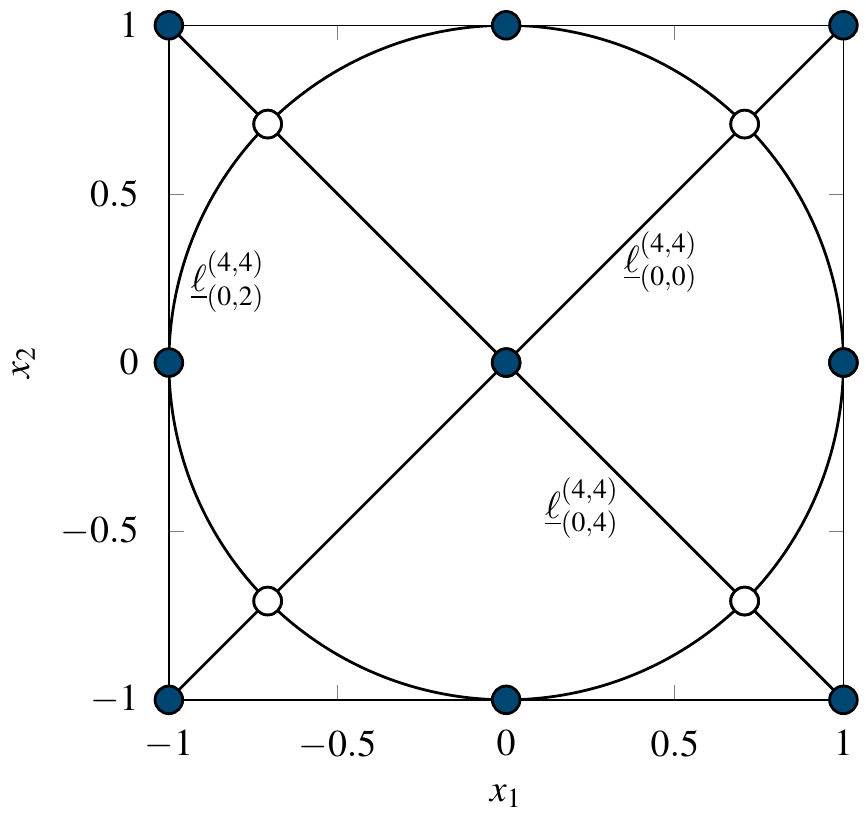}}
	\hfill	
	\subfigure[\hspace*{1em} $\LC^{(5,5)}_{(0,0)}$ and $\mathcal{C}^{(5,5)}_{(0,0)} = \displaystyle \bigcup_{k \in \{0,2,4\}} \vect{\ell}^{(5,5)}_{(0,k)}(\mathbb{R})$
	]{\includegraphics[scale=0.8]{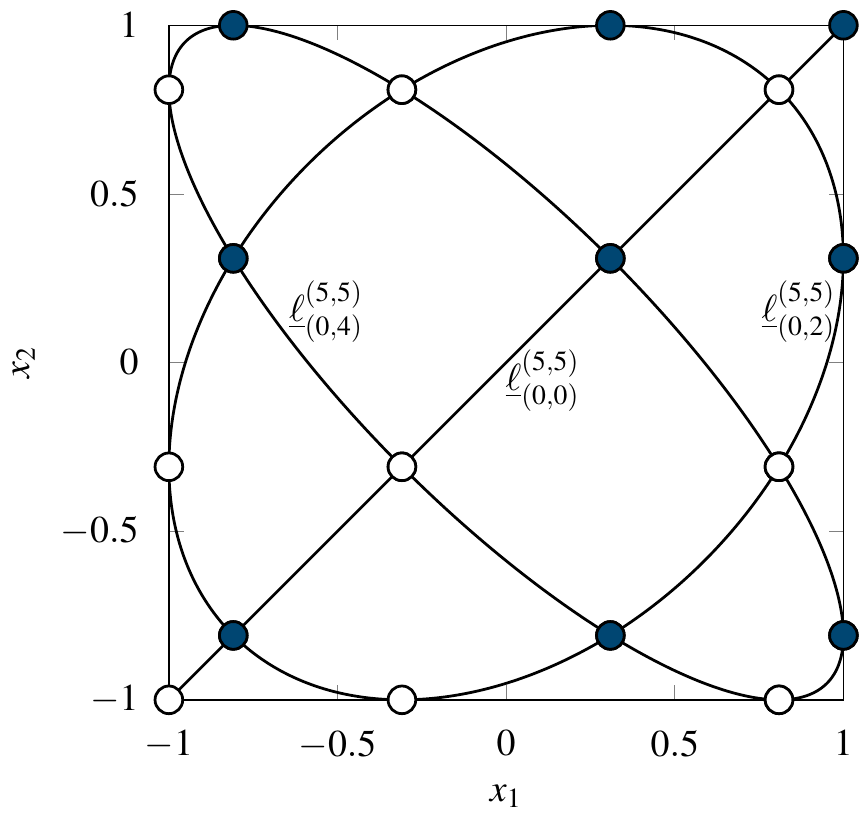}} 
  	\caption{Illustration of the bivariate Morrow-Patterson-Xu points $\LC^{(m,m)}_{(0,0)}$ and the corresponding Chebyshev variety $\mathcal{C}^{(m,m)}_{(0,0)}$.
  	The subsets $\LC^{(m,m)}_{(0,0),0}$ and $\LC^{(m,m)}_{(0,0),1}$ are colored in blue and 
  	white, respectively. 
  	}
	\label{fig:MPX-1}
\end{figure}

 \begin{figure}[htb]
	\centering
	\subfigure[$\overline{\vect{\Gamma}}^{(4,4)}_{(0,0)}$ and $\vect{\Gamma}^{(4,4)}_{(0,0)} \! = \overline{\vect{\Gamma}}^{(4,4)}_{(0,0)} \! \setminus \! \{(3,1), (4,0)\}$
	]{\includegraphics[scale=0.79]{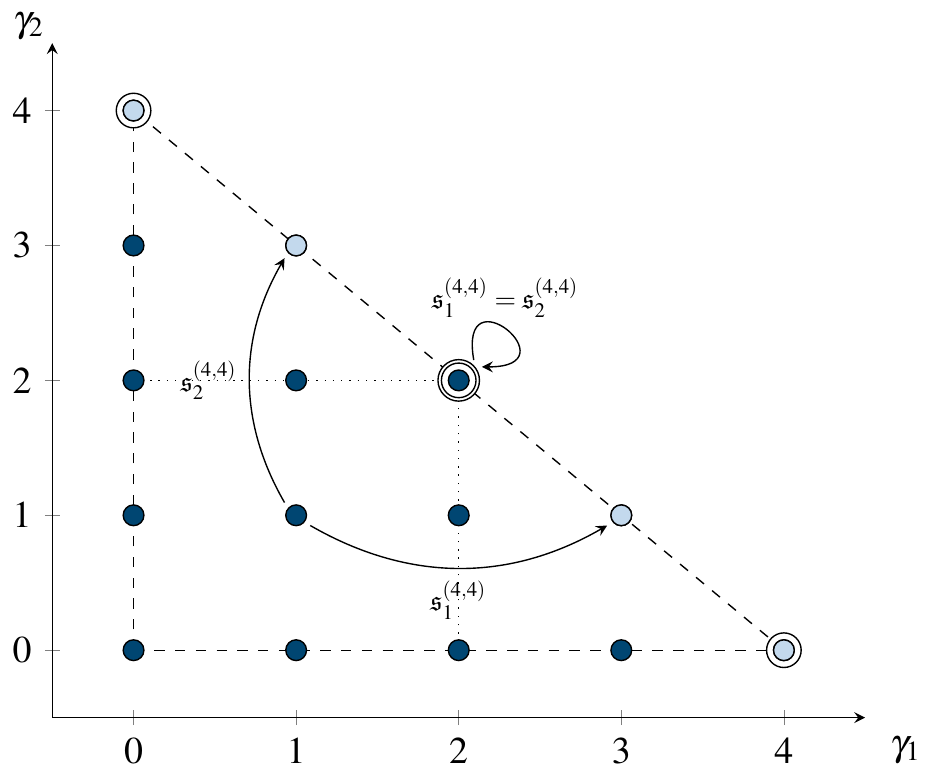}}
	\hfill	
	\subfigure[$\overline{\vect{\Gamma}}^{(5,5)}_{(0,0)}$ and $\vect{\Gamma}^{(5,5)}_{(0,0)} \! = \overline{\vect{\Gamma}}^{(5,5)}_{(0,0)} \! \setminus\! \{(3,2), (4,1), (5,0)\}$
	]{\includegraphics[scale=0.79]{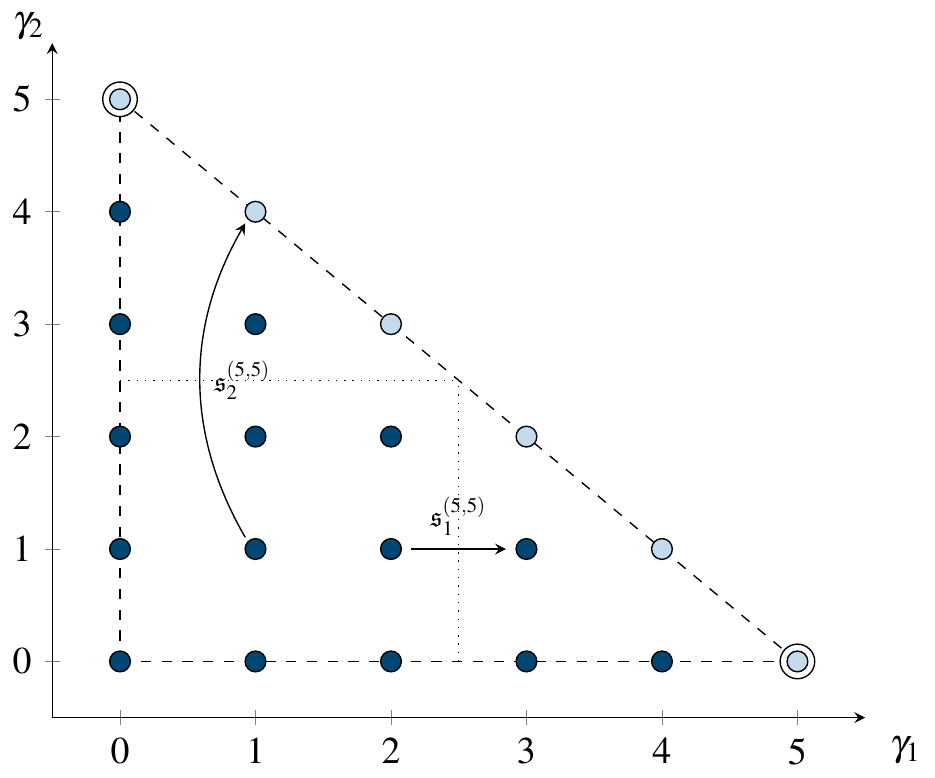}} 
  	\caption{Illustration of the spectral index sets $\overline{\vect{\Gamma}}^{(m,m)}_{(0,0)}$ and the mappings $\mathfrak{s}^{(m,m)}_{1}$ and $\mathfrak{s}^{(m,m)}_{2}$ 
  	for the Morrow-Patterson-Xu points given in Figure \ref{fig:MPX-1}.
  	The elements $\vectgamma$ with $\#[\vectgamma] = 1$ are colored in dark blue, the elements with $\#[\vectgamma] = 2$ in light blue. The
  	dots with one circle correspond to the elements in the class $\mathfrak{S}^{(m,m)}((0,0))$. The double circled dot is the only element 
  	with $\mathfrak{f}^{(\vect{m})}(\vectgamma) = 1$ in formula \eqref{1608311805}. 
  	} \label{fig:MPX-2}
\end{figure}
\begin{figure}[htb]
	\centering
	\subfigure[\hspace*{1em} $\LC^{(4,4,4)}_{(0,0,0)}$ and $\mathcal{C}^{(4,4,4)}_{(0,0,0)}$
	]{\includegraphics[scale=0.85]{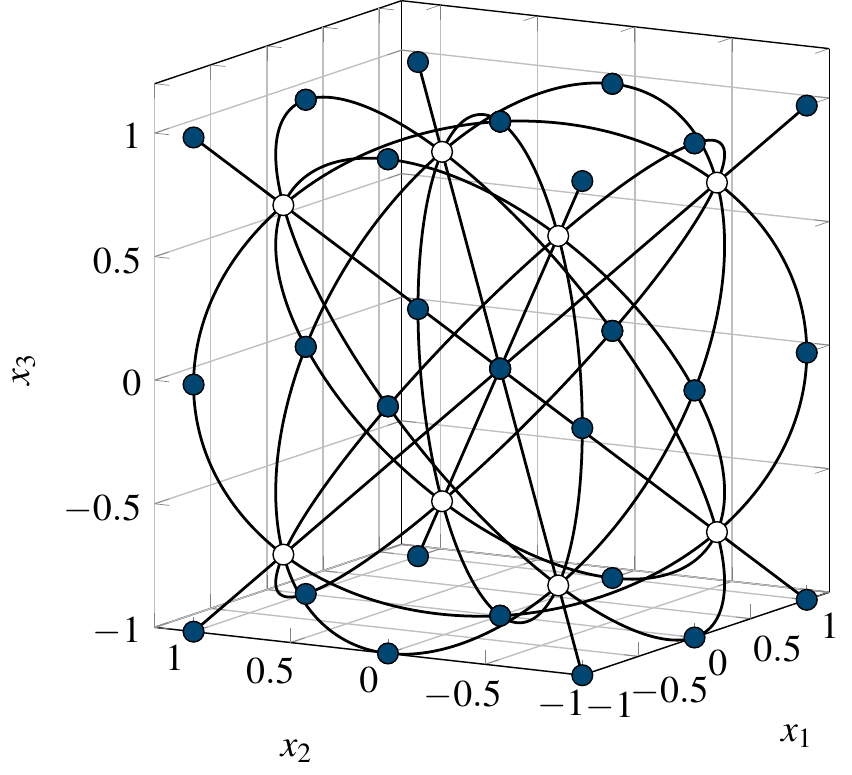}}
	\hfill	
	\subfigure[\hspace*{1em} $\overline{\vect{\Gamma}}^{(4,4,4)}_{(0,0,0)}$
	]{\includegraphics[scale=0.9]{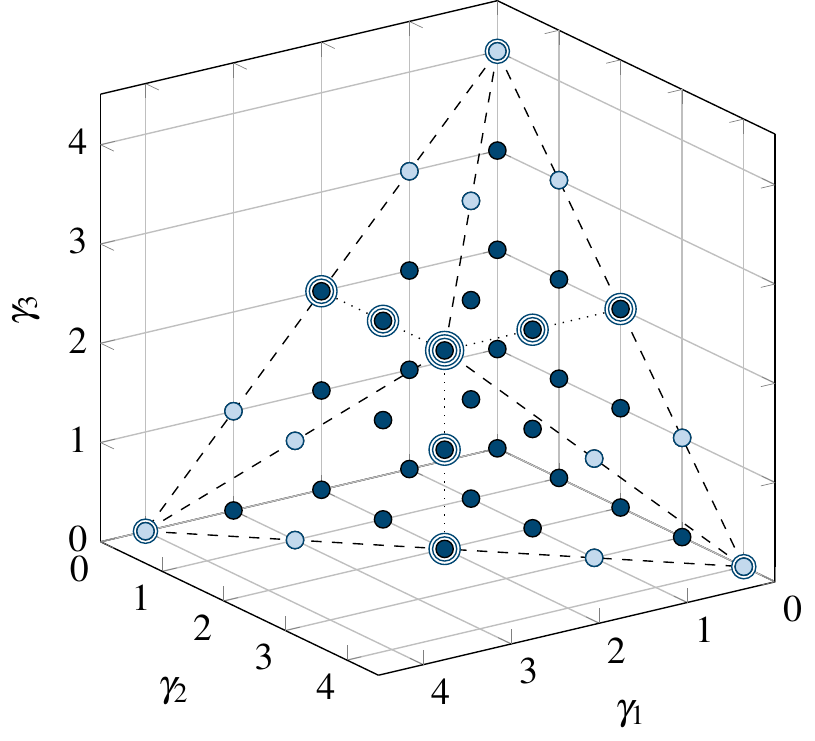}} 
  	\caption{Illustration of the Morrow-Patterson-Xu points $\LC^{(4,4,4)}_{(0,0,0)}$, 
  	the variety $\mathcal{C}^{(4,4,4)}_{(0,0,0)}$ and $\overline{\vect{\Gamma}}^{(4,4,4)}_{(0,0,0)}$.  $\mathcal{C}^{(4,4,4)}_{(0,0,0)}$ consists of the union of $10$ ellipses. 
  	The subsets $\LC^{(4,4,4)}_{\vect{0},0}$ and $\LC^{(4,4,4)}_{\vect{0},1}$ are colored in blue and 
  	white, respectively. On the right hand side, the corners $\vectgamma \in \mathfrak{S}^{(\vect{m})}(\vect{0})$ are marked with an extra ring. For all dots with two and three
  	rings we have $\mathfrak{f}^{(\vect{m})}(\vectgamma) = 1 $ and $\mathfrak{f}^{(\vect{m})}(\vectgamma) = 2$ in formula \eqref{1608311805},
  	respectively. The elements $\vectgamma$ with $\# [\vectgamma] = 1$ are colored in dark blue.
  	} \label{fig:MPX-3}
\end{figure}

\begin{figure}[htb]
	\centering
	\subfigure[\hspace*{1em} $\LC^{(5,4,2)}_{(0,0,1)}$ and $\mathcal{C}^{(5,4,2)}_{(0,0,1)} = \vect{\ell}^{(5,4,2)}_{(0,0,1)}([0,2\pi))$
	]{\includegraphics[scale=0.85]{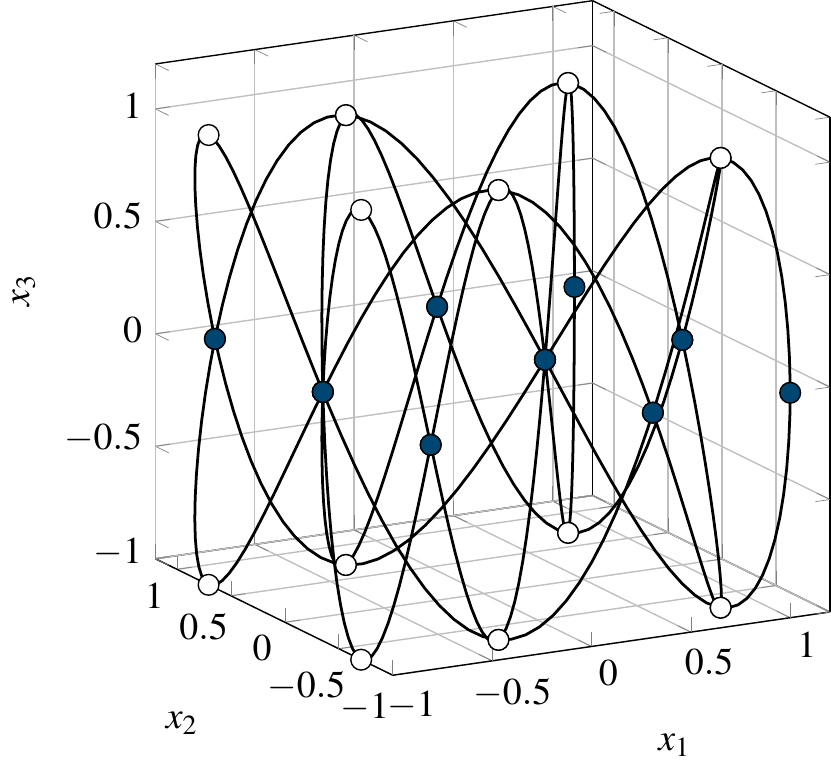}}
	\hfill	
	\subfigure[\hspace*{1em} $\overline{\vect{\Gamma}}^{(5,4,2)}_{(0,0,1)}$
	]{\includegraphics[scale=0.85]{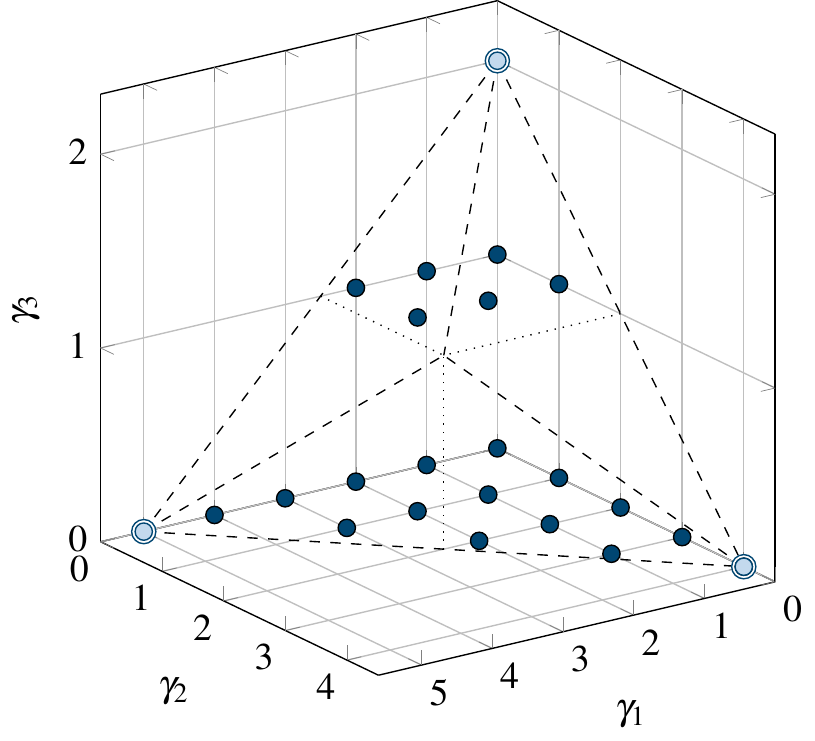}} 
  	\caption{Illustration of the node points $\LC^{(5,4,2)}_{(0,0,1)}$, the non-degenerate Lissajous curve $\vect{\ell}^{(5,4,2)}_{(0,0,1)}(t)$ and the set $\overline{\vect{\Gamma}}^{(4,4,4)}_{(0,0,0)}$. The subsets $\LC^{(5,4,2)}_{(0,0,1),0}$ and $\LC^{(5,4,2)}_{(0,0,1),1}$  are colored in blue and 
  	white, respectively. On the right hand side, the corners $\vectgamma \in \mathfrak{S}^{(\vect{m})}(\vect{0})$ are marked with an extra ring. 
  	The elements $\vectgamma$ with $\# [\vectgamma] = 1$ are colored in dark blue.
  	} \label{fig:LCpoints542}
\end{figure}
 
A particular family of non-tensor product sets introduced by Morrow and Patterson in \cite{MorrowPatterson1978} turned out to be particularly suitable for bivariate quadrature
and was studied by a series of authors. This family was extended in \cite{Xu1996} by Xu and also studied in terms of bivariate Lagrange interpolation \cite{Harris2010,Xu1996}. 
Generalizations of these point sets in three or more dimensions are given in \cite{BojanovPetrova1997,DeMarchiVianelloXu2007,Noskov91}. 

The node sets considered in the works referenced above are special cases of the 
Lissajous-Chebyshev nodes $\LC^{(\vect{m})}_{\vect{\kappa}}$ with the parameter $\vect{m}$ 
given as $\vect{m} = (m, \ldots, m)$ and $m \in \mathbb{N}$. In the following, we call these node sets Morrow-Patterson-Xu points and summarize the results of the previous sections for this particular case. 
A possible component-wise multiplicative decomposition of $\vect{m}$ according to  
       Proposition \ref{1509061252} is  
\[\vect{m}^{\sharp} = (m, 1, \ldots, 1) \quad \text{and} \quad \vect{m}^{\flat} = (1, m, \ldots, m).\]
Then, the respective sets $H^{(\vect{m}^{\sharp})}$ and $R^{(\vect{m}^{\flat})}$ are 
\[H^{(\vect{m}^{\sharp})}=\{0,\ldots,2m-1\} \quad \text{and} \quad
\vect{R}^{(\vect{m}^{\flat})} = \{ 0 \} \times \{0,\ldots,m-1\}^{\mathsf{d}-1}.\]
Further, the number of elements in the sets $\LC^{(\vect{m})}_{\vect{\kappa},\mathfrak{r}}$ is given as
\[ \# \LC^{(\vect{m})}_{\vect{\kappa},\mathfrak{r}} = \dfrac1{2^{\mathsf{d}}} \left\{ 
\begin{array}{cl}
 (m+1)^{\mathrm{d}}  & \text{if $m$ is odd}, \\
 (m+2)^{\# \{ \,\mathsf{i}\,|\,\kappa_{\mathsf{i}}\equiv\mathfrak{r}\tmod 2 \}}\,m^{\# \{ \,\mathsf{i}\,|\,\kappa_{\mathsf{i}}\not\equiv\mathfrak{r}\tmod 2 \}}& \text{if $m$ is even}.
\end{array} \right.
\]
By Theorem \ref{201708201819}, we have 
\[ N_{\mathrm{deg}} = \left\{ 
\begin{array}{cl}
1 & \text{if $m$ is odd}, \\
2^{\mathsf{d}- 1} & \text{if $m$ is even and $\kappa_{\mathsf{i}} \equiv \kappa_{\mathsf{j}} \tmod 2$ for all $\mathsf{i},\mathsf{j}$}, \\
0 & \text{if $m$ is even and $\kappa_{\mathsf{i}} \not\equiv \kappa_{\mathsf{j}} \tmod 2$ for some $\mathsf{i},\mathsf{j}$}.
\end{array} \right.
\]

\vspace*{-2em}

\noindent Furthermore, we have
\[ \#[\vect{\mathfrak{L}}^{(\vect{m}^{\sharp}, \, \vect{m}^{\flat})}_{\vect{\kappa}}] = \left\{ \begin{array}{cl}
(m^{\mathsf{d}-1}+1)/2& \text{if $m$ is odd}, \\
(m^{\mathsf{d}-1}+2^{\mathsf{d}- 1})/2  & \text{if $m$ is even and $\kappa_{\mathsf{i}} \equiv \kappa_{\mathsf{j}} \tmod 2$ for all $\mathsf{i},\mathsf{j}$}, \\
m^{\mathsf{d}-1}/{2}  & \text{if $m$ is even and $\kappa_{\mathsf{i}} \not\equiv \kappa_{\mathsf{j}} \tmod 2$ for some $\mathsf{i},\mathsf{j}$}.
\end{array} \right.
\]

\noindent The curves in $\vect{\mathfrak{L}}^{(\vect{m}^{\sharp}, \, 
\vect{m}^{\flat})}_{\vect{\kappa}}$ are ellipses in $[-1,1]^{\mathsf{d}}$. This is easily seen from the identity
\[\left( \cos \left(t + \dfrac{\kappa_1}{m} \right), \cos \left(t + \dfrac{\kappa_2+2\rho_2}{m}\right), \cdots, \cos \left(t + \dfrac{\kappa_{\mathsf{d}}+2\rho_{\mathsf{d}}}{m}\right) \right) 
= \vect{a}_1 \cos t + \vect{a}_2 \sin t,
\]

\vspace{-1.5em}

\begin{align*}\vect{a}_1 &= \left( \cos \left(\dfrac{\kappa_1}{m} \right), \cos \left(\dfrac{\kappa_2+2\rho_2}{m}\right), \cdots, \cos \left( \dfrac{\kappa_{\mathsf{d}}+2\rho_{\mathsf{d}}}{m}\right) \right), \\
 \vect{a}_2 &= \left( \sin \left(\dfrac{\kappa_1}{m} \right), \sin \left(\dfrac{\kappa_2+2\rho_2}{m}\right), \cdots, \sin \left( \dfrac{\kappa_{\mathsf{d}}+2\rho_{\mathsf{d}}}{m}\right) \right).
\end{align*}
Proposition \ref{1509221521} and Theorem \ref{201708201819} imply that every point $\vect{z}_{\vect{i}}^{(\vect{m})}$ with 
$\vect{i} \in \I^{(\vect{m})}_{\vect{\kappa},\mathsf{M}}$ and $\# \mathsf{M} \geq 2$, 
is the intersection point of precisely $2^{ \# \mathsf{M}-1}$ ellipses 
in $[\vect{\mathfrak{L}}^{(\vect{m}^{\sharp}, \, \vect{m}^{\flat})}_{\vect{\kappa}}]$. If $\# \mathsf{M} = 1$, then $\vect{z}_{\vect{i}}^{(\vect{m})}$ corresponds 
to a boundary point of exactly one of the ellipses in $[\vect{\mathfrak{L}}^{(\vect{m}^{\sharp}, \, \vect{m}^{\flat})}_{\vect{\kappa}}]$ at an edge 
of the hypercube $[-1,1]^\mathsf{d}$. If $\# \mathsf{M} = 0$, then $\vect{z}_{\vect{i}}^{(\vect{m})}$ is a vertex of $[-1,1]^\mathsf{d}$ and an element 
of exactly one of the $N_{\mathrm{deg}}$ degenerate ellipses in $[\vect{\mathfrak{L}}^{(\vect{m}^{\sharp}, \, \vect{m}^{\flat})}_{\vect{\kappa}}]$.

\begin{example} 
 For $\mathsf{d} = 1$ and $m \in \mathbb{N}$, the set $\vect{\mathfrak{L}}^{(m)}_{\kappa}$ consists only of the degenerate Lissajous curve 
 $\vect{\ell}_{\kappa}^{(m)}(t) = \cos t$ satisfying $\vect{\ell}_{\kappa}^{(m)}([0,2\pi)) = [-1,1]$. The points in $\LC^{(m)}_{\kappa} = 
 \left\{\, z^{(m)}_{i}\,\left|\,i \in \{0, \ldots , m\} \right. \right\}$ are, independent of the parameter $\kappa \in \mathbb{N}_0$, the univariate Chebyshev-Gauß-Lobatto points.
 The formulas in \eqref{1509082003}, \eqref{201513121708} correspond to the formulas of univariate Chebyshev-Gauß-Lobatto interpolation in the space of polynomials of 
 degree $m$ on the interval $[-1,1]$, cf. \cite[Section 3.4]{Shen2011}. Theorem \ref{1509082004} describes the respective quadrature rule for polynomials of degree at most $2m-1$. 
 \end{example}
\begin{example} \label{1609031131} 

Let  $\mathsf{d} = 2$ and $\vect{m} = (m,m)$, $m \in \mathbb{N}$.
In this bivariate setting, we have
\begin{align}
\vectGammacircvect{m} &= \left\{\,\left.\vectgamma \in \mathbb{N}_0^2 \ \right| \ \gamma_1 + \gamma_2 < m\,\right\},\nonumber \\
\overline{\vect{\Gamma}}^{(\vect{m})}_{\vect{\kappa}} &= \left\{\,\left.\vectgamma \in \mathbb{N}_0^2 \ \right| \ \gamma_1 + \gamma_2 \leq m, \vectgamma \neq (m/2,m/2) \quad \text{if}\quad \kappa_1 \not\equiv \kappa_2 \tmod 2 \,\right\}, \nonumber\\
\oversim{\Pi}^{(\vect{m})}_{\vect{\kappa}} &= \left\{\,\left.\tsum_{\gamma_1 + \gamma_2 \leq m} c_{\indexvectgamma}\,T_{\indexvectgamma}\ \right| \ c_{(\gamma_1,\gamma_2)} = (-1)^{\kappa_2 - \kappa_1} c_{(\gamma_2,\gamma_1)} \quad \text{if}\quad \gamma_1 + \gamma_2 = m\,\right\}. \label{1708251106}
\end{align}
For $\vect{i}\in\I^{(\vect{m})}_{\vect{\kappa},\mathsf{M}}$, $\mathsf{M}\subseteq\{1,2\}$, the 
interpolation polynomial  $\oversim{L}^{(\vect{m})}_{\vect{\kappa},\vect{i}}$ in \eqref{1608270850} is given as 
\begin{align*} \dfrac{2^{\#\mathsf{M}-1}}{m^2} & 
\left( \tsum_{\indexvectgamma\in \vectGammacircvect{m}} \!\!\! \dfrac{ {T}_{\indexvectgamma}( \vect{z}^{(\vect{m})}_{\vect{i}})}{\|{T}_{\indexvectgamma}\|^2}\,T_{\indexvectgamma} + \dfrac12 
       \tsum_{\substack{\indexvectgamma \in \overline{\vect{\Gamma}}^{(\vect{m})}_{\vect{\kappa}}\\ \gamma_1 + \gamma_2 = m}} \!\!\!\! \dfrac{ {T}_{\indexvectgamma}( \vect{z}^{(\vect{m})}_{\vect{i}})}{\|{T}_{\indexvectgamma}\|^2}T_{\indexvectgamma}  -  \dfrac14 
       \tsum_{\indexvectgamma \in \{(0,m),(m,0)\}} \!\! \dfrac{{T}_{\indexvectgamma}( \vect{z}^{(\vect{m})}_{\vect{i}})}{\|{T}_{\indexvectgamma}\|^2}T_{\indexvectgamma}  \right).
\end{align*}
The number of equivalence classes $[\vect{\mathfrak{L}}^{(\vect{m}^{\sharp}, \, \vect{m}^{\flat})}_{\vect{\kappa}}]$ of the ellipses is given by
 \[ \# [\vect{\mathfrak{L}}^{(\vect{m}^{\sharp}, \, \vect{m}^{\flat})}_{\vect{\kappa}}]  = \left\{ \begin{array}{cl}
(m+1)/2& \text{if $m$ is odd}, \\
(m+2)/2  & \text{if $m$ is even and $\kappa_{\mathsf{1}} \equiv \kappa_{\mathsf{2}} \tmod 2$}, \\
m/{2}  & \text{if $m$ is even and $\kappa_{1} \not\equiv \kappa_{2} \tmod 2$}.
\end{array} \right.
\] 
The point sets $\LC^{(\vect{m})}_{\vect{\kappa}}$ with  $\vect{m} = (2k,2k)$ and $\vect{\kappa} \in \{(0,0),(0,1)\}$ are the ones introduced by 
Morrow and Patterson \cite{MorrowPatterson1978}. The sets $\LC^{(2k,2k)}_{(0,1)}$, $\LC^{(2k+1,2k+1)}_{(0,0)}$ were later on studied by Xu \cite{Xu1996}
in terms of bivariate polynomial interpolation. In \cite{Xu1996}, a variant of Theorem \ref{201512131945} is proven for the space~\eqref{1708251106} with  $\vect{m} = (2k,2k)$ and $\vect{\kappa} \in \{(0,0),(0,1)\}$.

\medskip

\noindent The general statement of Theorem \ref{201512131945} for 
two-dimensional point sets $\LC^{(\vect{m})}_{\vect{\kappa}}$ with $\vect{m} = (m,m)$ is formulated in \cite{Harris2010}. 
In \cite{Harris2013}, this statement is extended to polynomial spaces of Chebyshev polynomials of the second, third and fourth kind. Using the zeros of Jacobi polynomials instead
of the zeros of Chebyshev polynomials a corresponding extension can be found in \cite{Xu2012}.
Two concrete examples of bivariate Morrow-Patterson-Xu points and the respective sets $\vect{\Gamma}^{(m,m)}_{(0,0)}$ 
are given in Figure \ref{fig:MPX-1} and Figure \ref{fig:MPX-2}.
\end{example}
\begin{example}

For $\mathsf{d} \geq 3$, different variants of the point sets $\LC^{(\vect{m})}_{\vect{\kappa}}$, $\vect{m} = (m, \ldots, m)$ are studied in
\cite{BojanovPetrova1997,DeMarchiVianelloXu2007,Noskov91} for multivariate cubature. 
 For $\mathsf{d} \geq 3$, the exact interpolation statements of Theorem \ref{1509082002} and 
Theorem \ref{201512131945}  are novel in this work. An explicit example of trivariate Morrow-Patterson-Xu nodes is illustrated
in Figure \ref{fig:MPX-3}.
\end{example}

\subsection{Node points of single non-degenerate Lissajous curves}

For the class $\mathfrak{S}^{(\vect{m})}(\vect{0})$, we generally have $\#\mathfrak{S}^{(\vect{m})}(\vect{0})=\mathsf{d}$.  In Proposition \ref{201512151534}, for $\mathsf{d}\geq 2$ we 
characterized all setups in which all other classes consist of precisely one element. The given condition~i) in Proposition \ref{201512151534} can be stated as follows: there exist $\vect{\epsilon}\in \{1,2\}^{\mathsf{d}}$ 
and $\vect{n}\in\mathbb{N}^{\mathsf{d}}$ satisfying \eqref{16009031509} such that $m_{\mathsf{i}} = \epsilon_{\mathsf{i}} n_{\mathsf{i}}$ 
for all $\mathsf{i}\in\{1,\ldots,\mathsf{d}\}$.  In \cite{DenckerErb2015a}, the cases $\vect{\epsilon}\in \{\vect{1},\vect{2}\}$ are discussed in more detail. 

For $\mathsf{d} \geq 2$, we consider now conditions that ensure 
the node points to be generated by a single non-degenerate Lissajous curve. 
For $\mathsf{d} = 2$, such non-degenerate Lissajous curves and its node points were first studied in \cite{ErbKaethnerAhlborgBuzug2015}.

We consider the statements of Theorem \ref{201708201819}. If $\#[\vect{\mathfrak{L}}^{(\vect{m}^{\sharp}, \, \vect{m}^{\flat})}_{\vect{\kappa}}]$ is supposed to be one and $N_{\mathrm{deg}} = 0$, then $\p[\vect{m}^{\flat}]=2$. Thus, we are in the above mentioned setup of Proposition \ref{201512151534}  and 
there are $\mathsf{g},\mathsf{g}' \in \{1, \ldots, \mathsf{d} \}$ with $\mathsf{g} \neq \mathsf{g}'$ such that  $n_{\mathsf{g}}$ is even and $n_{\mathsf{i}}$ is odd for all $\mathsf{i} \neq \mathsf{g}$ and $\epsilon_{\mathsf{g}'} = 2$ and $\epsilon_{\mathsf{i}} = 1$ for all $\mathsf{i} \neq \mathsf{g}'$.
Moreover, $N_{\mathrm{deg}} = 0$ is only possible if $\kappa_{\mathsf{g}'} \not\equiv \kappa_{\mathsf{g}} \tmod 2$ is satisfied. 

On the other hand, if  $n_{\mathsf{g}}$ is even for some $\mathsf{g} $, $n_{\mathsf{i}}$ is odd for all $\mathsf{i} \neq \mathsf{g}$, 
$\epsilon_{\mathsf{g}'} = 2$ for some $\mathsf{g}' \neq\mathsf{g}$, $\epsilon_{\mathsf{i}} = 1$ for all $\mathsf{i} \neq \mathsf{g}'$, and  $\kappa_{\mathsf{g}'} \not\equiv \kappa_{\mathsf{g}} \tmod 2$, then, using $m_{\mathsf{i}} = \epsilon_{\mathsf{i}} n_{\mathsf{i}}$, we have $\# [\vect{\mathfrak{L}}^{(\vect{m}^{\sharp}, \, \vect{m}^{\flat})}_{\vect{\kappa}}]=1$ and $N_{\mathrm{deg}} = 0$.
Furthermore, in this case  we easily compute
\[ \# \LC^{(\vect{m})}_{\vect{\kappa}} = \# \I^{(\vect{m})}_{\vect{\kappa}} = \# \vect{\Gamma}^{(\vect{m})}_{\vect{\kappa}} = 
\dfrac{1}{2^{\mathsf{d}-1}} \left( \p[\vect{m}+\vect{1}] - \tprod_{\mathsf{i} \notin \{\mathsf{g},\mathsf{g}'\}}(n_{\mathsf{i}}+1) \right),
\]
where the product over the empty set is considered to be $1$ if $\mathsf{d}=2$. By construction, 
the set $\vect{\mathfrak{L}}^{(\vect{m}^{\sharp}, \, \vect{m}^{\flat})}_{\vect{\kappa}}$ contains 
the non-degenerate Lissajous curves $\vect{\ell}^{(\vect{m})}_{\vect{\kappa}}$ and $\vect{\ell}^{(\vect{m})}_{\vect{\xi}}$ with 
$\vect{\xi} = (\kappa_1, \ldots ,\kappa_{\mathsf{g}'-1}, \kappa_{\mathsf{g}'} + 2 n_{\mathsf{g}'}, \kappa_{\mathsf{g}'+1}, \ldots, \kappa_{\mathsf{d}})$.
Clearly, $\vect{\ell}^{(\vect{m})}_{\vect{\xi}}([0,2\pi)) = \vect{\ell}^{(\vect{m})}_{\vect{\kappa}}([0,2\pi))$.
For the sets $\overline{\vect{\Gamma}}^{(\vect{m})}_{\vect{\kappa}}$ and $\vect{\Gamma}^{(\vect{m})}_{\vect{\kappa}}$, a stronger
statement than in Proposition \ref{201512151534} can be made.
We obtain the simple decomposition
\[
\vect{\Gamma}^{(\vect{m})}_{\vect{\kappa}} = \vectGammacircvect{m} \cup \{(0, \ldots, 0, m_{\mathsf{d}})\}, \qquad 
\overline{\vect{\Gamma}}^{(\vect{m})}_{\vect{\kappa}} = \vectGammacircvect{m} \cup \mathfrak{S}^{(\vect{m})}(\vect{0}). 
\]
An explicit trivariate example is illustrated in Figure \ref{fig:LCpoints542}.



\end{document}